\documentclass[11pt]{amsart}

\usepackage[margin=2cm]{geometry}

\usepackage{palatino}
\usepackage{latexsym, amssymb, amsfonts, amsmath, amsthm, graphics, graphicx, subfigure, bbm, stmaryrd}
\DeclareGraphicsRule{.tif}{png}{.png}{`convert #1 `basename #1 .tif`.png}
\usepackage[pdftex,colorlinks,linkcolor=blue,menucolor=red,backref=false,bookmarks=true]{hyperref}

\newcommand{\field}[1]{\mathbb{#1}}
\newcommand{\R}{\field{R}}

\newcommand{\C}{\field{C}}
\newcommand{\Z}{\field{Z}}
\newcommand{\p}{\field{P}}

\newcommand{\T}{\mathbf{T}}

\newcommand{\Tr}{\mathrm{Tr}}

\newcommand{\Var}{\mathrm{Var}}
\newcommand{\supp}{\mathrm{supp}}
\newtheorem{theorem}{Theorem}
\newtheorem{conjecture}{Conjecture}

\newtheorem{proposition}{Proposition}
\newtheorem{definition}{Definition}
\newtheorem{lemma}{Lemma}
\newtheorem{corollary}{Corollary}
\newtheorem{remark}{Remark}

\numberwithin{equation}{section}

\begin{document}
\title{Free Functional Inequalities on the Circle}
\author{Ionel Popescu} 
\address{School of Mathematics\\
  Georgia Institute of Technology\\
  Atlanta, GA, 30332 \\
  IMAR\\ 
21 Calea Grivitei Street, 010702 Bucharest, Romania\\
FMI \\
Str. Academiei nr.14, sector 1, C.P. 010014, Bucharest, Romania}
\email[Ionel Popescu]{ipopescu@math.gatech.edu}

\thanks{This work was partially supported by Simon Collaboration grant no. 318929 and UEFISCDI grant PN-III-P4-ID-PCE-2016-0372}
\date{}

\begin{abstract} In this paper we deal with free functional inequalities on the circle.  There are some interesting changes from their classical counterparts.  For example, the free Poincar\'e inequality has a slight change which  seems to account for the lack of invariance under rotations of the base measure.  Another instance is the modified Wasserstein distance on the circle which provides the tools for analyzing transportation, Log-Sobolev, and HWI inequalities.   

These new phenomena also indicate that they have classical counterparts, which does not seem to have been investigated thus far. 
\end{abstract}

\maketitle
\section{Introduction}

An intensive area of research nowadays is functional inequalities.  In the classical case these reflect various aspects of operator theory, mass transport, concentration of measure, isoperimetry and analysis of Markov processes.  

To describe the setup, we start with a Riemannian manifold $M$ and a  probability measure $\nu$ on it.   The classical transportation cost inequality states that for some $\rho >0$ and any other choice of probability measure $\mu$ on $M$ the following holds
\[\tag{$T(\rho)$}
\rho \, W_{2}^{2}(\mu,\nu)\le E(\mu|\nu).
\]
Here $W_{2}(\mu,\nu)$ denotes the Wasserstein distance between $\mu $ and $\nu$ (probability measures of finite second moment) given by 
\[
W_{2}(\mu,\nu)=\inf_{\pi\in\Pi(\mu,\nu)}\left( \iint d(x,y)^{2}\pi(dx,dy) \right) ^{1/2}
\]
where $\Pi(\mu,\nu)$ stands for the set of probability measures on $M\times M$ with marginals $\mu$ and $\nu$ and $d(x,y)$ is the geodesic distance between the points $x,y$.  Also we use here 
\[
E(\mu|\nu)=\int \log\frac{d\mu}{d\nu}\, d\mu
\] 
for the relative entropy of $\mu$ with respect to $\nu$ if $\mu<<\nu$ and $+\infty$ otherwise.  

The classical Log-Sobolev inequality states that for any $\mu$
\[\tag{$LSI(\rho)$}
E(\mu|\nu)\le \frac{1}{2\rho} \, I(\mu|\nu)
\]
where
\[
I(\mu|\nu)=\int\Big|\nabla \log \frac{d\mu}{d\nu} \Big|^{2}d\mu
\]
is the Fisher information of $\mu $ with respect to $\nu$ defined in the case $\mu<<\nu$ and $\frac{d\mu}{d\nu}$ being differentiable.

Another more refined inequality is the HWI inequality introduced in \cite{OV} which states that for any choice of the measure $\mu$,
\[\tag{$HWI(\rho)$}
E(\mu|\nu)\le \sqrt{I(\mu|\nu)} \, W_{2}(\mu,\nu)-\frac{\rho}{2} \,  W_{2}^{2}(\mu,\nu).
\]
Finally, the classical Poincar\'e inequality states that for any compactly supported and smooth function $\psi$
on $M$,  
\[\tag{$P(\rho)$} 
\rho \, \Var_{\nu}(\psi)\le \int |\nabla \psi|^{2}\nu (dx)
\]
where $ \Var_{\nu}(\psi) = \int \psi^2(x) \nu (dx) - \big ( \int \psi (x) \nu (dx) )\big )^2$ is the variance
of $\psi $ with respect to $\nu$.

It is known that if a measure $\nu$ on $M$ has the form 
\[
\nu(dx)=e^{-V(x)}dx,
\]
where $V:M\to\R$ satisfies the famous Bakry-\'Emery condition 
\[
\mathrm{Hess}(V)+Ric\ge \rho,
\]
then, $T(\rho)$, $LSI(\rho)$, $HWI(\rho)$ and $P(\rho)$ hold true (see for a sample of the literature \cite{OV}, \cite{Bakry}, \cite{L2}, \cite{Villani2}, \cite{Wang},\cite{BL2}, \cite{Gozlan}, or \cite{Baud2} in sub-Riemannian framework).  The Gaussian model is the case of $V(x)=|x|^{2}/2+C$ on $\R^{d}$ and the Log-Sobolev in this case is due to \cite{Gross}, while the transportation was first introduced in \cite{Tal}. 

The bridge from classical to free probability is usually done using random matrices (see for instance Voiculescu's work \cite{MR1094052}, the monograph \cite{MR1217253} and also \cite{Biane2,HPU2,HPU1}).  It was shown by Ben Arous and Guionnet in \cite{BAG1} that  one can realize the free entropy as the rate function of the large deviations for the distribution of eigenvalues of $n\times n$ complex random matrix ensembles (see also \cite{J}).   To explain the main ideas, let $V:\R\to\R$ be a smooth function with enough growth at infinity and define the probability distribution 
\[
\p_{n}(dM)=\frac{1}{Z_{n}} \, e^{-n\mathrm{Tr}_{n}(V(M))}dM
\]
on the set $\mathcal{H}_{n}$ of  complex Hermitian $n\times n$ matrices where $dM$ is the Lebesgue measure on $\mathcal{H}_{n}$.  For a matrix $M$, set $\mu_{n}(M)= \frac{1}{n}\sum_{k=1}^{n}\delta_{\lambda_{k}(M)}$  to be the distribution of eigenvalues of $M$.  These are random variables with values in $\mathcal{P}(\R)$, the set of probability measures on $\R$ and we know that they converge almost surely to a non-random measure $\mu_{V}$ on $\R$.   For a measure $\mu$ on $\R$, the logarithmic energy with external field $V$ is defined by
\[
E(\mu)=\int V(x)\mu(dx)-\iint \log|x-y| \, \mu(dx)\mu(dy).
\]
The minimizer of $E(\mu)$ over all probability measures on $\R$ is called the equilibrium measure $\mu_{V}$.  From \cite{BAG1} we learn that the distributions of $\{\mu_{n}\}_{n\ge1}$ under $\p_{n}$ satisfy a large deviations principle with scaling $n^{2}$ and rate function given by 
\[
R(\mu)=E(\mu)-E(\mu_{V}).
\]
In the case of $V(x) = x^2/2$, we have the standard Gaussian Unitary Ensemble in random matrix theory which gives rise to the semicircular law as equilibrium measure.  

Assume that $V''\ge2\rho$.  Using the classical inequalities on large random matrices, for convex potentials, one can prove the analog of the transportation inequality (due initially to Biane and Voiculescu in \cite{BV} and then \cite{HPU1}) and Log-Sobolev (see \cite{Biane2}).  A different method, still partially based on random matrices to prove a Brunn-Minkowski and from this to deduce the free transportation and Log-Sobolev is given by Ledoux in \cite{L}.   Another  more direct approach is based on a method involving the mass transportation tools which are presented in \cite{IP} and \cite{LP}.  To the point, the free transportation inequality states that for  any probability measure on $\R$, 
 \begin{equation}\label{e:i2}
 \rho \, W_{2}^{2}(\mu,\mu_{V})\le E(\mu)-E(\mu_{V}).
 \end{equation}
  The free Log-Sobolev asserts that 
 \begin{equation}\label{e:i1}
 E(\mu)-E(\mu_{V})\le \frac{1}{4\rho} \, I(\mu)
\end{equation}
for any probability measure $\mu$ on $\R$ whose density with respect to the Lebegue measure is in $L^{3}(\R)$, with the notation
\begin{equation}\label{ei:1I}
I(\mu)=\int\big (H\mu(x)-V'(x) \big )^{2}\mu(dx).
\end{equation}
Here $H\mu= 2\int \frac{1}{x-y}\mu(dx)$ stands for the Hilbert transform of $\mu$.   

The free Poincar\'e inequality was introduced in \cite{LP} and later carefully analyzed in \cite{LP2}.  It states that if the equilibrium measure $\mu_{V}$ of $V$ has support $[-2,2]$, then, 
\begin{equation}\label{e:i4}
 \frac{2\rho}{\pi^{2}}\int_{-2}^{2}\int_{-2}^{2}\left(\frac{\phi(x)-\phi(y)}{x-y}\right)^{2}\frac{4-xy}{\sqrt{4-x^{2}}\sqrt{4-y^{2}}} \, dxdy\le \int \phi'(x)^{2}\mu_{V}(dx),  
 \end{equation}
for any smooth function $\phi$ on the interval $[-2,2]$.     This is further refined in \cite{IP4} for the case of the quadratic potential $V(x)=x^{2}/2$.

The purpose of this paper is to expand the study of free functional inequalities to the circle.  Previously,  such inequalities were first introduced by Hiai, Petz and Ueda in \cite{HPU1}.  Particularly, the transportation and Log-Sobolev inequalities on the circle were proved using unitary random matrix ensembles.    We should point out a key fact which will play an important role in the sequel.   In the random matrix approach from \cite{HPU1}, the key idea is to look at the Haar measure on $U(n)$, the space of unitary matrices.  The main issue with this is that $U(n)$ has Ricci curvature constant in all directions,  except one direction in which it is $0$.  Therefore, the Bakry-\'Emery condition which is typically used to get functional inequalities does not hold in this case.  The fix is to actually look at the subgroup of matrices with determinant one.  This has the Ricci curvature constant and thus, we can apply the classical inequalities, which in the limit provides the free analogs.  This phenomena produces various corrections as we will discuss below.  

Our approach is not based on the random matrix approach, but rather on direct methods based on mass transport tools.  We recapture the free transportation and Log-Sobolev alongside with HWI and Brunn-Minkowski.  We also  introduce the free Poincar\'e inequality.  

We start the discussion on our approach introducing first the free Poincar\'e inequality (which to our knowledge is new) similar to \eqref{e:i4}.  If $\mu$ is a probability on the unit circle, we say it satisfies a free Poincar\'e inequality with constant $\rho>0$ if 
\begin{equation}\label{e:i1c}
 2\rho \iint\left| \frac{f(z)-f(w)}{z-w}  \right|^{2}\, \alpha(dz)\,\alpha(dw)\le \Var_{\mu}(f'):= \int |f'|^{2}d\mu-\left| \int f'\,d\mu\right|^2
\end{equation}
holds for any smooth function $f$ on the circle.  Here we denote by $\alpha$ the Haar measure on the circle.  Notice that there are similarities and also differences to the classical analog and the free version \eqref{e:i4}.  A difference from the classical counterpart is due to the fact that the left hand sides of \eqref{e:i4} and \eqref{e:i1c} are independent of the measure $\mu$. From a random matrix perspective the fact that the left hand side of \eqref{e:i4} and \eqref{e:i1c} do not depend on the measure $\mu$ is a reflection of universality in random matrix theory for the fluctuations (see the argument in \cite{LP} where \eqref{e:i4} is introduced). 

The other particularity of \eqref{e:i1c} is that as opposed to the \eqref{e:i4} and the classical Poincar\'e inequality, the right hand side has an extra term, namely $\left| \int f'\,d\mu\right|^2$.  In the case of $\mu=\alpha$, $\int f'd\alpha=0$ for any smooth function, thus the inequality, above takes the form 
\[
\iint\left| \frac{f(z)-f(w)}{z-w}  \right|^{2}\, \alpha(dz)\,\alpha(dw)\le  \int |f'|^{2}d\mu
\]
which has the same flavor as \eqref{e:i4}.  It is worth pointing out that the Haar measure $\alpha$ is the unique measure on the circle which is invariant under rotations and thus the unique measure $\mu$ with the property that $\int f' d\mu=0$ for any smooth function on the circle.   Therefore, we can interpret the right hand side term $\int f'd\mu$ as a measure of lack of invariance under rotation for $\mu$.   This indicates that the free Poincar\'e inequality unveils an interesting phenomena.   

Furthermore, given a potential $Q$ on the circle, we define $\mu_Q$ to be the measure which minimizes
\[
E_Q(\mu)=\int Q d\mu -\iint \log|x-y|\mu(dx)\mu(dy)
\]
over all probability measures on the circle.   In the case $Q\equiv 0$, the equilibrium measure $\mu_{Q}$  is the Haar measure $\alpha$.    We show that under the convexity assumption $Q''\ge\rho-1/2$, the equilibrium measure $\mu_{Q}$ satisfies  \eqref{e:i1c}. 

In general, the free transportation inequality with constant $\rho>0$ (which we call $T(\rho)$) associated to the potential $Q$ states that for any probability measure $\mu$ on $\T$ the following holds true
\begin{equation}\label{e:i2c}
\rho W_2^{2}(\mu,\mu_Q)\le E_{Q}(\mu)-E_{Q}(\mu_{Q}).
\end{equation}
Under the assumption $Q''\ge\rho-1/2$, the transportation inequality was introduced in \cite{HPU1} using random matrix approximations is $T(\rho/2)$.  In this paper we strengthen this inequality to the following
\begin{equation}\label{e:i2c2}
\frac{\rho}{2} \mathcal{W}_2^{2}(\mu,\mu_Q)\le E_{Q}(\mu)-E_{Q}(\mu_{Q}),
\end{equation}
where $\mathcal{W}_{2}$ is a larger quantity than the standard Wasserstein distance.  The new distance here is defined as 
\[
\mathcal{W}_{2}(\mu,\nu)=\sup\left\{ W_{2}(\bar{\mu},\bar{\nu}): \text{ such that } \int x\,\bar{\mu}(dx)=\int x\,\bar{\nu}(dx)\right\},
\]
where the supremum is taken over all choices of $u,v\in[0,2\pi)$ and measures $\bar{\mu}$ on $[u,u+2\pi)$ and $\bar{\nu}$ on $[v,v+2\pi)$ such that $\exp_{\#}\bar{\mu}=\mu$ and $\exp_{\#}\bar{\nu}=\nu$ with $\exp(t)=e^{it}$ for $t\in\R$.   This is discussed in Section \ref{S:7}.   The main comment is that this version of the Wasserstein distance reveals, to some extent, a phenomena which is similar to the one of Poincar\'e inequality, namely that the circle acts on itself by rotations.  Several of the standard transportation techniques can be carried out for this new distance.   These results are enough to show the transportation inequality \eqref{e:i2c2}.  Worth noticing is the fact that even in the case $Q\equiv 0$ (when we have $\mu_{Q}=\alpha$) the inequality \eqref{e:i2c2} is not sharp (see Proposition~\ref{p:6}).  We conjecture though  that the right coefficient in front of \eqref{e:i2c2} is $\rho$ not $\rho/2$.

Furthermore, we introduce and analyze the Log-Sobolev inequality $LSI(\rho)$ in the form (introduced in \cite{HPU1})
\begin{equation}\label{e:i3}
E_{Q}(\mu)-E_{Q}(\mu_{Q})\le \frac{1}{4\rho}I_{Q}(\mu)
\end{equation}
with 
\[
I_{Q}(\mu):=\int(H\mu-Q')^{2}d\mu-\left(\int Q'd\mu\right)^{2}\quad\text{ and }\quad H\mu(x)=-p.v. \int \frac{z+w}{z-w}\mu(dw).
\]
Again under convexity assumption $Q''\ge\rho-1/2$, we prove $LSI(\rho/2)$ holds true using the transportation tools developed for the new distance.  As in the case of transportation inequality, $LSI(\rho/2)$ is not sharp and  we conjecture that $LSI(\rho)$ should be the optimal version, at least in the case $Q\equiv0$.  

Another observation is that we see the same phenomena as in the Poincar\'e inequality, namely there is a correction in $I_{Q}(\mu)$ in comparison to \eqref{ei:1I} which has an extra term.  This seems to vanish for all measures $\mu$ if and only if $Q\equiv 0$, which is the case of $\mu_{Q}=\alpha$, the Haar measure on $\T$. 

Under the same convexity conditions, we also prove the HWI inequality which claims that
\begin{equation}\label{e:i4H}
E_{Q}(\mu)-E_{Q}(\mu_{Q})\le \sqrt{I_{Q}(\mu)}\mathcal{W}_{2}(\mu,\mu_{Q})-\frac{\rho}{2} \mathcal{W}_{2}(\mu,\mu_{Q})^{2}.
\end{equation}
It seems very difficult to get this inequality using random matrix approximations.  Again this is probably not sharp as the constant $\rho/2$ should be $\rho$, though we do not have an argument for it.  We also do not know if this holds for the standard Wasserstein distance $W_{2}$ instead of $\mathcal{W}_{2}$.    

Another result which follows very easily in this framework is the Brunn-Minkowski's inequality and is discussed in Section~\ref{S:8}.   

Another part of this paper deals with the relations between these inequalities which parallel the hierarchy of inequalities in the classical case from \cite{OV}.   

Although the convexity of the potential is important in deducing the inequalities thus far, we also treat inequalities which are potential independent.  This was first introduced in \cite{Maida} and then also discussed in \cite{IP3}.   The main transportation inequality uses the $\mathcal{W}_{1}$ distance instead of $\mathcal{W}_{2}$.   

One aspect which transpires in the free case is the particular structure of the circle, and perhaps more precisely the fact that the circle acts on itself by rotations.  This perspective can be taken back to the classical case where it seems very natural to ask about refinements of classical inequalities in the case of manifolds with an action of a Lie group.   Though we posed this question to several experts in the area, we have not been pointed to any reference in the literature.   We suggest a version of Poincar\'e inequality in this framework, though the counterpart of the Wasserstein distance is probably more intricate.   

The paper is organized as follows.  Section~\ref{S:2} contains the preliminaries.  In Section~\ref{S:3} we discuss the main results around the minimization of the logarithmic potentials and introduce a number of operators which play an important role in the rest of the paper. Following this we introduce the free Poincar\'e inequality in Section~\ref{S:4} which is then expanded in Sections~\ref{S:5} and~\ref{S:6}.  In Section~\ref{S:7} we discuss the Wasserstein distance followed by Section~\ref{S:8} which introduces the free transportation, Log-Sobolev, HWI, and Brunn-Minkowski.  The next section, namely Section~\ref{S:9}, discusses the hierarchy  between these inequalities.   Section~\ref{S:10} gives the cases of potential independent versions of the inequalities in discussion.   The last section is a return to the classical case which asks some natural questions about the functional inequalities under the presence of a Lie group acting on the underlying space.

\section{Preliminaries}\label{S:2}   

In this section we introduce some basic notions and notations.  

For any complex number $z\in\C$, $\Re(z)$ and $\Im(z)$ will stand for the real, respectively imaginary parts.  

The normalized Haar measure on $\T$  will be denoted by $\alpha$ and is given formally as the unique measure of mass $1$ which is invariant under rotations on $\T$.   A more concrete way of computing with it goes through   
$\int f(z)\alpha(dz)=\frac{1}{2\pi}\int_{-\pi}^{\pi} f(e^{ix})dx$ for any continuous function $f:\T\to\C$.

The inner product in $L^{2}(\mu)$ is denoted by $\langle\cdot,\cdot \rangle_{\mu}$ and if no measure is specified, this inner product is taken with respect to the Haar measure $\alpha$.  We also use the notation $L^{2}_{0}(\mu)$ for the space of functions with mean $0$ with respect to the measure $\mu$.   

For a smooth function on $\T$ we define 
\[
\phi'(z)=\frac{d}{dt}\big|_{t=0}\phi(ze^{it})
\]
with the obvious extension to the higher derivatives.  

The rotation invariance property of $\alpha$ is equivalent to the fact that for any $C^{1}$ function $\phi:\T\to\C$, 
\begin{equation}\label{e:p:0}
\int \phi'\,d\alpha=0.
\end{equation}
In particular for $\phi(z)=z^{n}$, $n\in\Z$, $n\ne0$, $\phi'(z)=niz^{n}$ and thus, 
\begin{equation}\label{e:p:1}
\int z^{n}\,d\alpha=0.
\end{equation}
This in turn yields that $\{z^{n}\}_{n\in \Z}$ forms an orthonormal basis in $L^{2}(\alpha)$.

We say that a measure $\mu$ on $T$ is \emph{smooth} if it has a smooth density with respect to the Haar measure $\alpha$.

A \emph{potential} on $\T$ is simply a function $Q:\T\to\R$ which we assume to be at least  continuous.  
For a probability measure $\mu$ on $\T$, the \emph{logarithmic energy with external field $Q$} is given by 
\begin{equation}\label{ep:lp}
E_{Q}(\mu)=\int Q\,d\mu-\iint \log|x-y|\mu(dx)\mu(dy).
\end{equation}
It is known that given a continuous $Q:\T\to\R$, (see \cite{ST})  there is a unique minimizer $\mu_{Q}$ on the set of probability measures on $\T$.  We will denote for simplicity $E_{Q}=E_{Q}(\mu_{Q})$.    In what follows, we denote the support of the measure $\mu$ by $\supp{\mu}$.

The variational characterization of the equilibrium measure $\mu_{Q}$ of \eqref{ep:lp} on the set $\T$ (cf.  \cite[Thm.I.1.3]{ST}) is 
\begin{equation}\label{ep:var}
\begin{split}
Q(x)&\ge 2\int \log|x-y|\mu(dy)+C \quad\text{quasi-everywhere on }\T \\ 
Q(x)&= 2\int \log|x-y|\mu(dy)+C 
\quad\text{quasi-everywhere on}\:\:\supp{\mu}. \\ 
\end{split}
\end{equation} 
This means, in particular that the equality on $\supp \mu$ is actually 
almost surely with respect to any measure of finite logarithmic energy.  

As we will see below in Corollary~\ref{c:1},  the equilibrium measure on $\T$ with no potential ($Q\equiv0$) is simply the Haar measure $\alpha$.

Very often we will move measures and functions from $\T$ to the interval $[-\pi,\pi)$ or $[0,2\pi)$ and we will do this without specifying it all the time.  As a piece of notation, we will fix along the discussion the potential $Q$ on $\T$ and set $V(x)=Q(e^{ix})$.  With this mapping, the characterization of the probability measure $\mu_{Q}$ becomes the characterization of the probability measure $\mu_{V}$ on $[-\pi,\pi)$ which is the minimizer of the energy 
\begin{equation}\label{e1:01}
E_{V}(\nu)= \int V(x)\nu(dx)-\iint\log|e^{ix}-e^{iy}|\nu(dx)\nu(dy).  
\end{equation}
The variational characterization of $\mu_{V}$ becomes 
\begin{equation}\label{ep:var2}
\begin{split}
V(x)&\ge 2\int \log|e^{ix}-e^{iy}|\mu_{V}(dy)+C \quad\text{quasi-everywhere on }\T \\ 
V(x)&= 2\int \log|e^{ix}-e^{iy}|\mu_{V}(dy)+C 
\quad\text{quasi-everywhere on}\:\:\supp{\mu_{V}}. \\ 
\end{split}
\end{equation} 

It is useful to observe that in the case $Q$ is $C^{1}$, then we have 
\begin{equation}\label{e1:03}
\int Q'\,d\mu_{Q}=0.
\end{equation}
Indeed, taking the measures $\mu_{t}=(\theta_{t})_{\#}\mu_{Q}$ for small $t$  and $\theta_{t}(z)=e^{it}z$ and noticing that 
\[
E_{Q}(\mu_{t})-E_{Q}=\int (Q(e^{it}z)-Q(z))\mu_{Q}(dz)
\]
is minimized for $t=0$, implies the derivative at $0$ is 0, thus concluding \eqref{e1:03}.   In this paper, if $\phi:X\to Y$ is a measurable map and $\nu$ is a measure on $X$, then the pushforward $\phi_{\#}\nu$ is defined as 
\[
(\phi_{\#}\nu)(A)=\nu(\phi^{-1}(A)) \text{ for any }A\subset Y.  
\]

\section{Minimization of the logarithmic energy}\label{S:3}

The main results in this section are based on the following lemma. 

\begin{lemma}
For any $z,w\in\T$ with $z\ne w$, then 
\begin{equation}\label{e1:001}
\log|z-w|=-\sum_{n\ge1}\frac{1}{n}\Re(z^{n}\bar{w}^{n}),
\end{equation}
where the series is convergent uniformly on $|z-w|\ge\epsilon$ for each small $\epsilon>0$.  

Also for any finite measure $\mu$ on $\T$,
\begin{equation}\label{el:1}
\int\log|z-w|\mu(dw)=-\sum_{n\ge1}\frac{1}{n} \Re\left(\left(\int w^{-n}\mu(dw) \right)z^{n}\right)
\end{equation}
with the meaning that either side is finite if and only if the other one is in which case, both quantities are equal. Moreover, if one of the sides is $-\infty$, then the other is also $-\infty$.  

Finally, 
\begin{equation}\label{e1:003}
\iint \log|z-w|\mu(dz)\mu(dw)=-\sum_{n\ge1}\frac{1}{n}\left| \int z^{n}\mu(dz) \right|^{2}
\end{equation}
which means that the one side is finite if and only if the other side is also finite.   
\end{lemma}

\begin{proof}
We will work with the branch of the complex logarithm which is defined on the plane without the positive line.  For a complex number $z$ with $|z|<1$, we have the standard expansion
\[
\log(1-z)=-\sum_{n\ge1}\frac{z^{n}}{n}.
\]

By replacing $w$ by $\bar{z}w$, it suffices to show \eqref{e1:001} and \eqref{el:1} for the case $z=1$.  Now, for $w\in\T$ and $0<r<1$  such that $r>1-|1-w|^{2}/2$ it is easy to check that
\begin{equation}\label{e1:002}
\log|1-rw|\le \log|1-w| \le  \log|1-w/r|=-\log(r)+\log|1-rw|.  
\end{equation}
Next we argue that 
\[
\log|1-rw|=\Re(\log(1-rw))=-\sum_{n\ge1}\frac{r^{n}w^{n}}{n}.  
\]
It is clear that we can sandwich the $\log|1-w|$, however we need to make sure the series $-\sum_{n\ge1}\frac{r^{n}w^{n}}{n}$ converges to $-\sum_{n\ge1}\frac{w^{n}}{n}$ as $r\nearrow1$.  This is relatively standard in Fourier series, though a quick argument is based on the following elementary formula valid for $0\le r\le 1$: 
\begin{equation}\label{e1:004}
\sum_{n=1}^{N}\frac{r^{n}w^{n}}{n}=\sum_{n=1}^{N}\int_{0}^{1}r^{n}w^{n}t^{n-1}dt=\int_{0}^{1}\frac{rw(1-r^{N}w^{N}t^{N})}{1-rwt}dt.
\end{equation}
Dominated convergence theorem justifies that we can let $N\to\infty$ to obtain for $0\le r\le 1$,  that 
\[
\sum_{n=1}^{\infty}\frac{r^{n}w^{n}}{n}=\int_{0}^{1}\frac{rw}{1-rwt}dt.
\]
Consequently, we actually obtain with little effort that for $w\in T$, $w\ne 1$ and $r\in[\max(0,1-\frac{|1-w|^{2}}{2}),1)$,
\[
-\frac{4(1-r)}{|1-w|^{2}}\le \Re(\sum_{n=1}^{\infty}\frac{w^{n}}{n}-\sum_{n=1}^{\infty}\frac{r^{n}w^{n}}{n})\le \frac{4(1-r)}{|1-w|^{2}},
\]
which combined with \eqref{e1:002}, yields  for $w\ne 1$, 
\[
-\frac{4(1-r)}{|1-w|^{2}}-\sum_{n=1}^{\infty}\frac{\Re(w^{n})}{n}\le \log|1-w|\le -\log(r)+\frac{4(1-r)}{|1-w|^{2}}-\sum_{n=1}^{\infty}\frac{\Re(w^{n})}{n},
\]
valid for $r\in[\max(0,1-\frac{|1-w|^{2}}{2}),1)$.  Letting $r\nearrow 1$ gives \eqref{e1:001}.  

Now we argue towards \eqref{el:1}.  We claim first that  $\int \log|1-w|\mu(dw)$ is finite if and only if 
\[
\lim_{r\nearrow1}\int\log|1-rw|\mu(dw)\text{ is finite}.
\]
To see this we argue first that $\log|1-w|$ is bounded above by $\log 2$ and thus  Fatou's Lemma implies that 
\[
0\le\int \left(-\log(|1-w|/2)\right)\mu(dw)= \int\liminf_{r\nearrow1}\left(-\log(|1-rw|/2)\right)\mu(dw)\le \liminf_{r\nearrow 1}\int\left(-\log(|1-rw|/2)\right)\mu(dw)<\infty.  
\]
which guarantees the integrability of $\log|1-w|$ against the measure $\mu$.  Now finiteness of $\int\log|1-w|\mu(dw)$ combined with the inequality
\[
\log|1-w|\le \log|1-w/r|=-\log r+\log|1-rw|
\]
yields
\[
0\le -\log(|1-rw|/2)\le -\log(|1-w|/2)-\log r
\]
and from this the dominated convergence theorem takes care of the claim.  

Next, since 
\[
\int\log|1-rw|\mu(dw)=-\sum_{n\ge1}\frac{1}{n} r^{n}\Re\left(\int w^{n}\mu(dw) \right)
\]
convergence of the left hand side as $r\nearrow 1$, is equivalent to the Abel-Poisson summability of the series $-\sum_{n\ge1}\frac{1}{n}\Re\left(\int w^{n}\mu(dw) \right)$.  From the Tauberian theorem (\cite{littlewood} or \cite[Theorem 9.5.4]{Stratila}), this is equivalent to the summability of the series $-\sum_{n\ge1}\frac{1}{n}\Re\left(\int w^{n}\mu(dw) \right)$.  

For the proof of \eqref{e1:003} we use a similar argument to the proof of \eqref{el:1} replacing $\log|z-w|$ by $\log|z-rw|$ with $0<r<1$.   The details are left to the reader.  \qedhere
\end{proof}

The first application is the following.  

\begin{corollary}\label{c:1}
For the Haar measure $\alpha$ on $\T$,  
\[
\int \log|z-w|\alpha(dw)=0,\quad \forall z\in\T.
\]
Moreover, if $\mu$ is a finite measure on $\T$ such that 
\[
\int \log|z-w|\mu(dw)=c,\quad \text{ almost everywhere for all }z\in \T,
\]
where ``almost everywhere'' is with respect to the Haar measure, then $\mu=k\alpha$ for some constant $k\in \R$.  Also we must have then $c=0$.  
\end{corollary}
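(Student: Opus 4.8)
The plan is to expand everything in the Fourier basis $\{z^n\}_{n\in\Z}$ using the previous lemma. For the first assertion, I would simply apply the identity \eqref{el:1} with $\mu = \alpha$: by \eqref{e:p:1} we have $\int w^{-n}\,\alpha(dw) = 0$ for all $n \in \Z^*$, so every term in the series \eqref{el:1} vanishes and $\int \log|z-w|\,\alpha(dw) = 0$ for all $z \in \T$. This is immediate and not the substance of the corollary.

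For the second (and main) assertion, suppose $\mu$ is a measure on $\T$ with $\int \log|z-w|\,\mu(dw) = c$ for $\alpha$-a.e. $z$. First I would note that since $\log|z-w|$ is locally integrable on $\T$ (it has only a logarithmic singularity), the function $z \mapsto \int \log|z-w|\,\mu(dw)$ is in $L^1(\alpha)$, so it has a Fourier expansion. By \eqref{el:1}, the Fourier coefficient of this function at frequency $n \ne 0$ is $-\frac{1}{2|n|}\int w^{-n}\,\mu(dw)$ (matching real parts appropriately — more precisely, the coefficient of $z^n$ in the $L^2$-expansion is $-\frac{1}{2|n|}\int w^{-n}\,\mu(dw)$), while the constant term is the total mass contribution, namely the $n=0$ part which from the lemma's formula is $0$ — wait, here I should be careful: the formula \eqref{el:1} as written has no $n=0$ term, so the constant Fourier coefficient of $\int\log|z-w|\mu(dw)$ is $0$. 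Since this function equals the constant $c$ a.e., comparing Fourier coefficients gives: the constant term forces $c = 0$, and for each $n \ne 0$ the coefficient must vanish, i.e. $\int w^{-n}\,\mu(dw) = 0$ for all $n \in \Z^*$.

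Finally, from $\int w^n\,\mu(dw) = 0$ for all $n \in \Z^*$ together with $\mu$ being a finite measure, I would conclude $\mu = \alpha$: the measure $\mu - \alpha$ (assuming $\mu$ has total mass $1$; if the hypothesis does not presuppose this, the $n=0$ Fourier coefficient still must match that of a constant, but one needs $\mu$ to be a probability measure or at least to pin down its mass — I would check the statement assumes $\mu$ is a probability measure, as is implicit from context) has all Fourier coefficients zero, and since $\{z^n\}_{n\in\Z}$ is a complete orthonormal system in $L^2(\alpha)$ (noted in the Preliminaries), a finite measure with vanishing Fourier coefficients is the zero measure by the uniqueness theorem for Fourier coefficients of measures on $\T$. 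Hence $\mu = \alpha$.

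The main obstacle, and the point requiring genuine care rather than routine bookkeeping, is the interchange of summation and integration needed to justify that the Fourier coefficients of $z \mapsto \int \log|z-w|\,\mu(dw)$ are exactly the terms appearing in \eqref{el:1} — that is, justifying \eqref{el:1} is a bona fide $L^2(\alpha)$ (or at least $L^1(\alpha)$) Fourier expansion and that one may read off coefficients term by term. This hinges on $\log|z-w| \in L^1(\alpha \otimes \mu)$ (which holds since $\mu$ is finite and $\log|\cdot|$ is integrable against $\alpha$ uniformly) and on Fubini/dominated convergence to pass the $\alpha$-integration against $\bar z^n$ through the series. Everything after that is a direct comparison of Fourier coefficients plus the completeness of the exponentials, which is standard.
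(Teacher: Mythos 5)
Your proof is correct and is exactly the route the paper intends: the corollary is presented as an immediate consequence of the preceding lemma (the paper gives no separate proof), read off by identifying the Fourier coefficients of $z\mapsto\int\log|z-w|\,\mu(dw)$ via \eqref{el:1} and Fubini, which is precisely what you do. Your caveat about the total mass is well taken --- as literally stated the hypothesis only forces $\mu$ to be a nonnegative multiple of $\alpha$, so $\mu$ must indeed be understood as a probability measure, and the interchange of sum/integral you flag is the only analytic point requiring care, which you justify.
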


The only thing we have to point out here is the choice of the constant $k$ which is chosen so that $\mu(\T)=k$.  Then $\int w^{n}\mu(dw)=\int w^{n}k\alpha(dw)$ for all $n\in\Z$.  This is enough to conclude that $\mu=k\alpha$.  

With this at hand we can continue with the first result, which is a way of solving for the equilibrium measure of a given potential.  

\begin{theorem}\label{t:1}
Assume that $Q:\T\to\R$  is a $C^{3}$ potential and $A\in\R$ is a constant.  
Then,  there exists a unique signed measure $\mu$ on $\T$ of finite total variation which solves 
\begin{equation}\label{e11:0}
\begin{cases}
2\int \log|z-w|\mu(dw)=Q(z)+C \text{ almost everywhere for } z\in\T,\\
\mu(\T)=A,
\end{cases}
\end{equation}
where ``almost everywhere'' is with respect to $\alpha$. The solution $\mu$ is given by $\mu(dz)=u(z)\alpha(dz)$ where
\begin{equation}\label{e11:3}
u(z)=A+\int \frac{i(Q'(z)-Q'(w))(z+w)}{(z-w)}\alpha(dw)=A+2\int \frac{(Q'(z)-Q'(w))\Im(zw^{-1})}{|z-w|^{2}}\alpha(dw).
\end{equation}
In addition, the constant $C$ must be given by  $C=-\int Q\,d\alpha$.

Moreover, for any $C^{1}$ function $\phi$ on $\T$,
\begin{equation}\label{e11:4}
\int \phi(x)\mu(dx)=A\int\phi \,d\alpha
-\iint
\frac{(Q(z)-Q(w))(\phi(z)-\phi(w))}{|z-w|^{2}}\,\alpha(dz)\alpha(dw).
\end{equation}
\end{theorem}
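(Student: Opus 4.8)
The whole statement reduces to a computation with Fourier coefficients, for which the Lemma above is tailored. Write $Q(z)=\sum_{n\in\Z}\widehat Q(n)z^{n}$ for the Fourier expansion of $Q$; since $Q$ is $C^{3}$ and real-valued we have $\widehat Q(-n)=\overline{\widehat Q(n)}$ and $|\widehat Q(n)|=O(|n|^{-3})$, so $\sum_{n\ne0}|n|\,|\widehat Q(n)|<\infty$. By \eqref{el:1}, for a finite (real) signed measure $\mu$ the function $z\mapsto\int\log|z-w|\,\mu(dw)$ lies in $L^{1}(\alpha)$ and has $n$-th Fourier coefficient $-\tfrac1{2|n|}\int w^{-n}\mu(dw)$ for $n\ne0$, and $0$ for $n=0$ (the $n=0$ statement being Corollary~\ref{c:1}). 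This is the one ingredient I would use repeatedly.

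First I would settle uniqueness together with the value of $C$. If $\mu$ solves \eqref{e11:0}, then comparing Fourier coefficients of $2\int\log|z-w|\mu(dw)$ with those of $Q+C$ forces $0=\widehat Q(0)+C$, i.e.\ $C=-\int Q\,d\alpha$, from the $n=0$ mode, and $\int w^{-n}\mu(dw)=-|n|\widehat Q(n)$ from the modes $n\ne0$, while the constraint $\mu(\T)=A$ fixes the $0$-th coefficient to be $A$. Since a finite signed measure on $\T$ is determined by its Fourier coefficients (trigonometric polynomials are dense in $C(\T)$), there is at most one solution. For existence I would then take $u(z):=A-\sum_{n\ne0}|n|\widehat Q(n)z^{n}$; the series converges absolutely by the $C^{3}$ bound, so $u$ is continuous and $\mu:=u\,\alpha$ is a finite signed measure whose Fourier data are exactly the ones prescribed above. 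Both sides of the first line of \eqref{e11:0} are then $L^{1}$ functions with the same Fourier coefficients, hence equal $\alpha$-almost everywhere, so $\mu$ solves the system.

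Next I would identify this $u$ with the integral in \eqref{e11:3}. Differentiating the Fourier expansion gives $Q'(z)=i\sum_{n}n\widehat Q(n)z^{n}$, hence $i(Q'(z)-Q'(w))=-\sum_{n\ne0}n\widehat Q(n)(z^{n}-w^{n})$. A direct computation---writing $\frac{z^{n}-w^{n}}{z-w}$ as a finite sum of monomials $z^{j}w^{k}$ and integrating with $\int w^{m}\alpha(dw)=\delta_{m,0}$---yields
\[
\int_{\T}\frac{(z+w)(z^{n}-w^{n})}{z-w}\,\alpha(dw)=\operatorname{sgn}(n)\,z^{n},\qquad n\ne0 .
\]
Integrating termwise against this identity gives $\int\frac{i(Q'(z)-Q'(w))(z+w)}{z-w}\,\alpha(dw)=-\sum_{n\ne0}n\operatorname{sgn}(n)\widehat Q(n)z^{n}=-\sum_{n\ne0}|n|\widehat Q(n)z^{n}=u(z)-A$, which is the first expression in \eqref{e11:3}; the second is the pointwise identity $\frac{i(z+w)}{z-w}=\frac{2\,\Im(zw^{-1})}{|z-w|^{2}}$ on $\T\times\T\setminus\{z=w\}$, obtained by multiplying numerator and denominator by $\overline{z-w}$ and using $|z|=|w|=1$, together with the constant $C=-\int Q\,d\alpha$ found above.

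Finally, for \eqref{e11:4}: for a $C^{1}$ function $\phi$ one has $\int\phi\,d\mu=\int\phi\,u\,d\alpha=\sum_{n}\widehat\phi(n)\widehat u(-n)=A\int\phi\,d\alpha-\sum_{n\ne0}|n|\widehat Q(-n)\widehat\phi(n)$, so it remains to identify the last sum with the stated double integral. For this I would prove, on monomials and then by bilinearity (absolute convergence being guaranteed by the Fourier decay of $Q$ and $\phi$), the identity
\[
\iint\frac{(f(z)-f(w))(g(z)-g(w))}{|z-w|^{2}}\,\alpha(dz)\alpha(dw)=\sum_{n}|n|\,\widehat f(n)\widehat g(-n),
\]
using the elementary relation $(z-w)^{2}=-zw\,|z-w|^{2}$ to turn the kernel into a product of finite geometric sums; applied with $f=Q$, $g=\phi$ (and a reindexing $n\mapsto-n$) this is precisely \eqref{e11:4}. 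Alternatively one may pass from \eqref{e11:3} to \eqref{e11:4} by the circle integration-by-parts rule $\int Q'h\,d\alpha=-\int Q h'\,d\alpha$, at the price of tracking the principal value of the kernel $\tfrac{z+w}{z-w}$. I do not expect any single deep step here: the work is bookkeeping---justifying the term-by-term interchanges of sums and integrals (all controlled by $Q\in C^{3}$), handling the reality/conjugation conventions when matching the $\Re$-series produced by the Lemma, invoking Fourier uniqueness of finite signed measures for the uniqueness part, and (if one avoids the Fourier proof of \eqref{e11:4}) the principal-value accounting for the singular kernel. The most delicate point is probably ensuring the last identity is stated and proved cleanly enough to cover general $C^{1}$ test functions.
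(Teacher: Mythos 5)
Your proposal is correct and follows essentially the same route as the paper: expand $Q$ in Fourier series, match Fourier coefficients of both sides of \eqref{e11:0} to force $C=-\int Q\,d\alpha$ and $\int w^{-n}\mu(dw)=-|n|\widehat Q(n)$ (which gives uniqueness, the paper citing Corollary~\ref{c:1} for the same content), construct $u$ from these coefficients, and resum the series to obtain the kernels in \eqref{e11:3} and \eqref{e11:4}. Your monomial identities $\int(z+w)(z^{n}-w^{n})/(z-w)\,\alpha(dw)=\operatorname{sgn}(n)z^{n}$ and $\iint\frac{(f(z)-f(w))(g(z)-g(w))}{|z-w|^{2}}\,\alpha(dz)\alpha(dw)=\sum_{n}|n|\widehat f(n)\widehat g(-n)$ are just a clean repackaging of the paper's summation of the geometric series $\sum_{n\ge1}nw^{n}z^{-n}$ together with the relation $(z-w)^{2}=-zw|z-w|^{2}$, so no genuinely different ideas are involved.
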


\begin{proof} The uniqueness follows easily from Corollary~\ref{c:1}, and thus we only have to deal with existence.   

Now, write $Q$ as the power series
\[
Q(z)=\sum_{n\in\Z} \left(\int Q(w)w^{-n}\alpha(dw)\right)z^{n}. 
\]  
Notice that because $Q$ is $C^{3}$, integration by parts shows that the coefficients $\int Q(w)w^{-n}\alpha(dw)$ decay as $1/n^{3}$ for large $n$, therefore the series above is absolutely convergent.   

Next, we look for a solution of \eqref{e11:0} in the form $\mu=u\,d\alpha$.   With the help of \eqref{el:1}, this means that
\[
-\sum_{n\in\Z\backslash\{0\}}\frac{1}{|n|} \left(\int w^{-n}u(w)\alpha(dw) \right)z^{n}=C+\int Q\,d\alpha+\sum_{n\in\Z\backslash\{0\}} \left(\int Q(w)w^{-n}\alpha(dw)\right)z^{n}
\]
from which, equating the coefficients of $z^{n}$ for $n\in\Z$, we first get that $C=-\int Q\,d\alpha$ and
\[
\int w^{-n}u(w)\,\alpha(dw)=-|n|\int w^{-n}Q(w)\,\alpha(dw)=i\mathrm{sign}(n)\int w^{-n}(Q'(z)-Q'(w))\,\alpha(dw)\quad \forall n\ne 0.
\]
Now we recapture the function $u$ in terms of its Fourier coefficients, as 
\begin{align*}
u(z)&=A+i\sum_{n\in\Z\backslash\{0\}} \mathrm{sign}(n)\left(\int w^{-n}(Q'(z)-Q'(w))\alpha(dw)\right)z^{n} \\
&= A-2\sum_{n\ge1}\Im\left[\left(\int w^{-n}(Q'(z)-Q'(w))\alpha(dw)\right)z^{n}\right] \\ 
&= A+i\left(\int \frac{(Q'(z)-Q'(w))(z+w)}{(z-w)}\alpha(dw)\right),
\end{align*}
which yields \eqref{e11:3}.  Notice that we used the decay of $\int Q(w)w^{-n}\alpha(dw)$ to guarantee the convergence of the series above as well as the exchange of the integration with the sum.   For example the $C^{3}$ regularity of $Q$ gives that the coefficients $\int w^{-n}(Q'(z)-Q'(w))\alpha(dw)$ decay as $1/n^{2}$, thus the series 
\[
\sum_{n\ge1}\left(\int w^{-n}(Q'(z)-Q'(w))\alpha(dw)\right)z^{n}
\] is absolutely convergent.  

To get \eqref{e11:4}, we observe that it is enough to prove it for functions $\phi$ which are $C^{2}$, an easy approximation taking care of the rest.  Thus, we expand $\phi$ as a Fourier series and then write 
\begin{align*}
\int \phi\, d\mu&=\int \phi u\,d\alpha=\sum_{n\in\Z}\left(\int \phi(z)z^{-n}\,\alpha(dz)\right)\left(\int u(w)w^{n}\,\alpha(dw)\right)\\
&=A\int\phi\,d\alpha-\sum_{n\in\Z\backslash\{0\}}|n|\iint w^{n}z^{-n}Q(w)\phi(z)\,\alpha(dw)\alpha(dz)\\
&=A\int\phi\,d\alpha+\sum_{n\ge1}n\iint w^{n}z^{-n}(Q(z)-Q(w))(\phi(z)-\phi(w))\,\alpha(dw)\alpha(dz)\\
&=A\int\phi\,d\alpha+\iint \left(\sum_{n\ge1}nw^{n}z^{-n}\right)(Q(z)-Q(w))(\phi(z)-\phi(w))\,\alpha(dw)\alpha(dz) \\
=&A\int\phi\,d\alpha+\iint \frac{(Q(z)-Q(w))(\phi(z)-\phi(w))wz^{-1}}{(1-wz^{-1})^{2}}\,\alpha(dw)\alpha(dz) 
\end{align*}
which combined with the easily checked identity $\frac{wz^{-1}}{(1-wz^{-1})^{2}}=-\frac{1}{|z-w|^{2}}$ satisfied for distinct $z,w\in\T$ we conclude \eqref{e11:4}.   As technical points, notice that the equality in the first line is justified by the fact that $\phi$ is $C^{2}$, thus the series expansion of $\phi$ is absolutely convergent as the coefficients $\int \phi(z)z^{-n}\,\alpha(dz)$ decay as $1/n^{2}$.  Another point is that that the third and fourth lines hold true because both $Q$ and $\phi$ are $C^{2}$.  \qedhere

\end{proof}

One particularly interesting consequence of the above result is that under some convexity assumption on $Q$, we can conclude full support condition.  This property is well known for the case of measures on the real line, which states that if the potential is convex, then the support is one interval.    More precisely, in this framework we have the following.   

\begin{corollary}\label{c:2}
Assume $Q$ is a $C^{3}$ potential on $\T$ with $Q''(z)\ge\beta-\frac{1}{2\log 2}$ for some constant  $\beta>0$ and any $z\in\T$.  Then $\mu_{Q}$ has a continuous and positive density with respect to $\alpha$ and 
\[
\frac{d\mu}{d\alpha}\ge 2\beta\log 2 .
\]  

\end{corollary}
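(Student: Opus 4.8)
The plan is to produce $\mu_{Q}$ explicitly by specializing Theorem~\ref{t:1}, and then to read off the density bound from the resulting formula. I would apply Theorem~\ref{t:1} with $A=1$: it gives a real signed measure of total mass $1$, of the form $\mu(dz)=u(z)\alpha(dz)$ with $u$ as in \eqref{e11:3}, satisfying $2\int\log|z-w|\,\mu(dw)=Q(z)+C$ for $\alpha$-a.e.\ $z$, with $C=-\int Q\,d\alpha$. Everything then reduces to the pointwise estimate $u\ge 2\beta\log 2$. Indeed, $\beta>0$ then forces $u>0$, so $\mu$ is a probability measure with $\supp\mu=\T$; since $u$ is continuous, both sides of the displayed identity are continuous in $z$, so it holds for \emph{every} $z$, and this is exactly the variational characterization \eqref{ep:var} of the equilibrium measure (the inequality ``$\ge$ q.e.\ on $\T$'' being a trivial equality). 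By \cite[Thm.\ I.1.3]{ST} this gives $\mu=\mu_{Q}$, whence $\frac{d\mu_{Q}}{d\alpha}=u$ is continuous, positive, and bounded below by $2\beta\log 2$.

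For the estimate I would first rewrite \eqref{e11:3} in the angular variable. Setting $V(x)=Q(e^{ix})$ — so that $Q'(e^{ix})=V'(x)$ and $Q''(e^{ix})=V''(x)$ — and using the elementary identity $\frac{i(z+w)}{z-w}=\cot\frac{x-y}{2}$ for $z=e^{ix}$, $w=e^{iy}$, formula \eqref{e11:3} becomes
\[
u(e^{ix})=1+\frac{1}{2\pi}\int_{-\pi}^{\pi}\big(V'(x)-V'(y)\big)\cot\tfrac{x-y}{2}\,dy.
\]
Passing to $s=x-y$ (using $2\pi$-periodicity) and integrating by parts — the antiderivative of $\cot\frac{s}{2}$ is $2\log|\sin\frac{s}{2}|$, the boundary terms vanish at $s=\pm\pi$ because $\log|\sin\frac{\pi}{2}|=0$ and at $s=0$ because $V'(x)-V'(x-s)=O(s)$ while $\log|\sin\frac{s}{2}|$ is only logarithmically singular — I arrive at
\[
u(e^{ix})=1-\frac1\pi\int_{-\pi}^{\pi}V''(x+s)\,\log\big|\sin\tfrac{s}{2}\big|\,ds.
\]
Since $Q\in C^{3}$ the function $V''$ is continuous and $s\mapsto\log|\sin\frac{s}{2}|$ lies in $L^{1}$, so this already shows $u$ is continuous.

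It remains to bound the last integral below. From Corollary~\ref{c:1} (which gives $\int\log|z-w|\,\alpha(dw)=0$) together with $|e^{ix}-e^{iy}|=2|\sin\frac{x-y}{2}|$, one gets $\frac{1}{2\pi}\int_{-\pi}^{\pi}\log|\sin\frac{s}{2}|\,ds=-\log 2$. Writing $V''=c+h$ with $c=\beta-\frac{1}{2\log 2}$ (the hypothesis) and $h\ge 0$, and using $\log|\sin\frac{s}{2}|\le 0$, this yields
\[
u(e^{ix})=1+2c\log 2-\frac1\pi\int_{-\pi}^{\pi}h(x+s)\,\log\big|\sin\tfrac{s}{2}\big|\,ds\ \ge\ 1+2c\log 2=2\beta\log 2,
\]
which is the claim; the specific constant $\frac{1}{2\log 2}$ in the hypothesis is chosen precisely so that $1+2c\log 2$ collapses to $2\beta\log 2$. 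The only mildly technical point is the reduction of \eqref{e11:3} to the $\log|\sin\frac{s}{2}|$-convolution form — i.e.\ verifying that the integration by parts leaves no boundary contribution — plus the routine observation that continuity of $u$ upgrades the $\alpha$-a.e.\ identity of Theorem~\ref{t:1} to the genuine identity needed for \eqref{ep:var}; conceptually the whole point is just that in this form the constant part of $V''$ contributes $2c\log 2$ by Corollary~\ref{c:1}, while the excess $h\ge 0$ contributes non-negatively since $\log|\sin\frac{s}{2}|\le 0$.
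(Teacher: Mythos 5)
Your proof is correct and is essentially the paper's argument: both apply Theorem~\ref{t:1} with $A=1$ and bound the explicit density \eqref{e11:3} from below using $Q''\ge\beta-\frac{1}{2\log 2}$, the only difference being that you integrate by parts so as to test $Q''$ against the nonnegative kernel $-\frac{1}{\pi}\log\left|\sin\frac{s}{2}\right|$ (of total mass $2\log 2$), whereas the paper bounds the difference quotient of $Q'$ pointwise and then evaluates $\int_{0}^{\pi} t\cot(t/2)\,dt=2\pi\log 2$, which is the same computation performed in the opposite order. Your explicit upgrade of the $\alpha$-a.e.\ identity to an everywhere identity and the identification $\mu=\mu_{Q}$ via the variational characterization \eqref{ep:var} is a step the paper leaves implicit, and you carry it out correctly.
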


\begin{proof}

Consider the argument function of a complex number, $\mathrm{Arg}(z)$ to be the angle between the vector $z$ and the positive real axis, thus defined in $(-\pi,\pi]$.   Now, the convexity condition on $Q$ means that $t\to Q(we^{it})-(\beta-\frac{1}{2\log 2})t^{2}/2$ is a convex function on $(-\pi,\pi]$.  Consequently, 
\[
\frac{Q'(we^{it})-Q'(w)}{t}\ge\beta-\frac{1}{2\log 2}.  
\]
Now for another complex number $z\in\T$, we can take $t=\mathrm{Arg}(zw^{-1})$ and thus obtain 
\[
\frac{Q'(z)-Q'(w)}{\mathrm{Arg}(zw^{-1})}\ge\beta-\frac{1}{2\log 2}.  
\]
From our choice of the argument, $\Im(z)$ and $\mathrm{Arg}(z)$ have the same sign and then 
\[
\frac{(Q'(z)-Q'(w))\Im(zw^{-1})}{|z-w|^{2}}\ge\left(\beta-\frac{1}{2\log 2}\right)\frac{\mathrm{Arg}(zw^{-1})\Im(zw^{-1})}{|z-w|^{2}}=\left(\beta-\frac{1}{2\log 2}\right)\frac{\mathrm{Arg}(zw^{-1})\Im(zw^{-1})}{|1-zw^{-1}|^{2}}.
\]
  
Next, solving \eqref{e11:0} with $A=1$ gives the solution $u$ from \eqref{e11:3} and using the above inequality leads to
\[
\begin{split}
u(z)&\ge 1+2\left(\beta-\frac{1}{2\log 2}\right)\int \frac{\mathrm{Arg}(zw^{-1})\Im(zw^{-1})}{|1-zw^{-1}|^{2}}\alpha(dw)=1+\left(\beta-\frac{1}{2\log 2}\right)\frac{1}{2\pi}\int_{0}^{2\pi}\frac{t\cos(t/2)}{\sin(t/2)}dt\\ 
&=1+2\left(\beta-\frac{1}{2\log 2}\right)\frac{1}{2\pi}\int_{0}^{\pi} t\cot(t/2)dt=2\beta\log 2>0.  
\end{split}
\]
The continuity of $u$ is clear from \eqref{e11:3}.  \qedhere

\end{proof}

We introduce now the operators which play an important role in the rest of the paper.   

\begin{definition}\label{d:1} For a $C^{2}$ function $\phi:\T\to\C$, set
\begin{equation}\label{e:EN}
\begin{split}
(\mathcal{E}\phi)(z)&=-2\int \log|z-w|\phi(w)\alpha(dw), \\
(\mathcal{N}\phi)(z)&= \int \frac{(\phi'(z)-\phi'(w))(z+w)}{i(z-w)}\alpha(dw)=-2\int \frac{(\phi'(z)-\phi'(w))\Im(zw^{-1})}{|z-w|^{2}}\alpha(dw) \\
(\mathcal{U}\phi)(z)&= \int  \frac{(\phi(z)-\phi(w))(1-zw) }{(z-w)}\alpha(dw) + \phi(z)(z+1/z)\\
(\mathcal{V}\phi)(z)&= \int  \frac{(\phi(z)-\phi(w))(1-zw) }{(z-w)}\alpha(dw).
\end{split}
\end{equation}
\end{definition}

For a $C^{3}$ function $\phi$ on $\T$, Theorem~\ref{t:1} states that  $\mathcal{N}\phi$  is the unique solution $\psi$  to 
\begin{equation}\label{e1:0}
\begin{cases}
2\int \log|z-w|\psi(w)\, \alpha(dw)=-\phi(z)+\int \phi\, d\alpha \text{ almost everywhere for } z\in \T,\\
\int \psi\, d\alpha=0.
\end{cases}
\end{equation}

Here we collect a number of key properties of these operators.

\begin{proposition}\label{p:1}  
\begin{enumerate}
\item For any $C^{2}$ function $\phi $ on $\T$, $\mathcal{E}\phi$ is a $C^{2}$ function and $\mathcal{N}\phi$ is a continuous function. In addition,  if $\phi$ is $C^{3}$ then
\begin{equation}\label{e:ep:EN}
\begin{split}
\mathcal{E}\mathcal{N}\phi &= \phi - \int \phi\, d\alpha , \\
\mathcal{N}\mathcal{E}\phi &=\phi-\int \phi\, d\alpha.
\end{split}
\end{equation}

\item $\mathcal{E}1=0$ and  $\mathcal{E}z^{n}=\frac{1}{|n|}z^{n}$, $n\ne0$. For any $n\in\Z$, $\mathcal{N}z^{n}=|n| z^{n}$.

\item For any  $C^{2}$ functions $\phi,\psi:\T\to\C$,  
\begin{equation}\label{ep:9}
\langle \mathcal{N}\phi,\psi \rangle=\iint
\frac{(\phi(z)-\phi(w))(\overline{\psi(z)-\psi(w)})}{|z-w|^{2}} \, \alpha(dz)\,\alpha(dw).
\end{equation}
In particular, $\langle \mathcal{N}\phi,\psi \rangle=\langle \phi,\mathcal{N}\psi \rangle$ and $\langle\mathcal{N}\phi,\phi \rangle\ge0$.

\item $\mathcal{E}$ can be extended to a bounded non-negative operator on $L^{2}_{0}(\alpha)$ with 
\begin{equation}\label{ep:10:e}
\mathcal{E}\le I.
\end{equation}
Also $\mathcal{N}$ has a (unique) extension to a non-negative  self-adjoint operator (still called $\mathcal{N}$) such that restricted to  $L^{2}_{0}(\alpha)$ satisfies
\begin{equation}\label{ep:10:n}
I\le \mathcal{N}.
\end{equation}

\item The operators $\mathcal{U}$ and $\mathcal{V}$ are characterized by 
\begin{equation}\label{ep:uv}
\mathcal{U}z^n=\begin{cases}
z^{n+1}, &n\ge1 \\
z+z^{-1}, &  n=0 \\ 
z^{n-1}, & n\le -1. 
\end{cases}
\text{ and }
\mathcal{V}z^n=\begin{cases}
z^{n-1}, & \text{ for } n\ge1 \\
0, & n=0 \\ 
z^{n+1}, & n\le -1 
\end{cases}
\end{equation}
In addition, one can extend these operators to bounded operators on $L^{2}(\alpha)$ which satisfy $\mathcal{U}^{*}=\mathcal{V}$, $\mathcal{V}^{*}=\mathcal{U}$.  Also, $\mathcal{VU}=I$ while $\mathcal{UV}=I-\Pi$, where $\Pi$ is the projection on the set of constant functions.  

\item We have 
\[
\mathcal{VNU}=\mathcal{N}+I.
\]

\item Set $\mathcal{L}\phi=\mathcal{N}^{2}\phi$ for $C^{2}$ functions.  Then 
\begin{equation}\label{ep:8}
\langle \mathcal{L}\phi,\psi\rangle=\int \phi' \bar{\psi}'d\alpha,
\end{equation}
and
\[
(\mathcal{L}\phi)(z)=-\phi''(z)
\]
which is the negative Laplacian on $\T$.

\end{enumerate}
\end{proposition}

\begin{proof}

\begin{enumerate}
\item We prove first that if $\phi$ is $C^{2}$, then $\mathcal{E}\phi$ is also $C^{2}$.  To this end, write $z=e^{it}$ and $w=e^{is}$ and $\psi(s)=\phi(e^{is})$.  With these notations, and the identification of $\T$ with $[t-\pi,t+\pi)$ we write
\[
\begin{split}
(\mathcal{E}\phi)(z)&=-
\frac{1}{\pi}\int_{t-\pi}^{t+\pi} \log|e^{it}-e^{is}|(\psi(s)-\psi(t)-(s-t)\psi'(t))ds \\
&=-
\frac{1}{\pi}\int_{-\pi}^{\pi} \log(2|\sin(u/2)|)(\psi(u+t)-\psi(t)-u\psi'(t))du
\end{split}
\]
where we used the fact that $\int_{-\pi}^{\pi}\log(2|\sin(u/2)|)ds=0$ and the similar integral $\int_{-\pi}^{\pi}u\log(2|\sin(u/2)|)du=0$.   This expression of $\mathcal{E}\phi$ combined with the dominated convergence theorem, implies that we can take two derivatives and the second derivative is actually continuous.  The details are straightforward.  

For the operator $\mathcal{N}$, we need to show that if $\phi$ is $C^{2}$, then $\mathcal{N}\phi$ is continuous.  This follows in a similar vein from the writing 
\[
(\mathcal{N}\phi)(e^{it})=-\int_{t-\pi}^{t+\pi}\frac{\psi'(t)-\psi'(s)}{\tan((t-s)/2)}ds=\int_{-\pi}^{\pi}\frac{\psi'(t)-\psi'(t+u)}{\tan(u/2)}du=\int_{-\pi}^{\pi}\int_{0}^{1}\frac{u\psi''(t+\tau u)}{\tan(u/2)}d\tau du
\]
and the continuity follows from the dominated convergence theorem.  

The equality in \eqref{e:ep:EN} follows from Theorem~\ref{t:1}.   

\item This is a direct calculation which can be also seen from \eqref{el:1} combined with \eqref{e:ep:EN}. 

\item This follows from Theorem~\ref{t:1}, more precisely from \eqref{e11:4}.

\item The claim for the operator $\mathcal{E}$ follows from the previous item and the fact that for any $C^{3}$ function $\phi$, we can write $\phi=\sum_{n\in\Z}\gamma_{n}z^{n}$ (in $L^{2}(\alpha)$ which is also absolutely convergent) and thus 
\[
\langle \mathcal{E}\phi,\phi \rangle=\sum_{n\in\Z\backslash\{0\}}\frac{\gamma_{n}^{2}}{|n|}\le \sum_{n\in\Z}\gamma_{n}^{2}=\|\phi\|^{2}
\]  
which, from the density of $C^{3}$ functions in $L^{2}(\alpha)$ shows that one can extend the operator to a bounded operator on $L^{2}(\alpha)$.  Also, the positivity and the contraction property follows easily. 

The claim about the operator $\mathcal{N}$ can be deduced from the fact that $\mathcal{N}$ is the inverse operator of $\mathcal{E}$ on $C^{3}$ functions.  Now $\mathcal{E}$ has the extension to a positive operator on $L^{2}_{0}(\alpha)$ and thus we can use functional calculus to define the operator $\mathcal{N}$ as the inverse of $\mathcal{E}$ on $L^{2}_{0}(\alpha)$.  Clearly this is a positive operator and is an extension of the operator $\mathcal{N}$ from $C^{3}$ functions.   

The uniqueness of the extension can be argued from the fact that $L^{2}(\alpha)$ is isometric to $\ell^{2}(\Z):=\{e:\Z\to\C:\sum_{n\in\Z}|e(n)|^{2}<\infty\}$ via the isometry $\Phi(z^{n})=e_{n}$ where $e_{n}(m)=\delta_{nm}$ (it is $1$ at $m=n$ and $0$ otherwise).   In this isometry, the operator $\mathcal{N}$ becomes the multiplication by the function $f$ which is defined as $f(n)=|n|$ and this multiplication operator has a unique self-adjoint extension, thus the conclusion.

\item Equation \eqref{ep:uv} is easy to deduce from a direct calculation.  All the other statements follow easily from \eqref{ep:uv}.   

\item[(6,7)] The other properties follow from a simple check on monomial functions $z^{n}$ and then using a density argument.    \qedhere
\end{enumerate}

\end{proof}

The next result gives the solution to \eqref{e11:0} in terms of the operator $\mathcal{N}$.   

\begin{proposition} 
For a $C^{3}$ potential $Q$ on $\T$, the probability measure solution $\mu_{Q}$ of \eqref{e11:0} is written as 
\begin{equation}\label{e:d}
\mu_{Q}=\left(1-\mathcal{N}Q\right)\alpha.
\end{equation}
 If in addition, the minimizer of $E_{Q}$ on $\T$ is $\mu_{Q}$ and has full support, then 
\begin{equation}\label{e:rEV}
E_{Q}=\int Q\,d\alpha - \frac{1}{2}\langle \mathcal{N}Q,Q \rangle=\int Q\,d\alpha-\frac{1}{2}\iint 
\left|\frac{Q(z)-Q(w)}{z-w}\right|^{2}\,\alpha(dz)\,\alpha(dw).
\end{equation}

\end{proposition}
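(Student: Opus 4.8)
The plan is to read off both assertions from Theorem~\ref{t:1} and Proposition~\ref{p:1}. For the first assertion, I would apply Theorem~\ref{t:1} with $A=1$: the unique finite signed measure satisfying $2\int\log|z-w|\,\mu(dw)=Q(z)+C$ ($\alpha$-a.e.) together with $\mu(\T)=1$ is $\mu=u\,\alpha$ with $u$ as in \eqref{e11:3}. Comparing \eqref{e11:3} at $A=1$ with the definition of $\mathcal{N}$ in \eqref{e:EN} (and using $1/i=-i$) shows $u=1-\mathcal{N}Q$; equivalently one may write $u=1+v$ with $\int v\,d\alpha=0$, kill the constant part $2\int\log|z-w|\,\alpha(dw)=0$ via Corollary~\ref{c:1}, observe that the equation for $v$ is exactly \eqref{e1:0} with $\phi=-Q$, and conclude $v=\mathcal{N}(-Q)=-\mathcal{N}Q$. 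This is \eqref{e:d}, and it also forces $C=-\int Q\,d\alpha$.

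For the second assertion I would first show that, under the full-support hypothesis, $\mu_Q$ itself solves the system \eqref{e11:0} with $A=1$. By the variational characterization \eqref{ep:var}, the identity $Q(x)=2\int\log|x-y|\,\mu_Q(dy)+C$ holds quasi-everywhere on $\supp\mu_Q=\T$; by the remark recorded after \eqref{ep:var} it then holds almost everywhere with respect to every measure of finite logarithmic energy, hence $\alpha$-a.e. and $\mu_Q$-a.e. Therefore $\mu_Q$ satisfies \eqref{e11:0} with $A=1$, and the uniqueness in Theorem~\ref{t:1} gives $\mu_Q=(1-\mathcal{N}Q)\,\alpha$ and $C=-\int Q\,d\alpha$ (the latter also follows by integrating the identity against $\alpha$ and using Corollary~\ref{c:1} with Fubini).

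It then remains to compute $E_Q=E_Q(\mu_Q)$ from \eqref{ep:lp}. Integrating the pointwise identity $2\int\log|x-y|\,\mu_Q(dy)=Q(x)+C$ against $\mu_Q(dx)$ gives $2\iint\log|x-y|\,\mu_Q(dx)\mu_Q(dy)=\int Q\,d\mu_Q+C$, whence $E_Q=\tfrac12\int Q\,d\mu_Q-\tfrac{C}{2}=\tfrac12\int Q\,d\mu_Q+\tfrac12\int Q\,d\alpha$. Substituting $\mu_Q=(1-\mathcal{N}Q)\alpha$, and using that $\mathcal{N}Q$ is real when $Q$ is (immediate from $\mathcal{N}z^{\pm n}=n z^{\pm n}$ in Proposition~\ref{p:1}(3)), one gets $\int Q\,d\mu_Q=\int Q\,d\alpha-\langle\mathcal{N}Q,Q\rangle$, so $E_Q=\int Q\,d\alpha-\tfrac12\langle\mathcal{N}Q,Q\rangle$. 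Finally \eqref{ep:9} with $\phi=\psi=Q$ rewrites $\langle\mathcal{N}Q,Q\rangle$ as $\iint\left|\tfrac{Q(z)-Q(w)}{z-w}\right|^2\alpha(dz)\alpha(dw)$, giving \eqref{e:rEV}.

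The only genuinely delicate point is the measure-theoretic bookkeeping in the middle step: promoting the ``quasi-everywhere on $\supp\mu_Q$'' statement to an honest $\alpha$-a.e. (and $\mu_Q$-a.e.) identity, so that both the uniqueness in Theorem~\ref{t:1} applies and the pointwise identity may legitimately be integrated against $\mu_Q$. This rests on the fact that $\mu_Q$ (a minimizer, hence of finite logarithmic energy) and $\alpha$ both have finite logarithmic energy, together with the remark after \eqref{ep:var}; everything else is direct substitution.
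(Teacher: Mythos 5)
Your proof is correct and follows essentially the same route as the paper: the first identity is read off from Theorem~\ref{t:1} (with $A=1$, comparing \eqref{e11:3} with the definition of $\mathcal{N}$), and \eqref{e:rEV} is obtained by substituting $\mu_{Q}=(1-\mathcal{N}Q)\,\alpha$ into the energy and invoking \eqref{ep:9}. The only differences are cosmetic: you evaluate the double logarithmic term by integrating the variational identity against $\mu_{Q}$, whereas the paper computes it as $\frac{1}{2}\langle \mathcal{E}(1-\mathcal{N}Q),(1-\mathcal{N}Q)\rangle$ using $\mathcal{E}\mathcal{N}Q=Q-\int Q\,d\alpha$, and you spell out the full-support/uniqueness step identifying $\mu_{Q}$ with $(1-\mathcal{N}Q)\,\alpha$ (and the quasi-everywhere to almost-everywhere bookkeeping), which the paper leaves implicit.
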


\begin{proof}
The only thing we need to prove here is the last item which follows from 
\[
E_{Q}=\int Qd\mu_{Q}-\iint \log|z-w|\mu_{Q}(dz)\mu_{Q}(dw),
\]
where $\mu_{Q}=\left(1-\mathcal{N}Q\right)\,\alpha$. Furthermore, 
\[
\begin{split}
E_{Q}&=\int Q\,d\alpha -\langle\mathcal{N}Q,Q \rangle +\frac{1}{2}\Big \langle \mathcal{E}\Big(1-\mathcal{N}Q\Big),\Big(1-\mathcal{N}Q\Big) \Big\rangle \\ 
&=  \int Q\,d\alpha -\langle\mathcal{N}Q,Q \rangle -\frac{1}{2} \Big\langle 
     \Big(Q -\int Qd\alpha \Big), \Big(1-\mathcal{N}Q \Big) \Big \rangle \\
&= \int Q\,d\alpha -\frac{1}{2} \, \langle \mathcal{N}Q,Q\rangle 
\end{split}
\]
which combined with \eqref{ep:9} gives \eqref{e:rEV}

\end{proof}

We now discuss the behavior property of $E_{Q}$, when the potential $Q$ is perturbed.  

\begin{theorem}\label{t:2}
Assume that $Q$ is of class $C^{3}$ and $\mu_{Q}=u\,\alpha$ with $\inf_{z\in\T}u(z)>0$.   Then for any $C^{3}$ functions $f,g:\T\to\R$, 
\begin{equation}\label{ep:100}
\mu_{Q+tf+t^{2}g}=\mu_{Q}-t\mathcal{N}f\,\alpha-t^{2}\mathcal{N}g\,\alpha.
\end{equation}
taken in the sense of distributions.  In addition,
\begin{equation}\label{ep:101}
E_{Q+tf+t^{2}g}=E_{Q}+t\int f\,d\mu_{Q}+t^{2}\left(\int g\,d\mu_{Q} - \frac{1}{2}\langle\mathcal{N}f,f\rangle \right)+o(t^{2}).
\end{equation}
\end{theorem}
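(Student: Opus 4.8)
The plan is to exploit the explicit linear formula for the equilibrium density furnished by Theorem~\ref{t:1}, together with the hypothesis $\inf_{z}u(z)>0$, which guarantees that for $t$ small the perturbed potential $Q+tf+t^2g$ still has an equilibrium measure with positive density and full support, so that the formula $\mu=(1-\mathcal{N}Q)\alpha$ of \eqref{e:d} applies verbatim to the perturbed potential. First I would observe that the map $Q\mapsto\mathcal{N}Q$ is linear, so that
\[
\mu_{Q+tf+t^2g}=\bigl(1-\mathcal{N}(Q+tf+t^2g)\bigr)\alpha=\bigl(1-\mathcal{N}Q-t\mathcal{N}f-t^2\mathcal{N}g\bigr)\alpha=\mu_Q-t\,\mathcal{N}f\,\alpha-t^2\,\mathcal{N}g\,\alpha,
\]
which is \eqref{ep:100}; here one only needs that $f,g$ being $C^2$ makes $\mathcal{N}f,\mathcal{N}g$ well-defined $C^0$ functions (Proposition~\ref{p:1}(1) gives more), so the identity is exact, not merely asymptotic, as long as the density stays positive.

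Next I would turn to the energy expansion \eqref{ep:101}. Using \eqref{e:rEV} applied to the potential $\widetilde Q:=Q+tf+t^2g$ (valid once we know $\mu_{\widetilde Q}$ has full support, which holds for $|t|$ small by the positivity argument above), we get
\[
E_{\widetilde Q}=\int\widetilde Q\,d\alpha-\tfrac12\langle\mathcal{N}\widetilde Q,\widetilde Q\rangle.
\]
Expanding $\widetilde Q=Q+tf+t^2g$ and using bilinearity of $\langle\mathcal{N}\cdot,\cdot\rangle$ together with its symmetry $\langle\mathcal{N}f,g\rangle=\langle f,\mathcal{N}g\rangle$ (Proposition~\ref{p:1}(2)), the quadratic form produces the terms
\[
\langle\mathcal{N}Q,Q\rangle+2t\langle\mathcal{N}Q,f\rangle+t^2\bigl(\langle\mathcal{N}f,f\rangle+2\langle\mathcal{N}Q,g\rangle\bigr)+o(t^2).
\]
Collecting powers of $t$: the constant term is $\int Q\,d\alpha-\tfrac12\langle\mathcal{N}Q,Q\rangle=E_Q$; the coefficient of $t$ is $\int f\,d\alpha-\langle\mathcal{N}Q,f\rangle$, which by \eqref{e11:4} (with $\phi=f$, $A=1$, i.e. the defining formula $\int f\,d\mu_Q=\int f\,d\alpha-\langle\mathcal{N}Q,f\rangle$, since $\mathcal{N}Q$ is exactly $1-u$ up to the mean) equals $\int f\,d\mu_Q$; and the coefficient of $t^2$ is $\int g\,d\alpha-\langle\mathcal{N}Q,g\rangle-\tfrac12\langle\mathcal{N}f,f\rangle=\int g\,d\mu_Q-\tfrac12\langle\mathcal{N}f,f\rangle$, giving precisely \eqref{ep:101}. (One should double-check the identification $\int\phi\,d\mu_Q=\int\phi\,d\alpha-\langle\mathcal{N}Q,\phi\rangle$ by comparing \eqref{e11:4} with the definition of $\mathcal{N}$ in \eqref{e:EN}; this is where the symmetry $\langle\mathcal{N}Q,\phi\rangle=\langle\mathcal{N}\phi,Q\rangle$ gets used to match the iterated integral in \eqref{e11:4}.)

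The main obstacle I anticipate is not the algebra but the justification that \eqref{e:rEV} is applicable to the perturbed potential—i.e. that $\mu_{Q+tf+t^2g}$ genuinely has full support for all small $t$. This requires an argument that $\inf_z u(z)>0$ together with the uniform (in $z$, and locally uniform in $t$) bound on $\mathcal{N}f$ and $\mathcal{N}g$ coming from the $C^2$-norms of $f$ and $g$ forces $1-\mathcal{N}(Q+tf+t^2g)$ to remain bounded below by a positive constant; once positivity of the candidate density is in hand, the variational characterization \eqref{ep:var2} combined with Corollary~\ref{c:1} identifies it as the true equilibrium density, and the full-support hypothesis of the proposition giving \eqref{e:rEV} is met. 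The $o(t^2)$ error in \eqref{ep:101} then comes solely from truncating the (exact, in $t$) expression $E_{\widetilde Q}=\int\widetilde Q\,d\alpha-\tfrac12\langle\mathcal{N}\widetilde Q,\widetilde Q\rangle$, which is in fact a polynomial of degree $2$ in $t$ plus the genuinely degree-$2$ contribution—so the remainder is actually $O(t^3)$ coming from the $\langle\mathcal{N}f,g\rangle$ and $\langle\mathcal{N}g,g\rangle$ cross terms; I would simply record it as $o(t^2)$ as stated.
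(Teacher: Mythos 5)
Your route is exactly the paper's: the paper's own proof is only a two-line sketch --- positivity of $1-\mathcal{N}Q$ plus linearity of $\mathcal{N}$ gives \eqref{ep:100} through \eqref{e:d}, and \eqref{ep:101} is to be read off from the representation \eqref{e:rEV} --- and you fill in the details correctly, including the identification of the perturbed density as the genuine equilibrium measure (uniqueness in Theorem~\ref{t:1} plus the variational characterization) and the identity $\int\phi\,d\mu_Q=\int\phi\,d\alpha-\langle\mathcal{N}Q,\phi\rangle$ used to recognize the first-order coefficient.

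There is, however, one point you must not gloss over. Your expansion of $E_{\widetilde Q}=\int\widetilde Q\,d\alpha-\tfrac12\langle\mathcal{N}\widetilde Q,\widetilde Q\rangle$ gives
\[
E_{Q+tf+t^{2}g}=E_{Q}+t\int f\,d\mu_{Q}+t^{2}\Bigl(\int g\,d\mu_{Q}-\tfrac12\langle\mathcal{N}f,f\rangle\Bigr)+O(t^{3}),
\]
that is, the $t^{2}$ term enters with a plus sign in front of the parenthesis, whereas \eqref{ep:101} as printed has $-t^{2}\bigl(\int g\,d\mu_{Q}-\tfrac12\langle\mathcal{N}f,f\rangle\bigr)$. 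You claim your coefficient ``gives precisely \eqref{ep:101}''; it does not --- the two differ by a sign, so as written you conclude a statement you did not derive. In fact your sign is the correct one: $E_{Q}$ is an infimum over $\mu$ of functionals affine in $Q$, hence concave in $Q$, so for $g=0$ the $t^{2}$ coefficient must be $-\tfrac12\langle\mathcal{N}f,f\rangle\le 0$, exactly as in your computation; it is also the version needed to reproduce the displayed expansion in the paper's proof that $T(\rho)$ implies $P(\rho)$, once the expansion of $\mathcal{U}^{t\lambda}_{1}(tf)$ there is taken with its correct sign, $tf-t^{2}(f'-\lambda)^{2}/4+o(t^{2})$. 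So \eqref{ep:101} contains a sign slip; your derivation is right, but the honest conclusion of your argument is the corrected formula above, and you should say so explicitly rather than assert literal agreement with the printed statement.
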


\begin{proof}
Since, $u(z)=1-\mathcal{N}Q(z)>0$ for all $z\in\T$ and the fact that $u$ is a continuous function (see Proposition~\ref{p:1} (1)), it follows that for small $t$ and all $z\in\T$, it is also true that 
\[
1-\mathcal{N}Q(z)-t\mathcal{N}f(z)-t^{2}\mathcal{N}g(z)>0
\]
for all $z\in\T$.  Now looking at the densities as described by \eqref{e:d}, it is easy to show \eqref{ep:100}.   

For the second part, use the representation from \eqref{e:rEV} and straightforward computations combined with \eqref{e:d}.  \qedhere
\end{proof}

\section{Poincar\'e Inequality}\label{S:4}

\subsection{General Properties}

Now we have introduced all ingredients for the statement and elementary properties of the Poincar\'e's inequality.  We start with the definition first.  

\begin{definition}
We say that a probability 
measure $\mu$ on  $\T$ satisfies the free Poincar\'e inequality with constant $\rho>0$, denoted $P (\rho)$, if
\begin{equation}\label{e1:1}
 2\rho \iint\left| \frac{f(z)-f(w)}{z-w}  \right|^{2}\, \alpha(dz)\,\alpha(dw)\le \Var_{\mu}(f'):= \int |f'|^{2}d\mu-\left| \int f'\,d\mu\right|^2
\end{equation}
holds for any smooth $f:\T\to\C$.
\end{definition}

The expression on the left-hand side appears also in \cite{MR1828463} in relation to fluctuations of functions of random matrices on the unitary group.  One can argue as in \cite{LP} that using Poincar\'e's inequality on large matrices, one can deduce some version of the free inequality for some sufficiently regular functions.  

Notice that, as opposed to the classical Poincar\'e where the left-hand side depends on the measure $\mu$ through its variance, in this case the left-hand side is independent of the measure $\mu$.  Also in the classical case the left hand side is the $L^{2}$ norm of the projection onto orthogonal to constants, while here this is some sort of $L^{2}$ norm of the non-commutative derivative $\frac{f(z)-f(w)}{z-w}$.  

In the case of $\mu=\alpha$, since $\int f'\,d\alpha=0$ for any smooth $f$, the free Poincar\'e inequality is equivalent to 
\[
2\rho \iint\left| \frac{f(z)-f(w)}{z-w}  \right|^{2}\, \alpha(dz)\,\alpha(dw)\le  \int |f'|^{2}d\alpha.
\]

A few elementary properties are collected in the following statement.  

\begin{proposition}\label{p:2}  Assume $\mu$ satisfies $P(\rho)$.   The following are true.  
\begin{enumerate}

\item $\mu$ has support $\T$. Moreover, if $d\mu=w\,dz$, with $w\in C^{2}(\T)$, then $ w(z)>0$ for all $z\in\T$.  
\item The constant $\rho$ in \eqref{e1:1} satisfies 
\[ 
\rho\le \frac{1}{2}
\] 
with equality if and only if $\mu=\alpha$. 

\item For  any $C^{1}$ function $f:\T\to\C$, 
\begin{equation}\label{e1:3}
\Var_{\alpha}(f)\le  \iint\left|\frac{f(z)-f(w)}{z-w}  \right|^{2}\,\alpha(dz)\,\alpha(dw),
\end{equation}
with $\Var_{\nu}(f)=\int |f|^{2}\,d\nu-\left|\int f\,d\nu\right|^{2}$ for any measure $\nu$.   
In fact, this inequality is equivalent to  $P(1/2)$ for the Haar measure $\alpha$ with equality in \eqref{e1:3} or \eqref{e1:1} being attained only for $f(z)=az$, $a\in\C$.  

\item If $\mu=w\,\alpha$, with $w\ge \rho$ on $\T$, then $\mu$ satisfies $P(\frac{\rho^2}{2})$.  
\end{enumerate}
\end{proposition}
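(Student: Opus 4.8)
\emph{Plan.} I would prove the four items in the order (3), (1), (4), (2), each later one relying on the earlier ones. For (3): expand $f=\sum_{n\in\Z}a_nz^n$ in the orthonormal basis $\{z^n\}$ of $L^2(\alpha)$. By \eqref{ep:9} and the eigenrelations $\mathcal{N}z^{\pm n}=nz^{\pm n}$ of Proposition~\ref{p:1},
\[
\iint\left|\frac{f(z)-f(w)}{z-w}\right|^{2}\alpha(dz)\,\alpha(dw)=\langle\mathcal{N}f,f\rangle=\sum_{n\neq0}|n|\,|a_n|^{2},
\]
while $\Var_{\alpha}(f)=\sum_{n\neq0}|a_n|^{2}$ and $\int|f'|^{2}\,d\alpha=\sum_{n}n^{2}|a_n|^{2}$. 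Since $1\le|n|\le n^{2}$ for $n\neq0$, both \eqref{e1:3} and $P(1/2)$ for $\alpha$ fall out immediately, with equality exactly when $a_n=0$ for all $|n|\ge2$ (which includes $f(z)=az$).

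For (1): if $\supp\mu\neq\T$, choose a non-constant smooth $f$ supported in an open arc disjoint from $\supp\mu$; then the right side of \eqref{e1:1} vanishes (as $f'$ is supported there too), while the left side equals $2\rho\iint|\tfrac{f(z)-f(w)}{z-w}|^{2}\,\alpha(dz)\alpha(dw)\ge2\rho\,\Var_{\alpha}(f)>0$ by (3), a contradiction. For positivity of a $C^{2}$ density: if $w(z_0)=0$ then $w\ge0$ and $w\in C^{2}$ force $w'(z_0)=0$ and $w(z_0e^{it})\le Kt^{2}$ near $t=0$. Fixing a non-constant smooth bump $\psi$ on $\R$ and setting $f_{\varepsilon}(z_0e^{it})=\psi(t/\varepsilon)$ (extended by $0$, smooth on $\T$ for small $\varepsilon$), a change of variables gives $\int|f_{\varepsilon}'|^{2}\,d\mu=O(\varepsilon)$, so the right side of \eqref{e1:1} is $O(\varepsilon)$; whereas restricting the left-side double integral to the support arc and rescaling shows it stays bounded below by a positive constant depending only on $\psi$. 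Letting $\varepsilon\to0$ contradicts \eqref{e1:1}.

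For (4): using the variational description of variance and $w\ge\rho\ge0$,
\[
\Var_{\mu}(f')=\min_{c\in\C}\int|f'-c|^{2}\,d\mu\ \ge\ \rho\min_{c\in\C}\int|f'-c|^{2}\,d\alpha=\rho\,\Var_{\alpha}(f')=\rho\int|f'|^{2}\,d\alpha,
\]
using \eqref{e:p:0}; combined with $P(1/2)$ for $\alpha$ from (3), this yields $\Var_{\mu}(f')\ge\rho\iint|\tfrac{f(z)-f(w)}{z-w}|^{2}\,\alpha(dz)\alpha(dw)$, i.e. $\mu$ satisfies $P(\rho/2)$, and since $\int w\,d\alpha=1$ forces $\rho\le1$, a fortiori $P(\rho^{2}/2)$. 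For (2): write $c_n=\int z^{n}\,d\mu$, so $c_0=1$ and $c_{-n}=\overline{c_n}$. Testing \eqref{e1:1} with $f(z)=z$ gives $2\rho\le1-|c_1|^{2}\le1$, whence $\rho\le1/2$, and if $\rho=1/2$ then $c_1=0$. For the equality case I would test \eqref{e1:1} (with $\rho=1/2$) against $f=a_1z+a_qz^{q}$ for $q\in\Z\setminus\{0,1\}$ and arbitrary $a_1,a_q\in\C$; the cross terms on the left vanish and the inequality is equivalent to positive semidefiniteness of
\[
\begin{pmatrix} -|c_1|^{2} & q\,(c_{1-q}-c_1\overline{c_q})\\ \overline{q\,(c_{1-q}-c_1\overline{c_q})} & q^{2}(1-|c_q|^{2})-|q| \end{pmatrix}.
\]
The $(1,1)$ entry being $\ge0$ forces $c_1=0$; then, since a zero diagonal entry of a positive semidefinite matrix annihilates its row, $q\,c_{1-q}=0$, i.e. $c_{1-q}=0$. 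As $q$ runs over $\Z\setminus\{0,1\}$ this gives $c_n=0$ for all $n\neq0$, so $\mu=\alpha$ (a finite measure on $\T$ is determined by its Fourier coefficients, which by \eqref{e:p:1} agree with those of $\alpha$); conversely $\alpha$ satisfies $P(1/2)$ by (3).

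The main obstacle is item (1): rigging the test functions $f_{\varepsilon}$ so that the right-hand side of \eqref{e1:1} becomes negligible at a putative zero of the density while its left-hand side remains uniformly positive. Choosing the right two-parameter family $a_1z+a_qz^{q}$ in item (2) is the other place where a little care is needed; items (3) and (4) are then just the Fourier computation and a variance manipulation.
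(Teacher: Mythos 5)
Your proposal is correct in all four items, and in two of them it takes a genuinely different route from the paper, so a comparison is worthwhile. Items (1) and (3) are essentially the paper's arguments: the support statement via a bump function in a gap of $\supp\mu$, and (3) as the spectral gap of $\mathcal{N}$ made explicit in Fourier coefficients (the paper phrases it as $\mathcal{N}\le\mathcal{N}^{2}$); note that you additionally supply a proof of the positivity of a $C^{2}$ density, which the paper does not spell out, and your scaling argument is sound ($\int|f_{\varepsilon}'|^{2}d\mu=O(\varepsilon)$ from the quadratic vanishing of $w$ at $z_{0}$, while $|e^{it}-e^{is}|\le|t-s|$ keeps the rescaled left-hand side bounded below by $\frac{1}{4\pi^{2}}\iint|\psi(u)-\psi(v)|^{2}|u-v|^{-2}\,du\,dv>0$). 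In item (2) the paper, after getting $\rho\le 1/2$ from $f(z)=z$ exactly as you do, tests with $f=g+rz$ for arbitrary smooth $g$ and real $r$, and uses the operators $\mathcal{N},\mathcal{L}$ to deduce $\int g'\bar z\,d\mu=\int g'\bar z\,d\alpha$ for all $g$, hence $\mu=\alpha$; your two-frequency tests $a_{1}z+a_{q}z^{q}$ with the $2\times2$ positive semidefiniteness argument instead kill the coefficients $c_{1-q}$ one at a time (using that a zero diagonal entry forces its row to vanish) and reach the same conclusion more explicitly, trading the operator identities for elementary matrix bookkeeping. In item (4) the paper symmetrizes, $\int|f'|^{2}d\alpha=\frac{1}{2}\iint|f'(z)-f'(w)|^{2}\alpha(dz)\alpha(dw)\le\frac{1}{2\rho^{2}}\iint|f'(z)-f'(w)|^{2}\mu(dz)\mu(dw)=\frac{1}{\rho^{2}}\Var_{\mu}(f')$, which costs two factors of $\rho$ and gives exactly $P(\rho^{2}/2)$; your variational bound $\Var_{\mu}(f')=\min_{c}\int|f'-c|^{2}d\mu\ge\rho\Var_{\alpha}(f')=\rho\int|f'|^{2}d\alpha$ costs only one factor and yields the stronger $P(\rho/2)$, from which the stated $P(\rho^{2}/2)$ follows since $\rho\le\int w\,d\alpha=1$. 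Finally, your equality characterization in (3) ($a_{n}=0$ for all $|n|\ge2$) is in fact the accurate one: $f(z)=az^{-1}$, and additive constants, also saturate both \eqref{e1:3} and $P(1/2)$ for $\alpha$, so the assertion that equality holds ``only for $f(z)=az$'' is stated too narrowly in the paper.
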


\begin{proof}
\begin{enumerate}
\item Assume by contradiction that there is an open set $J\subset\T$ such that $\mu(J)=0$.  Take now, an open set $J_{2}$ such that $\bar{J}_{2}\subset J$ and  pick a smooth function $0\le f\le 1$ which is $0$ outside $J$ and 1 on $J_{2}$.   Apply now Poincar\'e to get a contradiction.  

\item The argument here follows the same line as in \cite{Biane2}.   Apply \eqref{e1:1} for $f(z)=z$ to show that $2\rho\le1$.  Now, if $\rho=1/2$, then, apply \eqref{e1:1} for $f=g+rz$ and thus conclude that 
\[
\begin{split}
\iint\left| \frac{g(z)-g(w)}{z-w}  \right|^{2}\,\alpha(dz)\,\alpha(dw)& +2r\iint \Re\left(\frac{g(z)-g(w)}{z-w}\right)\,\alpha(dz)\,\alpha(dw)+r^{2}  \\ 
&\le \int |g'|^{2} d\mu-2r\int \Re(ig'(z)\bar{z})\mu(dz)+r^{2}.
\end{split}
\] 
Since this is true for any $r\in\R$, it yields 
\[
\iint \Re\left(\frac{g(z)-g(w)}{z-w}\right)\,\alpha(dz)\,\alpha(dw)=-\int \Re(ig'(z)\bar{z})\mu(dz)
\]
which in turn gives 
\[
\iint \frac{g(z)-g(w)}{z-w}\,\alpha(dz)\,\alpha(dw)=-\int ig'(z)\bar{z}\mu(dz).
\]
Now, using the operator $\mathcal{N}$, the left hand side is precisely $\frac{1}{2}\langle \mathcal{N}g,z\rangle=\frac{1}{2}\langle \mathcal{N}g,\mathcal{N}z\rangle=\frac{1}{2}\langle\mathcal{L}g,z \rangle=-\int ig'(z)\bar{z}\,\alpha(dz)$, and thus we can reinterpreted the equality above as $\int g'(z)\bar{z}\,\mu(dz)=\int g'(z)\bar{z}\,\alpha(dz)$ for any smooth function $g$, which is equivalent to $\mu=\alpha$.
\item This is a simple application of the fact that the Poincar\'e in this case is translated into $\mathcal{N}\le\mathcal{N}^{2}$, which is equivalent to the spectral gap of $\mathcal{N}$, a consequence of \eqref{ep:10:n} of Proposition \ref{p:1}.  
\item This is immediate from the following sequence of inequalitites
\[
\begin{split}
\iint &\left| \frac{f(z)-f(w)}{z-w}\right|^{2}\, \alpha(dz)\,\alpha(dw)\le \int |f'(z)|^2\alpha(dz) = \frac{1}{2}\iint |f'(z)-f'(w)|^{2}\alpha(dz)\alpha(dw)\\ 
&\le \frac{1}{2\rho^2}\iint |f'(z)-f'(w)|^2\mu(dz)\mu(dw)=\frac{1}{\rho^2}\Var_{\mu}(f')
\end{split}
\]
which is $P(\rho^2/2)$ for $\mu$. \qedhere
\end{enumerate}

\end{proof}

\section{Houdr\'e-Kagan type refinements} \label{S:5}   

In this section we want to give some refinements for the Poincar\'e inequality in the case of the Haar measure $\alpha$.  To do this, we use the idea from \cite{ioBrasc}.  The starting point is the following statement.  

\begin{proposition} For any $C^{3}$ functions $\phi,\psi$ on $\T$ it holds that
\begin{equation}\label{e:pe:0}
\langle \mathcal{N}\phi,\psi \rangle = \langle \mathcal{E}\phi',\psi' \rangle .
\end{equation}
\end{proposition}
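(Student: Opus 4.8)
The identity to prove is $\langle \mathcal{N}\phi,\psi\rangle = \langle \mathcal{E}\phi',\psi'\rangle$ for smooth $\phi,\psi$ on $\T$. The plan is to verify it on the orthonormal basis $\{z^n\}_{n\in\Z}$ of $L^2(\alpha)$ and then extend by bilinearity and continuity. Since both sides are sesquilinear in $(\phi,\psi)$ and continuous in each variable with respect to the $C^1$ topology (the operators $\mathcal{N}$ and $\mathcal{E}$ map $C^2$ to $C^2$ by Proposition~\ref{p:1}, and differentiation is continuous on smooth functions), it suffices to check the case $\phi(z)=z^n$, $\psi(z)=z^m$ for $n,m\in\Z$.

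First I would compute the right-hand side. We have $\phi'(z)=\dfrac{d}{dt}\big|_{t=0}(ze^{it})^n = in\,z^n$, and similarly $\psi'(z)=im\,z^m$. By Proposition~\ref{p:1}(3), $\mathcal{E}$ is diagonal in this basis: $\mathcal{E}z^{\pm k}=\frac1k z^{\pm k}$ for $k\ge 1$ and $\mathcal{E}1=0$. Hence for $n\neq 0$, $\mathcal{E}\phi' = \mathcal{E}(in\,z^n) = in\cdot\frac{1}{|n|}z^n = i\,\mathrm{sign}(n)\,z^n$, while $\mathcal{E}\phi'=0$ when $n=0$. Then $\langle \mathcal{E}\phi',\psi'\rangle = \langle i\,\mathrm{sign}(n)z^n,\, im\,z^m\rangle = \mathrm{sign}(n)\,\overline{m}\cdot\langle z^n,z^m\rangle = |n|\,\delta_{nm}$ (using $\mathrm{sign}(n)\cdot n = |n|$ and that the inner product with $\alpha$ picks out $n=m$), and this equals $0$ when $n=m=0$.

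Next I would compute the left-hand side. By Proposition~\ref{p:1}(3), $\mathcal{N}z^{\pm n} = n\,z^{\pm n}$ for $n\ge 0$, i.e.\ $\mathcal{N}z^n = |n|\,z^n$ for all $n\in\Z$. Therefore $\langle \mathcal{N}\phi,\psi\rangle = \langle |n|z^n, z^m\rangle = |n|\,\delta_{nm}$, matching the right-hand side exactly, including the vanishing at $n=m=0$. This establishes \eqref{e:pe:0} on monomials; extending by linearity over finite sums and then passing to the limit for general smooth $\phi,\psi$ (whose Fourier series converge in $C^1$, indeed rapidly) completes the argument. I do not expect a genuine obstacle here: the only point requiring a little care is justifying the density/continuity step so that the basis computation actually propagates to all smooth functions, but this is routine given the regularity statements in Proposition~\ref{p:1} and the smoothness hypothesis. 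Alternatively, one could give a direct proof by writing $\langle\mathcal{N}\phi,\psi\rangle$ via the double-integral formula \eqref{ep:9}, namely $\langle\mathcal{N}\phi,\psi\rangle = \iint \frac{(\phi(z)-\phi(w))\overline{(\psi(z)-\psi(w))}}{|z-w|^2}\,\alpha(dz)\alpha(dw)$, and comparing with the kernel $-2\log|z-w|$ defining $\mathcal{E}$ after an integration by parts in the angular variable; this avoids Fourier expansions but is computationally heavier, so I would present the basis argument as the main proof.
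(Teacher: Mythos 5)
Your proposal is correct and follows essentially the same route as the paper: verify the identity on the monomials $z^{n}$ using the eigenvalue relations $\mathcal{N}z^{\pm n}=n z^{\pm n}$ and $\mathcal{E}z^{\pm n}=\tfrac{1}{n}z^{\pm n}$ (with both sides vanishing when $n=0$), then extend by linearity and a routine density argument. The extra care you take with the continuity step is fine but not a substantive difference.
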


\begin{proof}
It suffices to check this for the case of $\phi(z)=z^{n}$ and $\psi(z)=z^{m}$ with $n,m$ an integer numbers.  If $n=0$, both sides are $0$ and the equality is trivial.  If $n\ne0$, then, since $\phi'=in\phi$ and $\psi'=im \psi$ combined with $\mathcal{N}\phi=|n|\phi$ and $\mathcal{E}\phi=\frac{1}{|n|}\phi$, the identity is immediately checked. \qedhere 
\end{proof}

In particular, since $\mathcal{E}$ is a contraction on $L^{2}(\T)$, we obtain another proof of the Poincar\'e inequality $P(1/2)$ for $\alpha$.  

What we want to do now is to refine this argument and for this matter we follow a similar argument to the one exposed in \cite{IP3}.  The main ingredient in this is to write the operator $\mathcal{N}-I$ as $(\partial_{c})^* \partial_{c}$, where $\partial_{c}$ is going to be defined below.  Before we do that we recall the definition of the non-commutative derivative introduced by Voiculescu.  It is defined on polynomials in $z$ and $z^{-1}$ over $\T$ by the rule
\[
 \partial (P Q)=(\partial P)(1\otimes Q)+(P\otimes 1)(\partial Q).
\]
Alternatively, this can be defined on monomials $z^n$ for integer values of $n$ by 
\[
\partial(z^n)=\sum_{i=0}^{n-1}z^{i}\otimes z^{n-1-i}
\]
and then extended by linearity to all polynomials.  We need to adjust this a little bit for our purpose by setting the following
\[
\partial_{c}(z^{n})=
\begin{cases} 
0 & n=0 \\
\sum_{i=1}^{n-1} z^{i}\otimes z^{n-i} & n\ge1 \\
\sum_{i=1}^{n-1} z^{-i}\otimes z^{n+i} & n\le -1,
\end{cases} 
\]
and its obvious extension by linearity to all polynomials.  To some extent, this resembles the same flavor as the derivative $\partial$, with the adjustment that the derivative of $z^{n}$ on the circle (according to our definition) is not $inz^{n-1}$ but $inz^{n}$.   If we insist, we can relate $\partial_{c}$ to $\partial$ as follows.  First, set $\mathcal{P}^{+}$ to be the set of polynomials in $z$ with no free coefficient and similarly set $\mathcal{P}^{-}$ to be the set of polynomials in $z^{-1}$ with no free coefficient.   Then, 
\[
\partial_{c}(f)=\begin{cases} \partial(z^{\pm 1}f)-f\otimes 1-1\otimes f & f\in \mathcal{P}^{\pm} \\
0 & f\equiv \text{const}.  
\end{cases}
\]

Now, let's set 
\[
 \mathcal{M}=\mathcal{N}-I.
\]
This is a non-negative operator on $L^2_0(\alpha)$.  Notice here that \eqref{ep:9} is precisely the following statement
\[
 \langle \mathcal{N}\phi ,\psi\rangle=\langle \partial \phi,\partial \psi \rangle_{\alpha\otimes \alpha}.
\]
Indeed, this is so because 
\[
 \partial(z^n)=\sum_{k=0}^{n-1}z^k\otimes z^{n-1-k}.
\]
Now, we want a similar representation for $\mathcal{M}$ instead of $\mathcal{N}$ and this is the reason for introducing $\partial_{c}$ for which a simple calculation reveals that for any polynomials $\phi,\psi\in L^{2}_{0}(\alpha)$ in $z$ and $z^{-1}$, 
\begin{equation}\label{e:pe:8}
 \langle \mathcal{M}\phi,\psi \rangle=\langle \partial_c \phi,\partial_c \psi \rangle_{\alpha\otimes \alpha}.
\end{equation}

Now, observe that for any polynomials $\phi,\psi$ in $z$ and $z^{-1}$, we have $\phi',\psi'\in L^{2}_{0}(\alpha)$ and then, 
\[
 \langle \mathcal{E}\phi',\psi' \rangle=\langle (I+\mathcal{M})^{-1}\phi',\psi' \rangle=\langle \phi',\psi' \rangle-\langle \mathcal{M}(I+\mathcal{M})^{-1}\phi',\psi' \rangle.
\]
Furthermore, the key now is to write 
\begin{equation}\label{e:pe:1}
\langle \mathcal{M}(I+\mathcal{M})^{-1}\phi',\psi' \rangle=\langle \partial_c(I+\mathcal{M})^{-1}\phi',\partial_c\psi' \rangle .
\end{equation}

The next step is to commute the operators $\partial_c$ with $(I+\mathcal{M})^{-1}$.  To do so, we need to commute $\mathcal{M}$ with $\partial_c$.  The first obstacle here is that $\mathcal{M}$ sends functions into functions,  while $\partial_c$ sends functions into tensors.  Thus we need to extend the operator $\mathcal{M}$ to tensors.  We do this by the following recipe: 
\[
 \mathcal{M}^{(2)}(P\otimes Q)=(\mathcal{M}P)\otimes Q+P\otimes (\mathcal{M}Q).  
\]
With this definition we claim that 
\begin{equation}\label{e:ep:2}
 \partial_c\mathcal{M}=(\mathcal{M}^{(2)}+I)\partial_c.
\end{equation}
Indeed, it is sufficient to check this on monomials $\phi(z)=z^n$. For $n=-1,0,1$, both sides are equal to $0$.  For $n\ge2$, $\mathcal{M}\phi=(n-1)\phi$, while $\partial_c\phi=\sum_{i=1}^{n-1}z^i\otimes z^{n-i}$ and $\mathcal{M}^{(2)}(z^i\otimes z^{n-i})=(n-2)z^i\otimes z^{n-i}$ which proves the claim.  In the case $n\le -2$, $\mathcal{M}\phi=(-n-1)\phi$ while $\partial_c\phi=\sum_{i=1}^{-n+1}z^{-i}\otimes z^{n+i}$ and $\mathcal{M}^{(2)}(z^{-i}\otimes z^{n+i})=(-n-2)z^i\otimes z^{n+i}$ which again confirms the identity. 

With these considerations, we now see that 
\[
\partial_c(\mathcal{M}+I)=(\mathcal{M}^{(2)}+2I)\partial_c,
\]
and thus 
\[
 (\mathcal{M}^{(2)}+2I)^{-1}\partial_c=\partial_c(\mathcal{M}+I)^{-1}.
\]
Therefore, if go back to \eqref{e:pe:1}, we continue
\[
 \langle \mathcal{M}(\mathcal{M}+I)^{-1}\phi',\psi' \rangle=\langle (\mathcal{M}^{(2)}+2I)^{-1}\partial_c\phi',\partial_c\psi' \rangle.
\]

Now we repeat this by writing 
\[
 \langle (\mathcal{M}^{(2)}+2I)^{-1}\partial_c\phi',\partial_c\psi' \rangle =\frac{1}{2}\langle \partial_c\phi',\partial_c\psi' \rangle-\frac{1}{2}\langle \mathcal{M}^{(2)}(\mathcal{M}^{(2)}+2I)^{-1}\partial_c\phi',\partial_c\psi' \rangle.
\]

Next, we want to extend this to higher orders.  To do this, we define inductively the operators $\mathcal{M}^{(k)}$ by the rule
\[
 \mathcal{M}^{(k)}(P\otimes Q)=(\mathcal{M}^{(k-1)}P)\otimes Q+P\otimes (\mathcal{M}Q)
\]
for any $(k-1)$ tensor $P$ and any $1$ tensor $Q$.   In addition to these, we define higher versions of $\partial_{c}$  by 
\[
 \partial_c^{(k)}=(\partial_c\otimes I^{(k)})\partial_c^{(k-1)}.
\]

The main commutation relation is that
\begin{equation}\label{e:ep:3}
 (\partial_c \otimes I^{(k-1)})\mathcal{M}^{(k)}=(\mathcal{M}^{(k+1)}+I)(\partial_c
\otimes I^{(k-1)}).
\end{equation}

To see why this is so, we use induction.  For $k=1$, this is exactly \eqref{e:ep:2}.   If we assume the statement to be true for $k\ge1$, then, by the definition of $\mathcal{M}^{(k)}$, we can write 
\[
\begin{split}
 (\partial_c \otimes I^{(k-1)})\mathcal{M}^{(k)}&=\partial_c\otimes I^{(k-1)} (\mathcal{M}^{(k-1)}\otimes I+I^{(k-1)}\otimes \mathcal{M})=((\partial_c\otimes I^{(k-2)})\mathcal{M}^{(k-1)})\otimes I +\partial_c\otimes I^{(k-2)}\otimes \mathcal{M} \\ 
 &=((\mathcal{M}^{(k)}+I)(\partial_c\otimes I^{(k-1)}) +\partial_c\otimes I^{(k-2)}\otimes \mathcal{M} \\ 
 & = (\mathcal{M}^{(k)}\otimes I+ I^{(k)}\otimes\mathcal{M})\partial_c\otimes I^{(k-1)}  +\partial_c\otimes I^{(k-1)} = (\mathcal{M}^{(k+1)}+I) \partial_c\otimes I^{(k-1)}
 \end{split}
\]
which is the proof of \eqref{e:ep:3}.  

Notice now that from  \eqref{e:ep:3} we get 
\begin{equation}\label{e:ep:6}
 (\partial_c \otimes I^{(k-1)})(\mathcal{M}^{(k)}+kI)=(\mathcal{M}^{(k+1)}+(k+1)I)(\partial_c
\otimes I^{(k-1)}).
\end{equation}

The next fact is that 
\begin{equation}\label{e:ep:4}
 \langle \mathcal{M}^{(k)}\partial_c^{(k-1)}\phi,\partial_c^{(k-1)} \psi\rangle = k\langle \partial_c^{(k)}\phi,\partial_c^{(k)}\psi \rangle.
\end{equation}

This can be proved starting with \eqref{e:ep:3} written in the form
\begin{equation}\label{5.7bis}
 \mathcal{M}^{(k)}(\partial_c\otimes I^{(k-2)})=(\partial_c\otimes I^{(k-2)})(\mathcal{M}^{(k-1)}-I),
\end{equation}
which multiplied on the right by $\partial_c^{(k-1)}$ and used iteratively yields 
\[
 \mathcal{M}^{(k)}\partial_c^{(k-1)}=\partial_c^{(k-1)}(\mathcal{M}-k+1).
\]
Thus, checking \eqref{e:ep:4} becomes now equivalent to 
\[
 \langle \partial_c^{(k-1)}(\mathcal{M}-k+1)\phi,\partial_c^{(k-1)}\psi \rangle = k\langle \partial_c^{(k)}\phi,\partial_c^{(k)}\psi \rangle.
\]
It is now sufficient to do this for the case $\phi(z)=\phi_n(z)=z^n$ and $\psi(z)=\psi_m(z)=z^m$ with $m,n$ integer numbers.  Since $\mathcal{M}\phi_n=(|n|-1)\phi_n$, in the case of $m$ and $n$ have different signs, both sides above are 0.   Thus, it suffices to consider the case when both are positive or both are negative.  Since the case of negative values is similar, we treat only the case of $m,n$ both positive.  In this case, the equality to be proven becomes   
\[
(n-k)\langle \partial_c^{(k-1)}\phi_n,\partial_c^{(k-1)}\psi_n \rangle = k\langle \partial_c^{(k)}\phi_n,\partial_c^{(k)}\psi_n \rangle.
\]
It is now a straightforward exercise to show by induction that 
\[
 \partial_c^{(k)}\phi_n=\sum_{\substack{i_1+i_2+\dots +i_{k+1}=n\\ i_{1},i_{2}\dots,i_{k+1}\ge1}}z^{i_1}\otimes z^{i_2}\otimes \cdots z^{i_{k+1}}.
\]
Thus, the equality to be checked becomes
\begin{equation}\label{e:ep:5}
 (n-k)N_{k-1,n-1}\delta_{n,m}=N_{k,n-1}\delta_{n,m}
\end{equation}
where $N_{k,l}$ is the number of writings of $l=a_1+a_2+\dots +a_{k+1}$ with $a_{1},a_{2},\dots,a_{k+1}\ge1$.   It is very easy to verify that $N_{k,l}={l-1 \choose k}$ as one can see from equating the coefficients of $t^{l}$ from the equality
\[
\underbrace{(t+t^{2}+t^{3}+\dots)(t+t^{2}+t^{3}+\dots)\dots (t+t^{2}+t^{3}+\dots)}_{k+1\text{ times}}=\frac{t^{k+1}}{(1-t)^{k+1}}=\sum_{u\ge k}{u \choose k }t^{u+1}.
\]
With this, the equality \eqref{e:ep:5} is completed and in turn \eqref{e:ep:4} is checked. 

After these preliminaries, the main result of this section is contained in the following.  

\begin{theorem}  For any $k\ge1$ and polynomials $\phi,\psi$, we have 
\begin{equation}
\begin{split}
\langle \mathcal{N}\phi,\psi \rangle=&\langle \phi',\psi' \rangle - \frac{1}{2}\langle \partial_{c}\phi',\partial_{c}\psi'\rangle_{\alpha^{\otimes 2}}+\dots +  \frac{(-1)^{k}}{k}\langle \partial_{c}^{(k-1)}\phi',\partial_{c}^{(k-1)}\psi'\rangle_{\alpha^{\otimes (k-1)}} \\ 
&+\frac{(-1)^{k}}{k}\langle \mathcal{M}^{(k)}(\mathcal{M}^{(k)}+kI)^{-1}\partial_{c}^{(k-1)}\phi',\partial_{c}^{(k-1)}\psi'\rangle_{\alpha^{\otimes (k-1)}}. 
\end{split}
\end{equation}
In particular for any $k\ge1$ the following holds
\[
\sum_{l=1}^{2k}\frac{(-1)^{l-1}}{l}\|\partial^{(l-1)}_{c}\phi^\prime\|^{2}_{\alpha^{\otimes l}}
\le \langle\mathcal{N}\phi,\phi 
\rangle\le 
\sum_{l=1}^{2k-1}\frac{(-1)^{l-1}}{l}\|\partial^{(l-1)}_{c}\phi^\prime\|^{2}_{\alpha^{\otimes l}}.
\]
This is a refinement of the free Poincar\'e inequality for the Haar measure $\alpha$.  
\end{theorem}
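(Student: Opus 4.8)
The plan is to prove the displayed identity by induction on $k$, peeling off one explicit term at each stage, and then to read off the two--sided estimate by discarding, or bounding, the last remainder according to the parity of $k$. Throughout one fixes the polynomials $\phi,\psi$; then every object below lives in the finite--dimensional space of tensors assembled from monomials of bounded degree, on which $\mathcal{M}$ and all the $\mathcal{M}^{(j)}$ act diagonally with spectrum in $[0,\infty)$ (recall $\mathcal{M}z^{\pm n}=(n-1)z^{\pm n}$), so the resolvents $(\mathcal{M}^{(j)}+jI)^{-1}$ exist and the manipulations that follow are legitimate. Note also $\phi',\psi'\in L^{2}_{0}(\alpha)$ by \eqref{e:p:0}, and $\partial_{c}$ maps $L^{2}_{0}(\alpha)$ into $L^{2}_{0}(\alpha)\otimes L^{2}_{0}(\alpha)$ (each exponent in $\partial_{c}(z^{n})$ is $\ge1$), so $\partial_{c}^{(j-1)}\phi'$ always lands in $(L^{2}_{0}(\alpha))^{\otimes j}$.

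For the base case $k=1$ I would combine $\langle\mathcal{N}\phi,\psi\rangle=\langle\mathcal{E}\phi',\psi'\rangle$ from \eqref{e:pe:0} with the fact that $\mathcal{E}=(I+\mathcal{M})^{-1}$ on $L^{2}_{0}(\alpha)$ — immediate on the basis $z^{\pm n}$, $n\ge1$, since $\mathcal{E}z^{\pm n}=\frac1n z^{\pm n}$ and $\mathcal{N}z^{\pm n}=(\mathcal{M}+I)z^{\pm n}=nz^{\pm n}$ — and then write $\mathcal{E}\phi'=\phi'-\mathcal{M}(I+\mathcal{M})^{-1}\phi'$ to get
\[
\langle\mathcal{N}\phi,\psi\rangle=\langle\phi',\psi'\rangle-\langle\mathcal{M}(I+\mathcal{M})^{-1}\phi',\psi'\rangle,
\]
which is the stage--$1$ form with $\partial_{c}^{(0)}=\mathrm{id}$ and $\mathcal{M}^{(1)}=\mathcal{M}$.

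For the inductive step I would start from the last term at stage $k$, namely
\[
R_k:=\frac{(-1)^k}{k}\bigl\langle\mathcal{M}^{(k)}(\mathcal{M}^{(k)}+kI)^{-1}\partial_{c}^{(k-1)}\phi',\,\partial_{c}^{(k-1)}\psi'\bigr\rangle_{\alpha^{\otimes k}},
\]
and process it in four moves. First, the operator identity $\mathcal{M}^{(k)}\partial_{c}^{(k-1)}=\partial_{c}^{(k-1)}(\mathcal{M}-(k-1)I)$ derived from \eqref{e:ep:3} gives $(\mathcal{M}^{(k)}+kI)\partial_{c}^{(k-1)}=\partial_{c}^{(k-1)}(\mathcal{M}+I)$, hence $(\mathcal{M}^{(k)}+kI)^{-1}\partial_{c}^{(k-1)}=\partial_{c}^{(k-1)}(\mathcal{M}+I)^{-1}$: use this to slide the resolvent inside $\partial_{c}^{(k-1)}$. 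Second, apply \eqref{e:ep:4} with $(\mathcal{M}+I)^{-1}\phi'$ and $\psi'$ in the roles of $\phi,\psi$ to replace $\langle\mathcal{M}^{(k)}\partial_{c}^{(k-1)}(\cdot),\partial_{c}^{(k-1)}(\cdot)\rangle$ by $k\langle\partial_{c}^{(k)}(\cdot),\partial_{c}^{(k)}(\cdot)\rangle$, the factor $k$ cancelling the $1/k$. Third, slide $\partial_{c}^{(k)}$ back out through $\partial_{c}^{(k)}(\mathcal{M}+I)^{-1}=(\mathcal{M}^{(k+1)}+(k+1)I)^{-1}\partial_{c}^{(k)}$ (the same commutation one level up, again from \eqref{e:ep:3}). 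Fourth, split $(\mathcal{M}^{(k+1)}+(k+1)I)^{-1}=\frac{1}{k+1}\bigl(I-\mathcal{M}^{(k+1)}(\mathcal{M}^{(k+1)}+(k+1)I)^{-1}\bigr)$. The net effect is
\[
R_k=\frac{(-1)^{k}}{k+1}\bigl\langle\partial_{c}^{(k)}\phi',\partial_{c}^{(k)}\psi'\bigr\rangle_{\alpha^{\otimes(k+1)}}+R_{k+1},
\]
which is exactly the passage from stage $k$ to stage $k+1$ (the new explicit summand is the $l=k+1$ term $\frac{(-1)^{l-1}}{l}\langle\partial_{c}^{(l-1)}\phi',\partial_{c}^{(l-1)}\psi'\rangle_{\alpha^{\otimes l}}$, matching the two--sided bound below); iterating from the base case proves the identity for every $k$, with explicit part $\sum_{l=1}^{k}\frac{(-1)^{l-1}}{l}\langle\partial_{c}^{(l-1)}\phi',\partial_{c}^{(l-1)}\psi'\rangle_{\alpha^{\otimes l}}$.

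Finally, for the inequality I would set $\psi=\phi$. Since $\mathcal{M}\ge0$ on $L^{2}_{0}(\alpha)$, each $\mathcal{M}^{(j)}$ is non-negative on the tensor space into which $\partial_{c}^{(j-1)}\phi'$ maps, so $T_j:=\mathcal{M}^{(j)}(\mathcal{M}^{(j)}+jI)^{-1}$ obeys $0\le T_j\le I$, whence $0\le\langle T_j\,\partial_{c}^{(j-1)}\phi',\partial_{c}^{(j-1)}\phi'\rangle\le\|\partial_{c}^{(j-1)}\phi'\|^{2}_{\alpha^{\otimes j}}$. Using the identity at level $2k$ the remainder $R_{2k}$ is $\ge0$, giving $\langle\mathcal{N}\phi,\phi\rangle\ge\sum_{l=1}^{2k}\frac{(-1)^{l-1}}{l}\|\partial_{c}^{(l-1)}\phi'\|^{2}$; using it at level $2k-1$ the remainder $R_{2k-1}$ is $\le0$, giving $\langle\mathcal{N}\phi,\phi\rangle\le\sum_{l=1}^{2k-1}\frac{(-1)^{l-1}}{l}\|\partial_{c}^{(l-1)}\phi'\|^{2}$; for $k=1$ this recovers $P(1/2)$ for $\alpha$. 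I expect the only genuine obstacle to be the bookkeeping in the inductive step — keeping the tensor degrees, the families $\mathcal{M}^{(j)}$ and $\partial_{c}^{(j)}$, and the scalar factors aligned through the four moves — since every ingredient needed (\eqref{e:pe:0}, the commutations from \eqref{e:ep:3} and \eqref{e:ep:6}, the norm identity \eqref{e:ep:4}, the non-negativity of $\mathcal{M}$, and the degreewise reduction that legitimizes the resolvents) is already in hand.
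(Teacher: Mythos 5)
Your proof is correct and follows essentially the same route as the paper: the base case comes from \eqref{e:pe:0} together with $\mathcal{E}=(I+\mathcal{M})^{-1}$ on $L^{2}_{0}(\alpha)$, and your inductive step is exactly the paper's chain of equalities — slide the resolvent through $\partial_{c}^{(k-1)}$ via the commutation relations \eqref{e:ep:3}/\eqref{e:ep:6}, apply \eqref{e:ep:4} to cancel the $1/k$, slide back, and split the resolvent — with the parity/positivity argument on the remainder giving the two-sided bound. Your sign bookkeeping (explicit terms $\frac{(-1)^{l-1}}{l}$, remainder $\frac{(-1)^{k}}{k}$) is the consistent reading of the displayed identity, and your observation that the resolvents need only be inverted on tensors in the image of $\partial_{c}^{(k-1)}$ matches the caveat at the end of the paper's proof.
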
 

\begin{proof}
The proof is an induction on $k$, use of the key facts \eqref{e:ep:4} and \eqref{e:ep:6} combined with the following  sequence of equalities 
\[
\begin{split}
\langle \mathcal{M}^{(k)}&(\mathcal{M}^{(k)}+kI)^{-1}\partial_{c}^{(k-1)}
\psi,\partial_{c}^{(k-1)}\psi \rangle_{\alpha^{\otimes k}}
\underset{\eqref{5.7bis}}{=}\langle \mathcal{M}^{(k)}\partial_{c}^{(k-1)}(\mathcal{M}+I)^{-1} \psi,\partial_{c}^{(k-1)}\psi 
\rangle_{\alpha^{\otimes k}} \\ 
&\underset{\eqref{e:ep:4}}{=}k\langle \partial_{c}^{(k)}(\mathcal{M}+I)^{-1} \psi,\partial_{c}^{(k)}\psi \rangle_{\alpha^{\otimes k}} \\ 
&\underset{\eqref{5.7bis}}{=}k\langle(\mathcal{M}^{(k+1)}+(k+1)I)^{-1}\partial_{c}^{(k)} \psi,\partial_{c}^{(k)}\psi
\rangle_{\alpha^{\otimes k}} \\
&\underset{\eqref{e:ep:6}}{=}\frac{k}{k+1}\langle \partial_{c}^{(k)} \psi,\partial_{c}^{(k)}\psi
\rangle_{\alpha^{\otimes k}} -\frac{k}{k+1}\langle(\mathcal{M}^{(k+1)}(\mathcal{M}^{(k+1)}+(k+1)I)^{-1}\partial_{c}^{(k)} \psi,\partial_{c}^{(k)}\psi\rangle_{\alpha^{\otimes k}}.
\end{split}
\]
Notice that the operator $(\mathcal{M}^{(k)}+kI)$ is not invertible on the set of all tensors, but it is so on the set of tensors on the form $\partial_{c}^{(k-1)}\phi$.   \qedhere

\end{proof}

\section{Poincar\'e's inequality under convexity assumptions}\label{S:6}

The main result of this section is the following.  
\begin{theorem}\label{t:p}  Assume $Q$ is a $C^3$ potential on $\T$ such that $Q''\ge\rho -1/2$ for some $\rho>0$.  Then for any smooth function $\phi$ on $\T$, 
\begin{equation}\label{e6:t6}
2\rho\iint\left|\frac{\phi(z)-\phi(w)}{z-w} \right|^2\alpha(dz)\alpha(dw)\le \int\left|\phi'-\int \phi'\,d\mu_Q\right|^2 d\mu_Q.
\end{equation}
\end{theorem}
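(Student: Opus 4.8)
The plan is to derive \eqref{e6:t6} from the Poincar\'e inequality $P(1/2)$ for the Haar measure $\alpha$ (equivalently $\|\mathcal{E}\|\le 1$) together with the density lower bound for $\mu_Q$ that Corollary~\ref{c:2} provides under convexity; the convexity hypothesis enters only through that density bound.

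First I would note that necessarily $\rho\le 1/2$: applying \eqref{e:p:0} to $Q'$ gives $\int Q''\,d\alpha=0$, so integrating the pointwise bound $Q''\ge \rho-1/2$ against $\alpha$ yields $\rho-1/2\le 0$. Then I would set $\beta=\rho-\tfrac12+\tfrac1{2\log 2}$, which is $>0$ since $\rho>0$ and $\log 2<1$, and observe that the hypothesis is exactly $Q''\ge \beta-\tfrac1{2\log 2}$. Hence Corollary~\ref{c:2} applies: $\mu_Q=w\,\alpha$ with $w$ continuous and $w\ge 2\beta\log 2=1-(1-2\rho)\log 2$. The decisive arithmetic observation is that $1-(1-2\rho)\log 2\ge 2\rho$, which rearranges to $(1-2\rho)(1-\log 2)\ge 0$ and holds because $\rho\le 1/2$ and $\log 2<1$; so in fact $w\ge 2\rho$ at every point of $\T$.

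With this in hand the inequality follows in three cheap moves. For smooth $\phi$, writing $c=\int \phi'\,d\mu_Q$ and using $w\ge 2\rho$ pointwise and then $\int\phi'\,d\alpha=0$ (again \eqref{e:p:0}),
\[
\int\Big|\phi'-\int\phi'\,d\mu_Q\Big|^2 d\mu_Q=\int|\phi'-c|^2 w\,d\alpha\ \ge\ 2\rho\int|\phi'-c|^2\,d\alpha\ \ge\ 2\rho\int|\phi'|^2\,d\alpha,
\]
the last inequality because $\int|\phi'-c|^2 d\alpha=\int|\phi'|^2 d\alpha+|c|^2$. Next, \eqref{e:pe:0} gives $\langle\mathcal{N}\phi,\phi\rangle=\langle\mathcal{E}\phi',\phi'\rangle$, and since $\mathcal{E}$ is self-adjoint with spectrum $\{0,1,\tfrac12,\tfrac13,\dots\}\subset[0,1]$ this is $\le\|\phi'\|^2_{L^2(\alpha)}=\int|\phi'|^2 d\alpha$. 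Finally \eqref{ep:9} identifies $\langle\mathcal{N}\phi,\phi\rangle$ with $\iint|(\phi(z)-\phi(w))/(z-w)|^2\alpha(dz)\alpha(dw)$, so chaining the two displays yields \eqref{e6:t6}.

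I do not expect a genuine obstacle here: the one nontrivial ingredient is Corollary~\ref{c:2}, which itself rests on the evaluation $\tfrac1{2\pi}\int_0^\pi t\cot(t/2)\,dt=\log 2$. The point to get exactly right is the matching of constants — the density lower bound $2\beta\log 2$ coming from the \emph{sharp} value $\log 2$ is precisely what makes $w\ge 2\rho$ hold under the forced restriction $\rho\le 1/2$. I would also remark that this argument actually proves the stronger statement with $2\rho\int|\phi'|^2 d\alpha$ (indeed with $2\rho$ replaced by $1-(1-2\rho)\log 2$) on the left of \eqref{e6:t6}, so the constant $\rho$ there is not optimal for this method, in keeping with the slack the paper repeatedly notes for circle inequalities.
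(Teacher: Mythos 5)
Your argument is correct and is essentially the paper's own proof: both rest on the identity $\langle\mathcal{N}\phi,\phi\rangle=\langle\mathcal{E}\phi',\phi'\rangle$ together with $\mathcal{E}\le I$, and on Corollary~\ref{c:2} applied with $\beta=\rho-\tfrac12+\tfrac1{2\log 2}$ plus the observation $\rho\le 1/2$ (from $\int Q''\,d\alpha=0$) to get the density bound $\tfrac{d\mu_Q}{d\alpha}\ge 2\beta\log 2\ge 2\rho$. The only cosmetic difference is that you multiply by the density and absorb the constant $c=\int\phi'\,d\mu_Q$ via $\int|\phi'-c|^2d\alpha=\int|\phi'|^2d\alpha+|c|^2$, whereas the paper subtracts the constant inside $\mathcal{E}$ (using $\mathcal{E}1=0$) and divides by $1-\mathcal{N}Q$; your closing remark about the improved constant $2\log 2\bigl(\rho-\tfrac12+\tfrac1{2\log 2}\bigr)$ matches the paper's comment after the theorem.
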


\begin{proof}
The proof starts with \eqref{e:pe:0} and the observation that $\mathcal{E}1=0$, thus allowing us to write
\[
\langle \mathcal{N}\phi,\phi \rangle=\langle \mathcal{E}(\phi'-\int \phi' d\mu_{Q}), \phi'-\int \phi' d\mu_{Q} \rangle.
\] 
Furthermore, since $\mathcal{E}\le I$, it follows that 
\[
\begin{split}
\langle \mathcal{E}(\phi'-\int \phi' d\mu_{Q}), \phi'-\int \phi' d\mu_{Q} \rangle &\le \langle \phi'-\int \phi' d\mu_{Q}, \phi'-\int \phi' d\mu_{Q} \rangle \\ 
& = \left\langle \frac{1}{1-\mathcal{N}Q}\left(\phi'-\int \phi' d\mu_{Q}\right), \phi'-\int \phi' d\mu_{Q} \right\rangle_{\mu_{Q}} \\
&=\int\frac{\left|\phi'-\int \phi'\,d\mu_Q\right|^2}{1-\mathcal{N}Q} d\mu_Q,
\end{split}
\]
where in the middle we used \eqref{e:d}.  

What we need to show now is that $1-\mathcal{N}Q\ge2\rho$.  To attain this we use Corollary~\ref{c:2}, again in conjunction with \eqref{e:d}.  Using $\beta=\rho-\frac{1}{2}+\frac{1}{2\log2}$ in Corollary~\ref{c:2} and the fact that $\rho>0$, we obtain that 
\[
1-\mathcal{N}Q\ge 2\log 2\left(\rho-\frac{1}{2}+\frac{1}{2\log 2}\right). 
\]
On the other hand, since $\int Q''\,d\alpha=0$ and $Q''\ge\rho-1/2$, we learn that $\rho\le 1/2$.   This justifies that 
\[
2\log 2\left(\rho-\frac{1}{2}+\frac{1}{2\log 2}\right)-2\rho=2(\log 2-1)\left(\rho-\frac{1}{2}\right)\ge 0
\]
which then proves that $1-\mathcal{N}Q\ge2\rho$ and this in turn ends the proof.  \qedhere
\end{proof}

We should point that the constant $2\rho$ in \eqref{e6:t6} can be replaced by the larger quantity $2\log 2\left(\rho-\frac{1}{2}+\frac{1}{2\log 2}\right)$. However, in the case of $Q\equiv 0$, which is to say $\rho=1/2$, both quantities give the same values, namely $1$ which is also sharp.

\section{The modified Wasserstein Distance} \label{S:7} 

In \cite{HPU1}, the free transportation inequality on the circle is deduced as the limiting of the transportation inequalities on orthogonal groups.  To recall the main idea in there, let's denote by $U(n)$ the group of $n\times n$  unitary matrices and for any potential $Q$ on $\T$ and consider
\[
\p_{n}^{Q}(dU)=\frac{1}{Z_{n}(Q)}e^{-n\Tr Q(U)}dU. 
\]
This is going to be our reference probability measure on $U(n)$ associated to $Q$.  The classical transportation inequality applied to this reference measure should produce the free transportation inequality.   However, to apply the classical inequality one has to take into account that the classical case requires extra conditions, as for example the Bakry-Emery condition.  Even in the simplest case, $Q\equiv 0$, the reference measure does not satisfy Bakry-Emery condition because the Ricci curvature of $U(n)$ is degenerate.   Nevertheless, this is saved by the fact that $SU(n)$, the subgroup of unitary matrices of determinant $1$, has indeed constant Ricci curvature.  It is interesting to point out that the particular structure of $SU(n)$ is actually reflected into the corresponding free Log-Sobolev inequality, while the form of the transportation inequality is in fact the same as in the case of the real line (with the appropriate quantities well defined).  Particularly noticeable  here is that the free transportation inequality still uses the standard Wasserstein distance on the circle.   

What we propose in this note is a distance which seems to be, morally, a reflection of the $SU(n)$ structure versus $U(n)$ structure.  The main difference between $U(n)$ and $SU(n)$ is that the latter is a version of the former with an extra constraint.  If we look at the measures on $U(n)$ and lift them (using the exponential map) to the Lie algebra of $U(n)$, then, lifts of measures which are supported on $SU(n)$ have to satisfy a certain constraint.  At the heuristic level this is that the integral of the lifted measure must have $0$ expectation.  This last statement comes from the basic structure of $SU(n)$, namely matrices of determinant $1$ and matrices in the Lie algebra $su(n)$ have $0$ trace.     

Furthermore, the passage from the transportation inequality on $U(n)$ to the free transportation on the circle, involves looking at the distribution of the eigenvalues of the random matrix.  According to the above paragraph  (at the heuristic level) the lift to the Lie algebra level implies that the lifted measure of the eigenvalues satisfies the extra constraint that the mean of this measure is $0$.  This is going to be our guiding leitmotiv in the following alteration of the definition of the Wasserstein distance. 

On a metric space $(X,d)$ the standard Wasserstein distance $W_{p}$ for $p\ge1$ is defined as 
\[
W_{p}(\mu,\nu)=\inf_{\pi\in\Pi(\mu,\nu)}\left( \iint d(x,y)^{p}\pi(dx,dy) \right) ^{1/p}
\]
where the set $\Pi(\mu,\nu)$ is the set of probability measures on $X\times X$ with marginals $\mu$ and $\nu$.  We will use this in the case of $X=\R$ with the length distance and also in the case of $X=\T$ and the arc-length distance.   We will do this tacitly and we will mention this explicitly where confusion arises.  

We will modify this definition in what follows and for this purpose we proceed first with more notations.  

In the sequel, $\exp:\R\to\T$ denotes the map $\exp(x)=e^{ix}$.   For a given $u$, we set $\exp_{u}$ the restriction of $\exp$ to $[u,u+2\pi)$.  Furthermore, for a measure $\mu$ on $\T$, we define the lift measure $\bar{\mu}$ on $\R$ by 
\[
\bar{\mu}(A)=\sum_{n\in\Z}\mu(\exp(A\cap [2n\pi,2(n+1)\pi)).
\]

For a measure $\mu$ on $\T$, the lift measure $\bar{\mu}_{u}$ is simply the restriction of $\bar{\mu}$ to the interval $[u,u+2\pi)$.

\begin{definition} For any two probability measures $\mu,\nu$ on $\T$, we define for any $p\ge1$
\begin{equation}\label{e:W}
\mathcal{W}_{p}(\mu,\nu)=\sup\left\{ W_{p}(\bar{\mu}_{u},\bar{\nu}_{v}): u,v\in[0,2\pi] \text{ such that } \int x\,\bar{\mu}_{u}(dx)=\int x\,\bar{\nu}_{v}(dx)\right\},
\end{equation}
where $W_{p}(\bar{\mu}_{u},\bar{\nu}_{v})$ is the standard Wasserstein distance on the real line.   We use here the convention that the supremum over the empty set is $+\infty$, thus $\mathcal{W}_{p}(\mu,\nu)$ can be $+\infty$.  In fact, $\mathcal{W}_{p}(\mu,\nu)=+\infty$ if and only if for any $u,v$, $\int x\,\bar{\mu}_{u}(dx)\ne\int x\,\bar{\nu}_{v}(dx)$.

We denote by $\mathcal{P}_{as}(\T)$ the set of atomless probability measures on $\T$. 
\end{definition}

Below we collect a set of properties of this new object.  

\begin{proposition}\label{p:W}
\begin{enumerate} 

\item $\mathcal{W}_{p}(\mu,\nu)=\mathcal{W}_{p}(\nu,\mu)$.

\item  For any probability measures $\mu,\nu$ on $\T$, 
\begin{equation}\label{e:t1}
W_{p}(\mu,\nu)\le \mathcal{W}_{p}(\mu,\nu).
\end{equation}

\item $\mathcal{W}_{p}(\mu,\nu)=0$ if and only if $\mu=\nu$.

\item If $\mu=\delta_{a}$ and $\nu=\delta_{b}$, then $\mathcal{W}_{p}(\delta_{a},\delta_{b})=\infty$ unless $a=b$.

\item For any $a\in \T$, $\mathcal{W}_{p}(\delta_{a},\alpha)=\left(\frac{2\pi^{p+1}}{p+1}\right)^{1/p}$ for any $p\ge1$.  In particular, for $a,b\in\T$ with $a\ne b$ we have $\mathcal{W}_{p}(\delta_{a},\delta_{b})>\mathcal{W}_{p}(\delta_{a},\alpha)+\mathcal{W}_{p}(\alpha,\delta_{b})$.  This means that $\mathcal{W}_{p}$ is not a distance on the set of probability measures. 

\item If $\mu,\nu\in \mathcal{P}_{as}(\T)$, then there exists $t\in[0,2\pi]$ such that $\int x\,\bar{\mu}_{t}(dx)=\int x\,\bar{\nu}_{t}(dx)$.  Moreover, for any $u\in[0,2\pi]$, there exists $v\in[0,2\pi]$ such that $\int x\,\bar{\mu}_{u}(dx)=\int x\,\bar{\nu}_{v}(dx)$.  

If in addition, $u,v\in[0,2\pi]$ and $u',v'\in[0,2\pi]$ are two pairs such that $\int x\,\bar{\mu}_{u}(dx)=\int x\,\bar{\nu}_{v}(dx)$ and $\int x\,\bar{\mu}_{u'}(dx)=\int x\,\bar{\nu}_{v'}(dx)$, then $W_{p}(\bar{\mu}_{u},\bar{\nu}_{v})=W_{p}(\bar{\mu}_{u'},\bar{\nu}_{v'})$.  In other words, for any $u,v\in[0,2\pi]$ such that $\int x\,\bar{\mu}_{u}(dx)=\int x\,\bar{\nu}_{v}(dx)$ and any $p\ge1$
\[
\mathcal{W}_{p}(\mu,\nu)=W_{p}(\bar{\mu}_{u},\bar{\nu}_{v}).  
\]
In particular, $\mathcal{W}_{p}(\mu,\nu)<\infty$ and the supremum in the definition of $\mathcal{W}_{p}(\mu,\nu)$ is attained.  

\item If $\mu,\zeta,\nu$ are three probability measures in $\mathcal{P}_{as}(\T)$, then  
\[
\mathcal{W}_{p}(\mu,\nu)\le \mathcal{W}_{p}(\mu,\zeta)+\mathcal{W}_{p}(\zeta,\nu).  
\]

\item On $\mathcal{P}_{as}(\T)$, the topology induced by $\mathcal{W}_{p}$ is the topology of weak convergence.   More precisely, if $\{\mu_{n}\}_{n\ge1}\subset\mathcal{P}_{as}(\T)$ and $\mu\in \mathcal{P}_{as}(\T)$, then $\mu_{n}$ converges weakly to $\mu$ if and only if $\mathcal{W}_{p}(\mu_{n},\mu)$ converges to $0$.   

\item For any measure $\mu\in \mathcal{P}_{as}(\T)$, $\mathcal{W}_{p}(\mu,\alpha)=W_{p}(\mu,\alpha)$.  
\end{enumerate}

\end{proposition}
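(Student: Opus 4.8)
The plan is to reduce the identity, via Proposition~\ref{p:W}(6), to comparing one explicit one–dimensional transport with the optimal circular transport of $\mu$ onto $\alpha$. One inequality is for free: the map $\exp\times\exp$ sends any coupling of $\bar\mu_{u}$ and $\bar\nu_{v}$ to a coupling of $\mu$ and $\alpha$ without increasing the cost, since $d(e^{ix},e^{iy})\le|x-y|$; hence $W_{p}(\mu,\alpha)\le W_{p}(\bar\mu_{u},\bar\alpha_{v})$ for every pair of lifts, which is exactly Proposition~\ref{p:W}(2), and so $W_{p}(\mu,\alpha)\le\mathcal{W}_{p}(\mu,\alpha)$. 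The real work is the reverse inequality $\mathcal{W}_{p}(\mu,\alpha)\le W_{p}(\mu,\alpha)$.

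First I would fix the lift $\bar\mu_{0}$ of $\mu$ on $[0,2\pi)$, write $q=F_{\bar\mu_{0}}^{-1}\colon[0,1)\to[0,2\pi)$ for its quantile function, and introduce the displacement $g(t)=q(t)-2\pi t$, so that the monotone one–dimensional map carrying $\bar\mu_{0}$ onto a uniform lift of $\alpha$ moves $q(t)$ by $g(t)$ up to an additive constant. Because $q$ is nondecreasing with values in $[0,2\pi)$, one has $-2\pi t\le g(t)<2\pi(1-t)$, and combining this with the monotonicity of $q$ a short averaging argument yields the sharper estimate
\[
\sup_{t}g(t)-\bar g<\pi,\qquad \bar g-\inf_{t}g(t)<\pi,\qquad \bar g:=\int_{0}^{1}g(t)\,dt .
\]
In particular $g(t)-\bar g\in(-\pi,\pi)$ for every $t$. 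This ``no antipodal displacement'' bound is genuinely a statement about quantile functions of probability measures on $\T$, and it is the technical heart of the argument.

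Next, a direct computation with any admissible pair of lifts, which by Proposition~\ref{p:W}(6) all give the same value, shows that the barycentre–matched choice $v=\int x\,\bar\mu_{0}(dx)-\pi=\bar g$ gives
\[
\mathcal{W}_{p}(\mu,\alpha)^{p}=\int_{0}^{1}\bigl|q(t)-v-2\pi t\bigr|^{p}\,dt=\int_{0}^{1}\bigl|g(t)-\bar g\bigr|^{p}\,dt .
\]
On the other hand, since the target $\alpha$ is the uniform measure, the optimal transport of $\mu$ onto $\alpha$ on the circle is realised by a monotone map $q(t)\mapsto 2\pi t+c\pmod{2\pi}$ (a standard feature of circular optimal transport to a uniform target), so that
\[
W_{p}(\mu,\alpha)^{p}=\min_{c\in\R}\int_{0}^{1}\operatorname{dist}\bigl(g(t)-c,\,2\pi\Z\bigr)^{p}\,dt .
\]
By the estimate of the previous paragraph the value $c=\bar g$ produces no wrap–around, i.e.\ $\operatorname{dist}(g(t)-\bar g,2\pi\Z)=|g(t)-\bar g|$ for all $t$, so the integral at $c=\bar g$ equals $\int_{0}^{1}|g(t)-\bar g|^{p}\,dt=\mathcal{W}_{p}(\mu,\alpha)^{p}$. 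It therefore only remains to check that the minimum in the last display is \emph{attained} at $c=\bar g$; granting that, $W_{p}(\mu,\alpha)^{p}=\mathcal{W}_{p}(\mu,\alpha)^{p}$, and with the first paragraph the proof is finished.

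The main obstacle is exactly this last point: that among the one–parameter family of monotone transports of $\mu$ onto the uniform measure, the cheapest is the one whose two lifted marginals have equal barycentres — precisely the configuration singled out in the definition of $\mathcal{W}_{p}$. For $p=2$ this is transparent, since on the unwrapped window the functional $c\mapsto\int_{0}^{1}\operatorname{dist}(g(t)-c,2\pi\Z)^{2}\,dt$ coincides with the strictly convex $c\mapsto\int_{0}^{1}(g(t)-c)^{2}\,dt$, whose minimiser is $\bar g$, while the oscillation estimate rules out any wrapped competitor doing better. For general $p$ one must instead argue with the geometry of circular optimal transport to the uniform measure — in effect, that the optimal plan never carries mass across the antipodal cut — and establishing this optimality, rather than carrying out the (routine) one–dimensional computations, is the delicate step where the barycentre constraint built into $\mathcal{W}_{p}$ must be used.
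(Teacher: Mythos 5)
Your proposal only addresses the last item, $\mathcal{W}_{p}(\mu,\alpha)=W_{p}(\mu,\alpha)$, taking items (2) and (6) as known inputs; the other eight claims of the proposition (in particular the substantial item (6), with the cumulative-function bookkeeping that makes the whole definition workable) are not argued at all. Within item (9), the parts you do carry out are sound and close in spirit to the paper: the oscillation bound $g-\bar g\in(-\pi,\pi)$ is exactly the paper's displacement estimate $|\theta(x)-x|\le\pi$ for the monotone map between barycentre-matched lifts, proved by the same monotonicity-plus-averaging computation, and the identity $\mathcal{W}_{p}(\mu,\alpha)^{p}=\int_{0}^{1}|g(t)-\bar g|^{p}\,dt$ is a correct use of item (6).

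The genuine gap is at the declared crux, and you acknowledge it yourself. Granting the structure theorem you invoke (that circular optimal transport onto the uniform measure is a rotated monotone rearrangement, so $W_{p}(\mu,\alpha)^{p}=\min_{c}\int_{0}^{1}\operatorname{dist}(g(t)-c,2\pi\Z)^{p}\,dt$ --- itself only stated, not proved, and only the inequality $\le$ of it is immediate), your computation at $c=\bar g$ shows merely that the minimum is $\le\int|g-\bar g|^{p}$, i.e.\ it re-derives $W_{p}\le\mathcal{W}_{p}$, which is item (2). All the content of (9) sits in the reverse inequality, i.e.\ in showing that no other rotation $c$, in particular no ``wrapped'' one, beats $c=\bar g$; this is never established. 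Even for $p=2$ the argument is incomplete: convexity of $c\mapsto\int(g-c)^{2}$ identifies $\bar g$ as the minimiser only on the window of $c$'s with $g(t)-c\in[-\pi,\pi]$ for all $t$, and the assertion that ``the oscillation estimate rules out any wrapped competitor doing better'' is exactly the missing comparison (note that $\operatorname{dist}(g(t)-c,2\pi\Z)=g(t)-c+2\pi k_{c}(t)$ with a $t$-dependent integer, so one cannot simply invoke that the variance minimises $\int(g-a)^{2}$ over constants $a$). The paper avoids this rotation-comparison altogether: it takes the barycentre-matched monotone lift map, shows its displacement is at most $\pi$, observes that it therefore projects to a map moving mass only along geodesic segments of $\T$, and concludes that this monotone geodesic map is optimal for the circular problem, which gives $W_{p}(\mu,\alpha)\ge W_{p}(\bar\mu_{u},\bar\alpha_{v})=\mathcal{W}_{p}(\mu,\alpha)$ directly. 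To repair your route you would either have to prove that the optimal shift in the rotated-rearrangement formula satisfies the barycentre-matching condition (which is essentially equivalent to the statement being proved), or switch to the paper's argument that a monotone, geodesic-following map is already optimal on the circle.
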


\begin{proof}

\begin{enumerate}
\item It is clear from the definition.  
\item The point is that if we denote by $d(z,w)$ the distance on $\T$, then 
\[
d(z,w)=\inf_{k,l\in\Z} |x+2\pi k -(y+2\pi l) |
\]
for any $x,y$ such that $z=e^{ix}$ and $y=e^{iy}$. In particular, 
\begin{equation}\label{e:t2c}
d(z,w)\le |x-y|
\end{equation}
for any $x,y$ such that $z=e^{ix}$ and $w=e^{iy}$. 

If $\mathcal{W}_{p}(\mu,\nu)=+\infty$, there is nothing to prove.  In the case $\mathcal{W}_{p}(\mu,\nu)<+\infty$ we can pick any $u,v$ such that $\int x\bar{\mu}_{u}(dx)=\int x\bar{\nu}_{v}(dx)$, and $\bar{\gamma}$ any plan on $\R\times\R$ with marginals $\bar{\mu}_{u}$ and $\bar{\nu}_{v}$.  Then $\gamma=(\exp_{u}\times \exp_{v})_{\#}\bar{\gamma}$ is a plan on $\T\times \T$ with marginals $\mu$ and $\nu$.  Now,  \eqref{e:t2c} shows that for any $u,v\in[0,2\pi]$ such that $\int x\bar{\mu}_{u}(dx)=\int x\bar{\nu}_{v}(dx)$, we get that 
\[
W_{2}(\mu,\nu)\le W_{2}(\bar{\mu}_{u},\bar{\nu}_{v})
\]
and consequently, 
\[
W_{2}(\mu,\nu)\le \mathcal{W}_{2}(\mu,\nu)
\]
for any $\mu,\nu$.  

\item This is clear from the fact that $W_{p}(\mu,\nu)=0$ if and only if $\mu=\nu$.  This fact combined with \eqref{e:t1} yields the claim.  

\item For any $u\in[0,2\pi]$, $\int x (\bar{\delta_{a}})_{u}(dx)=\bar{a}$ with $\exp_{u}(\bar{a})=a$.  Similarly, for any $v\in [0,2\pi]$, we also have $\int x (\bar{\delta_{b}})_{v}(dx)=\bar{b}$ with $\exp_{v}(\bar{b})=b$.  Therefore, we need to have $\bar{a}=\bar{b}$ which means that $a=b$ and obviously, $\mathcal{W}_{p}(\mu,\nu)=0$.  If $a\ne b$, we can not find $u,v$ such that the integral condition is satisfied, thus $\mathcal{W}_{p}(\delta_{a},\delta_{b})=+\infty$.   

\item In the first place, for any $v$, $\bar{\alpha}_{v}$ is the uniform probability measure on $[v,v+2\pi)$.   We need to find $u$ and $v$ such that $\int x (\bar{\delta_{a}})_{u}(dx)=\bar{a}=\int x \bar{\alpha}_{v}(dx)=v+\pi$ from which $v=(\bar{a}-\pi)$ and then $W_{p}^{p}((\bar{\delta_{a}})_{u},\bar{\alpha}_{v})=\int_{v}^{v+2\pi}|x-\bar{a}|^{p}dx=\frac{2\pi^{p+1}}{p+1}$.  The rest is trivial.  

\item The key here is the following equation 
\begin{equation}\label{e:t4}
\int x\bar{\mu}_{u}(dx)=2\pi\bar{\mu}_{0}([0,u))+\int x\bar{\mu}_{0}(dx),
\end{equation}
which is a consequence of the fact that $\bar{\mu}_{u}(A)=\bar{\mu}_{0}(A\cap [u,2\pi))+\bar{\mu}_{0}((A-2\pi)\cap[0,u])$ for any $A\subset[u,u+2\pi)$.   Now, if $\mu,\nu$ do not have atoms, then we want to find $t\in[0,2\pi]$ such that 
\begin{equation}\label{e:t5bb}
\bar{\mu}_{0}([0,t))-\bar{\nu}_{0}([0,t))=-\frac{1}{2\pi}\left(\int x\bar{\mu}_{0}(dx)-\int x\bar{\nu}_{0}(dx) \right)
\end{equation}
Since $\mu$ and $\nu$ do not have atoms, the function 
\[
f(t)=\bar{\mu}_{0}([0,t))-\bar{\nu}_{0}([0,t))
\]
is a continuous function of $t$.  In addition, a simple argument shows that
\[
\frac{1}{2\pi}\int_{[0,2\pi)}f(t)dt=-\frac{1}{2\pi}\left(\int x\bar{\mu}_{0}(dx)-\int x\bar{\nu}_{0}(dx) \right).
\]
Therefore, continuity of $f$ guarantees that there is an intermediate point $t\in[0,2\pi]$ such that \eqref{e:t5bb} is satisfied.  

Now, using \eqref{e:t4}, we get that $\int x\,\bar{\mu}_{u}(dx)=\int x\,\bar{\nu}_{v}(dx)$ if and only if 
\begin{equation}\label{e:t6}
\bar{\mu}_{0}([0,u))-\bar{\nu}_{0}([0,v))=-\frac{1}{2\pi}\left(\int x\,\bar{\mu}_{0}(dx)-\int x\,\bar{\nu}_{0}(dx) \right).
\end{equation}
Given the solution $t$ to \eqref{e:t5bb}, the above equation becomes equivalent to 
\[
\bar{\mu}_{0}([t,u))=\bar{\nu}_{0}([t,v))
\] 
if $u>t$ and 
\[
\bar{\mu}_{0}([u,t))=\bar{\nu}_{0}([v,t))
\]
if $u\le t$.  Again, since the measures do not have atoms, we deduce that given $u$, there is at least one $v$ such that \eqref{e:t6} is satisfied.  

Assume now that $u,v\in [0,2\pi]$ are two values such that $\int x\,\bar{\mu}_{u}(dx)=\int x\,\bar{\nu}_{v}(dx)$.  There is a non-decreasing transportation map denoted by $\theta_{u,v}$ which maps $\bar{\mu}_{v}$ into $\bar{\nu}_{v}$. This means, according to \cite[page 75, equation (2.49)]{Villani2}, that 
\begin{equation}\label{e:t7}
\bar{\mu}_{u}([u,x))=\bar{\nu}_{v}([v,\theta_{u,v}(x)))
\end{equation}
for any $x\in[u,u+2\pi)$ and 
\[
W_{p}^{p}(\bar{\mu}_{u},\bar{\nu}_{v})=\int (x-\theta_{u,v}(x))^{p}\,\bar{\mu}_{u}(dx).
\]
It is clear that $\theta_{u,v}:[u,u+2\pi)\to[v,v+2\pi)$ because $\bar{\mu}_{u}$ is supported on $[u,u+2\pi)$ and $\bar{\nu}_{v}$ is supported on $[v,v+2\pi)$.   

Next assume that $u,v$ and $u',v'$ are two pairs such that $\int x\,\bar{\mu}_{u}(dx)=\int x\,\bar{\nu}_{v}(dx)$ and $\int x\,\bar{\mu}_{u'}(dx)=\int x\,\bar{\nu}_{v'}(dx)$.  We may assume for example that $u\le u'$ and then according to \eqref{e:t6}, we obtain that 
\[
\bar{\mu}_{0}([u,u'))=\bar{\nu}_{0}([v,v')).
\]
If $\bar{\mu}_{u}([u,u'))=0$, then we also have that $\bar{\nu}_{0}([v\wedge v',v\vee v'))=0$ and in this case, if $v'<v$ it is easy to see that $\theta_{u,v}$ also maps $\bar{\mu}_{u'}$ into $\bar{\nu}_{v'}$ and thus $W_{2}(\bar{\mu}_{u},\bar{\nu}_{v})=W_{2}(\bar{\mu}_{u'},\bar{\nu}_{v'})$.   Therefore, without loss of generality we may simply assume that $v\le v'$.    By the definition of $\bar{\mu}_{u}$, we also obtain that 
\begin{equation}\label{e:t10}
\bar{\mu}_{u}([u,u'))=\bar{\nu}_{v}([v,v')).  
\end{equation}

Let $F_{u}$ be the cumulative function of $\bar{\mu}_{u}$ and $G_{v}$ the cumulative function of $\bar{\nu}_{v}$.  Similarly denote by $F_{u'}$ and $G_{v'}$ the cumulative functions of $\bar{\mu}_{u'}$ and $\bar{\nu}_{v'}$.   Now, \eqref{e:t10} reads as
\begin{equation}\label{e:t13}
F_{u}(u')=G_{v}(v')
\end{equation}
Also, from \cite[Section 2.2]{Villani2} and with the notations from there, we know that 
\begin{equation}\label{e:t12}
\theta_{u,v}=G_{v}^{-1}\circ F_{u}
\end{equation} 
where $G^{-1}$ is the generalized inverse of $G$.  

The main point in the proof now is the following relation
\begin{equation}\label{e:t11}
F_{u'}(x)=\begin{cases}
F_{u}(x)-F_{u}(u') & u'\le x< u+2\pi \\
F_{u}(x-2\pi)+1-F_{u}(u') & u+2\pi \le x <u'+2\pi.
\end{cases}
\end{equation}
To see why this is true, we just need to observe that $F_{u'}(x)=\bar{\mu}_{u'}([u',x))=\bar{\mu}_{u}([u,x)) - \bar{\mu}_{u}([u,u'))$ for $u'\le x<u+2\pi$ and $\bar{\mu}_{u'}([u',x))=\bar{\mu}([u',u+2\pi))+\bar{\mu}([u+2\pi,x))=1-F_{u}(u')+\bar{\mu}([u,x-2\pi))$.   Using the similar writing for $G_{v}$ and $G_{v'}$ it is now a simple exercise to check that 
\begin{equation}\label{e:t11b}
G_{v'}^{-1}(y)=\begin{cases}
G_{v}^{-1}(y+G_{v}(v')) & 0\le y< G_{v}(v') \\
G_{v}^{-1}(y-1+G_{v}(v'))+2\pi & G_{v}(v') \le y <1.
\end{cases}
\end{equation}

Combining \eqref{e:t13}, \eqref{e:t12}, \eqref{e:t11} and \eqref{e:t11b} we obtain 
\begin{equation}\label{e:t8}
\theta_{u',v'}(x)=\begin{cases}
\theta_{u,v}(x) & u'\le x< u+2\pi \\
\theta_{u,v}(x-2\pi)+2\pi & u+2\pi \le x<u'+2\pi.
\end{cases}
\end{equation}
Once this is obtained, a verification reveals that
\[
W_{p}^{p}(\bar{\mu}_{u},\bar{\nu}_{v})=\int_{[u,u+2\pi)}(x-\theta_{u,v}(x))^{p}\bar{\mu}(dx)=\int_{[u',u'+2\pi)}(x-\theta_{u',v'}(x))^{p}\bar{\mu}(dx)=W_{p}^{p}(\bar{\mu}_{u'},\bar{\nu}_{v'}),
\]
which is what we wanted to show.  The rest is obvious.  

\item Take $u,v,t$ such that $\int x\,\bar{\mu}_{u}(dx)=\int x\,\bar{\zeta}_{t}(dx)$ and for this fixed $t$ choose $v$ such that $\int x\,\bar{\zeta}_{t}(dx)=\int x\,\bar{\nu}_{v}(dx)$ which is always possible as we pointed out above.  Then, the rest follows from the fact that $\int x\,\bar{\mu}_{u}(dx)=\int x\,\bar{\nu}_{v}(dx)$ and according to the previous item, 
\[
\mathcal{W}_{p}(\mu,\nu)=W_{p}(\bar{\mu}_{u},\bar{\nu}_{v})\le W_{p}(\bar{\mu}_{u},\bar{\zeta}_{t})+W_{p}(\bar{\zeta}_{t},\bar{\nu}_{v})=\mathcal{W}_{p}(\mu,\zeta)+\mathcal{W}_{p}(\zeta,\nu).  
\]

\item Assume that $\mu_{n}$ is a sequence of probability measures in $\mathcal{P}_{as}(\T)$ converging weakly to $\mu\in \mathcal{P}_{as}(\T)$ and similarly  $\nu_{n}$ converging weakly to $\nu$ with, $\nu_{n},\nu\in\mathcal{P}_{as}(\T)$.  We claim that 
\[
\mathcal{W}_{p}(\mu_{n},\nu_{n})\xrightarrow[n\to\infty]{} \mathcal{W}_{p}(\mu,\nu).
\]

To see this, we argue by contradiction.  Assume that for some $\epsilon>0$ and a subsequence $n_{k}$ we have  
\[
|\mathcal{W}_{p}(\mu_{n_{k}},\nu_{n_{k}}) - \mathcal{W}_{p}(\mu,\nu)|\ge\epsilon.
\] 

Now, we can find $u_{n_{k}}\in[0,2\pi)$ such that 
\[
\int x\,(\bar{\mu}_{n_{k}})_{u_{n_{k}}}(dx)=\int x\,(\bar{\nu}_{n_{k}})_{u_{n_{k}}}(dx).
\] 
On a further subsequence of $n_{k}$, say $n_{k,1}$, we can assume that $u_{n_{k,1}}$ converges to $u$.  By the fact that the measures involved do not have atoms, it is an easy exercise to check that $(\bar{\mu}_{n_{k,1}})_{u_{n_{k,1}}}$ converges weakly to $\bar{\mu}_{u}$ and similarly, $(\bar{\nu}_{n_{k,1}})_{u_{n_{k,1}}}$ converges to $\bar{\nu}_{u}$.   In addition, again by the fact that the measures involved do not have atoms, 
\[
\int x \,\bar{\mu}_{u}(dx)=\int x\,\bar{\nu}_{u}(dx).
\]
Now, since 
\[
\mathcal{W}_{p}(\mu_{n_{k}},\nu_{n_{k}})=W_{p}((\bar{\mu}_{n_{k}})_{u_{n_{k}}},(\bar{\nu}_{n_{k}})_{u_{n_{k}}}) \text{ and }\mathcal{W}_{p}(\mu,\nu)=W_{p}(\bar{\mu}_{u},\bar{\nu}_{u}),
\]
we get by taking the limits and using the fact that $W_{p}$ is continuous with respect to the weak limits that 
\[
\mathcal{W}_{p}(\mu_{n_{k,1}},\nu_{n_{k,1}})\xrightarrow[k\to\infty]{}\mathcal{W}_{p}(\mu,\nu),
\]
which is a contradiction with our initial assumption.   This gives the first implication of the claim.   

For the reverse implication, if we have 
\[
\mathcal{W}_{p}(\mu_{n},\mu)\xrightarrow[k\to\infty]{}0
\]
then $\mu_{n}$ must converge in weak topology to $\mu$.  Otherwise, we can extract a subsequence $n_{k}$ such that for some fixed $\epsilon>0$, we have
\[
W_{p}(\mu_{n_{k}},\mu)\ge\epsilon
\]
where here $W_{p}$ is the standard Wasserstein distance on the space of probability measures on $\T$.   Indeed, this is so because the distance $W_{p}$ induces the topology of weak convergence on $\T$.   Now, using a further subsequence we can find a subsequence of $\mu_{n_{k}}$ which is weakly convergent to some measure $\nu$.  Using the first implication and the convergence above we obtain in the first place that $W_{p}(\mu,\nu)\ge\epsilon$ and on the other hand $\mathcal{W}_{p}(\mu,\nu)=0$ which is a contradiction.  Thus $\mu_{n}$ must converge weakly to $\mu$.

\item Assume $u$ is such that $\int x\bar{\mu}_{u}(dx)=\int x \bar{\alpha}_{u}(dx)$.  Because $\alpha$ is the Haar measure on $\T$, we certainly have $\int x \bar{\alpha}_{u}(dx)=u+\pi$.  Now if we take the transport map $\theta$ which maps $\bar{\alpha}_{u}$ into $\bar{\mu}_{u}$, we claim that $|\theta(x)-x|\le \pi$.  Indeed, if this were not so, then we would have $z$, say for simplicity, $z\in[u,u+\pi)$ such that $\theta(z)>z+\pi$.  Since $\theta$ is non-decreasing, we also get that $\theta(y)\ge z+\pi$ for all $y\in [z,u+2\pi)$.   On the other hand, 
\[
\begin{split}
2\pi(u+\pi)&=\int_{u}^{u+2\pi}\theta(x)dx=\int_{u}^{z}\theta(x)dx+\int_{z}^{u+2\pi}\theta(x)dx \\ 
&\ge \int_{u}^{z}udx+\int_{z}^{u+2\pi}(z+\pi)dx=u(z-u)+(z+\pi)(u+2\pi-z) \\
\end{split}
\]
which leads to $z\ge u+\pi$ in contradiction to $z\in[u,u+\pi)$.  A similar argument can be run to prove that we can not have $z\ge u+\pi$ such that $|\theta(z)-z|>\pi$.   Since the map $\theta$ is such that $|\theta(x)-x|\le \pi$, it means that the segment on the circle between $x$ and $\theta(x)$ is a geodesic segment and such $\theta$ is the transportation map for $W_{2}(\mu,\nu)$ on the circle.   \qedhere
 
\end{enumerate}

\end{proof}

As in the classical case there is a dual formulation of the distance $\mathcal{W}_{p}$ which is given next.   
In the following statement periodic functions are periodic of period $2\pi$. 

\begin{theorem}\label{t:W}
\begin{enumerate}
\item Given $p\ge1$ and two measures $\mu,\nu\in\mathcal{P}(\T)$ such that $\mathcal{W}_{p}(\mu,\nu)<\infty$, we have 
\begin{equation}\label{e:t20}
\mathcal{W}_{p}^{p}(\mu,\nu)=\sup_{(\lambda,f,g)}\left\{ \int f\,d\mu+\int g\,d\nu:  f(x)+g(y)\le |x-y|^{p}+\lambda(x-y), \,\forall x,y\in\R \right\},
\end{equation}
where the supremum is taken over all triples $(\lambda,f,g)$ with $\lambda\in \R$ and  $f,g:\T\to\R$ smooth functions (identified as periodic continuous functions on $\R$).  
\item Assume that $\lambda\in\R$ and $f$ is a continuous bounded function on the real line.  Define now
\[
(\mathcal{U}^{\lambda}_{t}f)(x)=\inf_{y\in\R}\left\{f(y)+\frac{(x-y)^{2}}{t}+\lambda(x-y)\right\}.
\] 
Then, $f_{t}(x)=(\mathcal{U}^{\lambda}_{t}f)(x)$ satisfies
\[
\partial_{t}f_{t}=-\frac{(f_{t}'-\lambda)^{2}}{4} \text{ with }f_{0}=f.
\]
In addition, if $f$ is a $T$-periodic functions, then $\mathcal{U}^{\lambda}_{t}f$ is also a $T$-periodic function.  
\item We have for any measures $\mu,\nu\in \mathcal{P}(\T)$ with $\mathcal{W}_{2}(\mu,\nu)<\infty$, 
\[
\mathcal{W}_{2}^{2}(\mu,\nu)=\sup_{(\lambda,f)}\left\{ \int \mathcal{U}^{\lambda}_{1}f\,d\mu- \int f\,d\nu \right\},
\]
where the supremum is taken over all $\lambda\in\R$ and periodic smooth functions $f$.   

\item For $p=1$ the following characterization holds: 
\begin{equation}\label{e:t21}
\mathcal{W}_{1}(\mu,\nu)=\sup_{\lambda,f}\left\{ \int f\,d(\mu-\nu): |f(x)-f(y)-\lambda(x-y)|\le |x-y| \right\}
\end{equation}
where the supremum is taken over all $\lambda\in\R$ and continuous periodic $f$ and we assume that $\mathcal{W}_{1}(\mu,\nu)<\infty$.   
\end{enumerate}
\end{theorem}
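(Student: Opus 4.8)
The plan is to prove (1) first, extract (3) and (4) from it by optimizing over one of the two potentials, and treat (2) as an independent Hopf--Lax computation (it is the ingredient that makes the optimal potential in (3) periodic). I would carry out the argument for atomless $\mu,\nu$, the regime relevant for the functional inequalities and where $\mathcal{W}_p<\infty$ by Proposition~\ref{p:W}. For the easy half of (1), namely $\sup_{(\lambda,f,g)}\{\cdots\}\le\mathcal{W}_p^{p}(\mu,\nu)$: given an admissible triple, pick $u,v\in[0,2\pi]$ with $\int x\,\bar\mu_u(dx)=\int x\,\bar\nu_v(dx)$ (Proposition~\ref{p:W}(6)); since $f,g$ are periodic, $\int f\,d\mu=\int f\,d\bar\mu_u$ and $\int g\,d\nu=\int g\,d\bar\nu_v$, and integrating $f(x)+g(y)\le|x-y|^{p}+\lambda(x-y)$ against any coupling of $\bar\mu_u$ and $\bar\nu_v$ kills the $\lambda(x-y)$ term because it contributes $\lambda\big(\int x\,d\bar\mu_u-\int x\,d\bar\nu_v\big)=0$; hence $\int f\,d\mu+\int g\,d\nu\le W_p^{p}(\bar\mu_u,\bar\nu_v)=\mathcal{W}_p^{p}(\mu,\nu)$.

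For the reverse inequality in (1) the point is to periodize the one-dimensional Kantorovich potentials at the cost of introducing $\lambda$. I would fix an admissible pair $u,v$ together with the monotone transport map $\theta\colon[u,u+2\pi)\to[v,v+2\pi)$, $\theta_{\#}\bar\mu_u=\bar\nu_v$, and extend it $2\pi$-equivariantly to $\Theta\colon\R\to\R$, $\Theta(x+2\pi)=\Theta(x)+2\pi$; then $\Theta$ is non-decreasing and the displacement $x-\Theta(x)$ is $2\pi$-periodic. A Kantorovich potential $\Phi$ for the cost $|x-y|^{p}$ can be chosen so that at differentiability points $\Phi'(x)=p\,|x-\Theta(x)|^{p-1}\,\mathrm{sign}(x-\Theta(x))$, which is $2\pi$-periodic; consequently, setting $\lambda:=-\tfrac1{2\pi}\int_0^{2\pi}\Phi'$, the functions $f(x):=\Phi(x)+\lambda x$ and $g(y):=\Phi^{c}(y)-\lambda y$ (with $\Phi^{c}(y)=\inf_x\{|x-y|^{p}-\Phi(x)\}$) are both periodic, because $\Phi(x+2\pi)-\Phi(x)$ and $\Phi^{c}(y+2\pi)-\Phi^{c}(y)$ are the constants $-2\pi\lambda$ and $2\pi\lambda$. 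From $\Phi(x)+\Phi^{c}(y)\le|x-y|^{p}$ one obtains $f(x)+g(y)\le|x-y|^{p}+\lambda(x-y)$ for all $x,y$, with equality on the graph of $\Theta$; integrating this equality against $\bar\mu_u$ and using the mean constraint to cancel the linear terms gives $\int f\,d\mu+\int g\,d\nu=W_p^{p}(\bar\mu_u,\bar\nu_v)=\mathcal{W}_p^{p}(\mu,\nu)$, after which a mollification of $f,g$ (costing $O(\varepsilon)$) yields the continuous admissible triple. The step I expect to be the main obstacle is exactly this one: justifying that $\Phi$ can be selected with periodic derivative and that its restrictions to the fundamental domains are genuine Kantorovich potentials for the pair $(\bar\mu_u,\bar\nu_v)$ --- this is one-dimensional optimal transport on the circle with a prescribed translation number, and a Sion-type minimax over $(u,v)$ and the multiplier $\lambda$ would be an alternative route to the same identity.

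For (2), I would write $(\mathcal{U}^{\lambda}_{t}f)(x)=\lambda x+\inf_y\{(f(y)-\lambda y)+(x-y)^{2}/t\}$, reducing everything to $\lambda=0$. The function $v(t,x)=\inf_y\{h(y)+(x-y)^{2}/t\}$ is the classical Hopf--Lax solution of $\partial_t v+\tfrac14(\partial_x v)^{2}=0$ with $v(0,\cdot)=h$ (Lagrangian $L(q)=q^{2}$, Hamiltonian $H(p)=p^{2}/4$); applying it to $h=f-\lambda(\cdot)$ and substituting $\partial_x f_t=\lambda+\partial_x v$ gives $\partial_t f_t=-\tfrac14(\partial_x f_t-\lambda)^{2}$, while $f_0=f$ because $(x-y)^{2}/t$ concentrates at $y=x$ as $t\downarrow0$. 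Periodicity of $\mathcal{U}^{\lambda}_{t}f$ for periodic $f$ is immediate from the substitution $y\mapsto y+2\pi$ in the infimum.

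For (3), apply (1) with $p=2$: for fixed $\lambda$ and $g$ the pointwise-optimal $f$ is $f(x)=\inf_y\{(x-y)^{2}+\lambda(x-y)-g(y)\}=(\mathcal{U}^{\lambda}_{1}h)(x)$ with $h:=-g$, which is periodic by (2); so the supremum in (1) equals $\sup_{(\lambda,h)}\{\int\mathcal{U}^{\lambda}_{1}h\,d\mu-\int h\,d\nu\}$ over continuous periodic $h$, and since $\|\mathcal{U}^{\lambda}_{1}h-\mathcal{U}^{\lambda}_{1}\tilde h\|_{\infty}\le\|h-\tilde h\|_{\infty}$ one may restrict to smooth $h$ by mollification. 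For (4), apply (1) with $p=1$; one may assume $|\lambda|<1$, since otherwise admissibility forces $\int f\,d\mu+\int g\,d\nu\le 0\le\mathcal{W}_1$. The cost $c_\lambda(x,y)=|x-y|+\lambda(x-y)$ satisfies the triangle inequality $c_\lambda(x,z)\le c_\lambda(x,y)+c_\lambda(y,z)$, so the Kantorovich--Rubinstein reduction goes through: replacing $g$ by $-f$ with $f(x)=\inf_y\{c_\lambda(x,y)-g(y)\}$ (finite and periodic because $|\lambda|<1$) does not decrease $\int f\,d\mu+\int g\,d\nu$, the constraint $f(x)-f(y)\le c_\lambda(x,y)$ symmetrized over $x\leftrightarrow y$ becomes $|f(x)-f(y)-\lambda(x-y)|\le|x-y|$, and the objective becomes $\int f\,d(\mu-\nu)$.
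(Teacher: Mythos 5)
Your plan is correct and follows essentially the same route as the paper: part (1) by tilting the potentials by $\lambda x$ so that the classical Kantorovich duality on the line applies to the mean-matched lifts $\bar{\mu}_{u},\bar{\nu}_{v}$ (the matching of means killing the linear term), part (2) by reducing $\mathcal{U}^{\lambda}_{t}$ to the standard infimum-convolution semigroup via $f\mapsto f-\lambda(\cdot)$, and parts (3) and (4) by optimizing out one potential through inf-convolution. The only real divergence is in the hard direction of (1): the paper fixes a suitable $g$ on $[v,v+2\pi)$ and defines $f(x)=|\theta(x)-x|^{p}-g(\theta(x))+\lambda(\theta(x)-x)$, extending everything periodically, whereas you construct a Kantorovich potential $\Phi$ with $2\pi$-periodic derivative and tilt by the specific $\lambda=-\frac{1}{2\pi}\int_{0}^{2\pi}\Phi'$; this is a cleaner, more explicit way to exhibit the optimizing triple, but the technical point you flag (that the equivariant monotone map $\Theta$ admits a global potential with equality on its graph, i.e.\ $c$-cyclical monotonicity of monotone graphs for the convex cost $|x-y|^{p}$) is exactly the point the paper also passes over when it asserts admissibility of its pair $(f,g)$, and it is standard one-dimensional transport, so it is not a defect relative to the paper. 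Two minor caveats: your restriction to atomless $\mu,\nu$ omits the degenerate case of the stated theorem where no mean-matching lifts exist and $\mathcal{W}_{p}=+\infty$, which the paper settles by noting that for a fixed pair $u,v$ with unequal lifted means one can let $\lambda\to\pm\infty$ in the dual; and in (4) you are in fact more careful than the paper about $|\lambda|<1$ and finiteness of the inf-convolution, which is a welcome addition rather than a discrepancy.
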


Notice that if $\mu,\nu\in\mathcal{P}_{as}(\T)$, then for any $p\ge1$, we obtain that $\mathcal{W}_{p}(\mu,\nu)<\infty$ and thus  the Theorem applies.  

\begin{proof}
\begin{enumerate}
 \item Notice that it is enough to prove \eqref{e:t20} only for the case of functions $f,g$ which are Borel measurable and periodic.  This is a consequence of the fact that for any Borel bounded measurable and periodic function $f:\R\to\R$ we can find a sequence of smooth periodic functions $f_{n}:\R\to\R$ such that 
 \[
 f_{n}(x)\le f(x)\text{ and } \lim_{n\to\infty}f_{n}(x)=f(x).
 \]
 This can be seen by approximating first from below the function $f$ by periodic step functions and then each step function, again from below, by a smooth one.  As a technical point, to justify the proper convergence, we notice that  from $f(x)+g(y)\le |x-y|^{p}+\lambda(x-y)$ and periodicity,  it must be that  $f$ and $g$ are bounded. 
 
With this we can simply enlarge the class of functions in \eqref{e:t20} to any Borel measurable and periodic functions $f,g$ satisfying $f(x)+g(y)\le |x-y|^{p}+\lambda(x-y)$ for all $x,y\in\R$.  
 
 Observe first that for any $u,v$, functions $f,g:\T\to\R$ and $\lambda\in\R$,
\[
\int f\,d\mu+\int g\,d\nu = \int (f(x)-\lambda x) \bar{\mu}_{u}(dx)+\int (g(x)+\lambda x)\bar{\nu}_{v}(dx)+\lambda\left(\int x\bar{\mu}_{u}(dx)-\int x\bar{\nu}_{v}(dx) \right).
 \]

On the other hand, the duality for the standard Wasserstein distance is written as
\[
W_{p}^{p}(\bar{\mu}_{u},\bar{\nu}_{v})=\sup_{h,k}\left\{ \int h \,d\bar{\mu}_{u}+\int k\, d\bar{\nu}_{v}: h(x)+k(y)\le |x-y|^{p} \right\}. 
\]
For any $u,v$ and functions $f,g$ satisfying $f(x)+g(y)\le |x-y|^{p}+\lambda(x-y)$, the functions $h(x)=f(x)-\lambda x$ and $k(x)=g(x)+\lambda x$ satisfy $h(x)+k(y)\le |x-y|^{p}$.  Thus, 
\[
\int f\,d\mu+\int g\,d\nu \le  W_{p}^{p}(\bar{\mu}_{u},\bar{\nu}_{v})+\lambda\left(\int x\bar{\mu}_{u}(dx)-\int x\bar{\nu}_{v}(dx) \right)
 \]

Therefore, if $u,v$ are such that $\int x\,\bar{\mu}_{u}(dx)-\int x\,\bar{\nu}_{v}(dx)=0$ we obtain that
 \begin{equation}\label{es:2}
 \mathcal{W}_{p}^{p}(\mu,\nu)\ge\sup_{(\lambda,f,g)}\left\{ \int f\,d\mu+\int g\,d\nu:  f(x)+g(y)\le |x-y|^{p}+\lambda(x-y), \,\forall x,y\in\R \right\}.
 \end{equation}
 This still holds if there are no $u,v$ such that $\int x\,\bar{\mu}_{u}(dx)=\int x\,\bar{\nu}_{v}(dx)$ because in this case,  $\mathcal{W}_{p}(\mu,\nu)=+\infty$ but we do not need this case.    
 
 For the reverse inequality we mimic the proof of the Kantorovich duality as presented in \cite[Page 26]{Villani2}.   First, for $p\ge1$ and $\lambda\in\R$ we define the cost function
 \[
 c_{\lambda,p}(x,y)=\begin{cases} 
 |x-y|^{p}-\lambda(x-y)+(p-1)\left(\frac{|\lambda|}{p} \right)^{\frac{p}{p-1}} & p>1\text{ and any }\lambda\\ 
 |x-y|-\lambda(x-y) & p=1, \lambda\in(-1,1).
 \end{cases}  
 \]
 Notice that $c(x,y)\ge0$ and in the case $p=1$ we will restrict the values of $\lambda$ to the interval $(-1,1)$ and the difference from the cost function $|x-y|^{p}-\lambda(x-y)$ is just a constant.  Within this setting, the reverse of \eqref{es:2} becomes equivalent to proving that for fixed $\lambda\in\R$ (with $\lambda\in(-1,1)$ for $p=1$) and for any $u,v\in[0,2\pi]$, 
 \begin{equation}\label{es:1}
 \sup_{(f,g)}\left\{ \int f\,d\mu+\int g\,d\nu:  f(x)+g(y)\le c_{\lambda,p}(x,y), \,\forall x,y\in\R \right\} = \inf_{\pi\in \Pi(\bar{\mu}_{u},\bar{\nu}_{v})} \int c_{\lambda,p}(x,y)\pi(dx,\, dy), 
 \end{equation}
 where $\Pi(\zeta,\eta)$ is the set of probability measures on $\R\times \R$ with marginals $\zeta, \eta$.  
 
 To show \eqref{es:1} we use the Fenchel-Rockafeller duality Theorem (see \cite[Theorem 1.9]{Villani2}).  To do this we consider the space $E$ of continous and periodic functions on $\R\times\R$ endowed with the supremum norm.  This is the same as the space of continuous functions on $\T\times\T$ and its dual is the space of Borel measures with the total variation norm on $\T\times \T$ which can be identified with the space of Borel measures on $[u,u+2\pi)\times [v,v+2\pi)$.  The function $c_{\lambda,p}$ is certainly continuous on $\R\times \R$.  Now we introduce the functionals $\Theta:E\to\R\cup\{\infty\}$ and $\Xi:E^{*}\to \R\cup\{ \infty\}$, 
 \[
 \Theta(w)=\begin{cases}
 0 & \text{ if } w(x,y)\ge -c_{\lambda,p}(x,y) \\ 
 +\infty & \text{else}
 \end{cases}
 \text{ and }
 \Xi(w)=\begin{cases}
 \int f \,d\mu+\int g \,d\nu & \text{ if } w(x,y)=f(x)+g(y) \\
 +\infty & \text{otherwise}.
 \end{cases}
 \]
With these functionals, the rest of the proof of \eqref{es:1} follows exactly the same proof as in \cite[Pages 25-26]{Villani2} using the duality Theorem already mentioned.  
 
 Once we established \eqref{es:1}, we can simply prove the reverse inequality of \eqref{es:2}.   Indeed \eqref{es:1} combined with the fact that $c_{\lambda,p}(x,y)-|x-y|^{p}+\lambda(x-y)$ is constant,  we now can write for fixed $p\ge1$ and $\lambda\in\R$ (with $\lambda\in(-1,1)$ if $p=1$)
 \[
 \begin{split}
 \sup_{(\lambda,f,g)}&\left\{ \int f\,d\mu+\int g\,d\nu:  f(x)+g(y)\le |x-y|^{p}-\lambda(x-y), \,\forall x,y\in\R \right\} \\
 & \ge\inf_{\pi\in \Pi (\bar{\mu}_{u},\bar{\nu}_{v})} \int (|x-y|^{p}-\lambda(x-y))\pi(dx,\, dy).
 \end{split}
 \]
 If we pick $u,v$ such that $\int x\bar{\mu}_{u}(dx)=\int x\bar{\nu}_{v}(dx)$ (which is now possible because $\mathcal{W}_{p}(\mu,\nu)<\infty$), then we obtain that
\[
\begin{split}
\sup_{(\lambda,f,g)}&\left\{ \int f\,d\mu+\int g\,d\nu:  f(x)+g(y)\le |x-y|^{p}-\lambda(x-y), \,\forall x,y\in\R \right\} \\
& \ge\inf_{\pi\in \Pi(\bar{\mu}_{u},\bar{\nu}_{v})} \int |x-y|^{p}\pi(dx,\, dy)=W_{p}^{p}(\bar{\mu}_{u},\bar{\nu}_{v})
\end{split}
 \]
and upon taking the supremum over such $u,v$ gives the conclusion that 
\[
\sup_{(\lambda,f,g)}\left\{ \int f\,d\mu+\int g\,d\nu:  f(x)+g(y)\le |x-y|^{p}-\lambda(x-y), \,\forall x,y\in\R \right\} \ge \mathcal{W}_{p}^{p}(\mu,\nu)
\]
which finishes the proof.

\item Set
\[
(U_{t}f)(x)=\inf\left\{ f(y)+\frac{(x-y)^{2}}{t} \right\}.
\]
It is known from the Hopf-Lax formula (\cite[Theorem 4 and 5, Section 3.3]{Evans}) that the infimum convolution semigroup satisfies for $\tilde{f}_{t}=U_{t}f$, 
\[
\partial_{t}\tilde{f}_{t}=-\frac{1}{4}(\tilde{f}_{t}')^{2} \text{ with }\tilde{f}_{0}(x)=f(x).
\]
Now it becomes a simple exercise to check that 
\[
(\mathcal{U}_{t}^{\lambda}f)(x)=\lambda x+(U_{t}f_{\lambda})(x) \text{ with }f_{\lambda}(x)=f(x)-\lambda x.
\]
From this, the rest follows easily.  
\item We just  have to observe that for a given function $f$, the best function $g$ such that $g(x)-f(y)\le (x-y)^{2}+\lambda(x-y)$ is given by $g=\mathcal{U}^{\lambda}_{1}f$.  The rest is a consequence of the first part of the Theorem.

\item  From the first part of the Theorem, 
\[
\mathcal{W}_{1}(\mu,\nu)=\sup_{\lambda,f,g}\left\{ \int fd\mu-\int g d\nu: f(x)\le g(y)+|x-y|+\lambda(x-y) \right\}.
\]
Given a $\lambda\in\R$ and a function $g$, the best choice of $f$ is determined by 
\[
f(x)=\inf_{y}\{ g(y)+|x-y|+\lambda(x-y) \}. 
\]  
In particular, $f(x)\le g(x)$ and 
\[
\int f d\mu-\int gd\nu\le \int f d(\mu-\nu)
\] 
Denoting now $f(x)=\inf_{y} \{g(y)+|x-y|+\lambda(x-y)\}$, for every $\epsilon>0$ we can find a $y_{\epsilon}$ such that 
\[
f(x)-f(x')\le \epsilon+g(y_{\epsilon})+|x-y_{\epsilon}|+\lambda(x-y_{\epsilon})-(g(y_{\epsilon})+|x'-y_{\epsilon}|+\lambda(x'-y_{\epsilon}))\le \epsilon+ |x-x'|+\lambda(x-x').
\] 
Therefore we get that 
\[
|f(x)-f(x')-\lambda(x-x')|\le |x-x'| \text{ for all }x,x'.  
\] 
Hence, 
\[
\mathcal{W}_{1}(\mu,\nu)\le \sup_{\lambda,f}\left\{ \int f\,d(\mu-\nu): |f(x)-f(y)-\lambda(x-y)|\le |x-y| \right\}.
\]
On the other hand, if $f$ is as above (namely $|f(x)-f(y)-\lambda(x-y)|\le |x-y|$), take $g(x)=\inf_{y}\{f(y)+|x-y|+\lambda(x-y)\}$ and observe now that $f(x)= g(x)$, and moreover $f(x)-g(y)\le |x-y|+\lambda(x-y)$, from which
\[
\int fd\mu-\int fd\nu= \int fd\mu-\int gd\nu. 
\]
This is now enough to conclude that 
\[
\mathcal{W}_{1}(\mu,\nu)\ge \sup_{\lambda,f}\left\{ \int f\,d(\mu-\nu): |f(x)-f(y)-\lambda(x-y)|\le |x-y| \right\}
\]
which ends  the proof.  \qedhere

\end{enumerate}
\end{proof}

\section{Transportation, Log-Sobolev and HWI inequalities}\label{S:8}

\subsection{Transportation Inequality}

In this section we discuss the transportation inequality.  The approach is inspired by \cite{LP} and \cite{IP}.   

\begin{definition} We say that a potential $Q$ on $\T$ satisfies the transportation inequality if there is a positive $\rho>0$, such that for any other smooth measure $\mu$ on $\T$, 
\begin{equation}\label{e1:1t}
\rho \mathcal{W}_{2}^{2}(\mu,\mu_{Q})\le E_{Q}(\mu)-E_{Q}(\mu_{Q})
\end{equation}
where $\mathcal{W}_{2}(\mu,\nu)$ is the distance defined in \eqref{e:W}. 

We refer to $\eqref{e1:1t}$ as $T(\rho)$.  
\end{definition}

\begin{theorem}\label{t:4}
Any $C^{3}$ potential  $Q:\mathbf{T}\to\R$  such that $Q''\ge \rho -1/2$ for some $\rho>0$ satisfies $T(\rho/2)$.   

\end{theorem}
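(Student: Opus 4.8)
The plan is to run a displacement--convexity argument along a geodesic for the modified distance $\mathcal{W}_{2}$, in the spirit of \cite{LP,IP}, and to exploit the mean--zero constraint built into $\mathcal{W}_{2}$ in order to compensate for the fact that here $Q''$ is only bounded below by the \emph{possibly negative} constant $\rho-\tfrac12$. First I would reduce to a convenient class of $\mu$. Since $\mu$ is smooth it has bounded density, so $E_{Q}(\mu)<\infty$; replacing $\mu$ by $\mu_{\varepsilon}=(1-\varepsilon)\mu+\varepsilon\alpha$ produces a measure whose density with respect to $\alpha$ is bounded above and below by a positive constant, and since $\int\log|z-w|\,\alpha(dw)=0$ (Corollary~\ref{c:1}) one checks $E_{Q}(\mu_{\varepsilon})\to E_{Q}(\mu)$, while Proposition~\ref{p:W} gives $\mathcal{W}_{2}(\mu_{\varepsilon},\mu_{Q})\to\mathcal{W}_{2}(\mu,\mu_{Q})$ as $\varepsilon\to0$; so it suffices to prove $T(\rho/2)$ for such $\mu$. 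On the other side, applying Corollary~\ref{c:2} with $\beta=\rho-\tfrac12+\tfrac1{2\log2}$ (positive because $\rho>0$ and $\log2<1$) shows $\mu_{Q}$ has a continuous density $\ge 2\beta\log2>0$; in particular $\mu_{Q}$ is atomless with full support. (Incidentally $\int Q''\,d\alpha=0$ by rotation invariance of $\alpha$, so $\rho\le\tfrac12$, in agreement with Proposition~\ref{p:2}.)

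Next, by Proposition~\ref{p:W} applied to the atomless measures $\mu_{Q}$ and $\mu$, fix $v,u\in[0,2\pi]$ with $\int x\,\sigma(dx)=\int x\,\bar{\mu}_{u}(dx)$, where $\sigma$ denotes the lift of $\mu_{Q}$ to $[v,v+2\pi)$ and $\tau=\bar{\mu}_{u}$ the lift of $\mu$ to $[u,u+2\pi)$; then $\mathcal{W}_{2}^{2}(\mu,\mu_{Q})=W_{2}^{2}(\sigma,\tau)$. Let $\theta\colon[v,v+2\pi)\to[u,u+2\pi)$ be the monotone optimal transport map with $\theta_{\#}\sigma=\tau$, and set $h=\theta-\mathrm{id}$, so that $\int h^{2}\,d\sigma=\mathcal{W}_{2}^{2}(\mu,\mu_{Q})$ and, crucially,
\[
\int h\,d\sigma=\int\theta\,d\sigma-\int x\,\sigma(dx)=\int x\,\bar{\mu}_{u}(dx)-\int x\,\sigma(dx)=0 ,
\]
the vanishing of this displacement mean being exactly the constraint encoded in $\mathcal{W}_{2}$, and the reason the estimate below closes. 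Since $\sigma$ has density bounded below and $\tau$ has density bounded above and below, $\theta$ is bi-Lipschitz and strictly increasing, which disposes of the regularity issues encountered below. Interpolating by $\theta_{t}=(1-t)\,\mathrm{id}+t\,\theta$ for $t\in[0,1]$ and setting $\mu_{t}=\exp_{\#}((\theta_{t})_{\#}\sigma)$, we get $\mu_{0}=\mu_{Q}$, $\mu_{1}=\mu$, and, writing $V(x)=Q(e^{ix})$ (which is $2\pi$-periodic, so no wrapping issue arises in this formula),
\[
g(t):=E_{Q}(\mu_{t})=\int V(\theta_{t}(x))\,\sigma(dx)-\iint\log\bigl|e^{i\theta_{t}(x)}-e^{i\theta_{t}(y)}\bigr|\,\sigma(dx)\,\sigma(dy).
\]

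The heart of the argument is the bound $g''(t)\ge\rho\,\mathcal{W}_{2}^{2}(\mu,\mu_{Q})$ on $[0,1]$. Differentiating twice in $t$, the potential term contributes $\int V''(\theta_{t}(x))\,h(x)^{2}\,\sigma(dx)\ge(\rho-\tfrac12)\int h^{2}\,d\sigma=(\rho-\tfrac12)\,\mathcal{W}_{2}^{2}(\mu,\mu_{Q})$, using $V''\ge\rho-\tfrac12$. For the logarithmic term, write $\log|e^{ia}-e^{ib}|=\log2+\log\bigl|\sin\tfrac{a-b}{2}\bigr|$ and let $\delta_{t}(x,y)=\theta_{t}(x)-\theta_{t}(y)$, which is affine in $t$ with slope $h(x)-h(y)$; a direct computation gives
\[
-\,\partial_{t}^{2}\log\bigl|e^{i\theta_{t}(x)}-e^{i\theta_{t}(y)}\bigr|=\tfrac14\,\csc^{2}\!\Bigl(\tfrac{\delta_{t}(x,y)}{2}\Bigr)\,(h(x)-h(y))^{2}\ \ge\ \tfrac14\,(h(x)-h(y))^{2},
\]
since $\csc^{2}\ge1$ everywhere and, $\theta_{t}$ being strictly increasing with values in an interval of length $<2\pi$, one has $\delta_{t}(x,y)\in(-2\pi,2\pi)\setminus\{0\}$ off the $\sigma\otimes\sigma$-diagonal, so the cosecant is genuinely defined. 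Integrating and using $\int h\,d\sigma=0$,
\[
-\iint\partial_{t}^{2}\log\bigl|e^{i\theta_{t}(x)}-e^{i\theta_{t}(y)}\bigr|\,\sigma(dx)\,\sigma(dy)\ \ge\ \tfrac12\Bigl(\int h^{2}\,d\sigma-\bigl(\textstyle\int h\,d\sigma\bigr)^{2}\Bigr)=\tfrac12\,\mathcal{W}_{2}^{2}(\mu,\mu_{Q}),
\]
so $g''(t)\ge\bigl(\rho-\tfrac12+\tfrac12\bigr)\mathcal{W}_{2}^{2}(\mu,\mu_{Q})=\rho\,\mathcal{W}_{2}^{2}(\mu,\mu_{Q})$. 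Since $\mu_{Q}$ minimizes $E_{Q}$ over all probability measures on $\T$ and $\{\mu_{t}\}_{t\in[0,1]}$ is a curve of probability measures through $\mu_{Q}$ at $t=0$, we have $g(t)\ge g(0)$ on $[0,1]$, hence $g'(0)\ge0$; Taylor's formula with integral remainder then gives
\[
E_{Q}(\mu)-E_{Q}(\mu_{Q})=g(1)-g(0)=g'(0)+\int_{0}^{1}(1-t)\,g''(t)\,dt\ \ge\ \tfrac{\rho}{2}\,\mathcal{W}_{2}^{2}(\mu,\mu_{Q}),
\]
which is $T(\rho/2)$.

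I expect the principal difficulty to be the analytic bookkeeping around the logarithmic singularity: one must justify differentiating $g$ twice under the integral sign and the finiteness of the intermediate energies, including near the seam $\{v,v+2\pi\}$ of the chosen lift, where $e^{i\theta_{t}(x)}$ and $e^{i\theta_{t}(y)}$ collide on $\T$. This is precisely why I reduce at the outset to $\mu$ with density bounded above and below: then $\theta$ is bi-Lipschitz, $|\theta_{t}(x)-\theta_{t}(y)|\asymp|x-y|$ uniformly in $t$, and every integrand is dominated; note that the $\csc^{2}\ge1$ step, by contrast, uses only a pointwise lower bound on a nonnegative integrand, so it needs no upper control. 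The one genuinely new ingredient compared with the real line is the use of the identity $\int h\,d\sigma=0$ to pass from the ``variance'' $\iint(h(x)-h(y))^{2}$ to the full $L^{2}(\sigma)$-norm of $h$; this is the footprint of the $SU(n)$ versus $U(n)$ phenomenon and the reason $\mathcal{W}_{2}$, rather than $W_{2}$, is the natural quantity. Finally, the loss from $\rho$ to $\rho/2$ enters only at the last Taylor step, consistent with the conjecture that $T(\rho)$ is the sharp form.
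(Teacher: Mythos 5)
Your proof is correct and gives the stated constant $\rho/2$, and at bottom it runs on the same two pillars as the paper's argument: the mean-zero displacement $\int h\,d\sigma=0$ coming from the matched lifts (which is what converts $\iint(h(x)-h(y))^2$ into $2\int h^2\,d\sigma$), and the elementary bound $\csc^2\ge 1$, which is exactly the concavity of $\tfrac12x^2+\log\sin x$ behind the paper's pointwise inequality \eqref{1}. The organization, however, is genuinely different. The paper argues in one step: it expands $V(\theta(x))-V(x)$ using $V''\ge\rho-\tfrac12$, identifies the first-order term through the Euler--Lagrange identity $V'(x)=\mathrm{p.v.}\int\cot((x-y)/2)\,\bar\mu_{Q}(dy)$ (available on the whole interval by Corollary~\ref{c:2}), symmetrizes using the equal means, and closes with \eqref{1}; no regularization of $\mu$ is needed and the argument applies directly to any finite-energy measure. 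You instead run a displacement-convexity argument along $\theta_t=(1-t)\,\mathrm{id}+t\theta$: the bound $g''\ge\rho\,\mathcal{W}_2^2$, the soft first-order condition $g'(0)\ge 0$ from minimality of $\mu_Q$, and Taylor with integral remainder. What your route buys is that you never invoke the explicit equilibrium equation or the principal-value Hilbert transform, only abstract optimality at $t=0$; what it costs is the preliminary regularization $\mu_\varepsilon=(1-\varepsilon)\mu+\varepsilon\alpha$ and the justification of differentiating twice under the integral, in particular at the seam of the lift. That last point does close, but for a slightly finer reason than ``$|\theta_t(x)-\theta_t(y)|\asymp|x-y|$'': since the regularized $\tau$ has full support, $\theta$ (hence each $\theta_t$) is bi-Lipschitz with full range, so as $x\to(v+2\pi)^-$ and $y\to v^+$ one has $h(x)-h(y)=O\bigl((v+2\pi-x)+(y-v)\bigr)$ while $2\pi-\delta_t(x,y)$ is bounded below by a constant times the same quantity; hence $\csc^2(\delta_t/2)\,(h(x)-h(y))^2$ stays bounded near the seam even though $\csc^2(\delta_t/2)$ blows up there. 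It would be worth writing out this cancellation explicitly, since it is the one spot where ``every integrand is dominated'' is not immediate; with it, your proof is complete.
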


\begin{proof}  For any probability measure $\mu$ on $\T$ with finite energy $E_{Q}(\mu)$, it is clear that $\mu$ does not have atoms.  We may also assume that the measure $\mu$ has a smooth density, otherwise we can use careful approximations as in \cite{HPU1}.   Thus according to (6) in Proposition~\ref{p:W} we can choose $t$ and $\theta=\theta_{t}:[t,t+2\pi) \to [t,t+2\pi)$ such that $\theta$ is non-decreasing and 
\[
\mathcal{W}_{2}^2(\mu,\mu_{Q})=W_{2}^2(\bar{\mu}_{t},\bar{\mu}_{Q,t}) = \int (x-\theta(x))^{2}\bar{\mu}_{Q,t}(dx).
\]
The transportation inequality becomes now (recall that $V(x)=Q(e^{ix})$)
\[
\frac{\rho}{2} \int(\theta(x)-x)^{2}\bar{\mu}_{Q,t}(dx)\le \int (V(\theta(x))-V(x))\bar{\mu}_{Q,t}(dx)-\iint\log\left(\frac{\sin((\theta(x)-\theta(y))/2)}{\sin((x-y)/2)}\right)\bar{\mu}_{Q,t}(dx)\bar{\mu}_{Q,t}(dy).
\]
Since $V''(x)=Q''(e^{ix})\ge (\rho-1/2)$ for $x\in [t,t+2\pi)$ we get that 
\[
V(\theta(x))-V(x)\ge V'(x)(\theta(x)-x)+(\rho/2-1/4)(\theta(x)-x)^{2}
\]
and thus it suffices to prove that 
\begin{equation}\label{e:t2b}
\frac{1}{4}\int (\theta(x)-x)^{2}\bar{\mu}_{Q,t}(dx)\le \int V'(x)(\theta(x)-x)\bar{\mu}_{Q,t}(dx) -\iint\log\left(\frac{\sin((\theta(x)-\theta(y))/2)}{\sin((x-y)/2)}\right)\bar{\mu}_{Q,t}(dx)\bar{\mu}_{Q,t}(dy).
\end{equation}
At this point we can use the variational characterization of the minimizer $\mu_{Q}$ to justify that 
\[
V'(x)=\frac{d}{dx}2\int\log |\sin((x-y)/2)|\bar{\mu}_{Q,t}(dx)=\int\cot((x-y)/2)\bar{\mu}_{Q,t}(dx)
\]
for $x$ in $[t,t+2\pi)$ and the integral above has to be taken in the sense of a principal value integral.  
By Corollary~\ref{c:2} we know that the support of $\bar{\mu}_{Q,t}$ is the whole interval $[t,t+2\pi)$.   From here, and the fact that the means of both $\bar{\mu}_{Q,t}$ and $\bar{\mu}_{t}$ are equal, we obtain the following crucial identities
\begin{align*}
\int (\theta(x)-x)^{2}\bar{\mu}_{Q,t}(dx) & = \frac{1}{2}\iint((\theta(x)-x)-(\theta(y)-y))^{2}\bar{\mu}_{Q,t}(dx)\bar{\mu}_{Q,t}(dy)\\
\int V'(x)(\theta(x)-x)\bar{\mu}_{Q,t}(dx) & =\frac{1}{2}\iint((\theta(x)-x)-(\theta(y)-y))\cot((x-y)/2)\bar{\mu}_{Q,t}(dx)\bar{\mu}_{Q,t}(dy).
\end{align*}
Further, since  $\frac{1}{2}x^{2}+\log(\sin(x))$ is concave on $[0,\pi)$, we have that for $a,b\in[0,\pi)$
\begin{equation}\label{1}
\frac{1}{2}(a-b)^{2}\le (a-b)\cot(b)-\log\left( \frac{\sin(a)}{\sin(b)} \right)
\end{equation}
and this with $a=(\theta(x)-\theta(y))/2$, $b=(x-y)/2$, $x>y$, completes the proof.  \qedhere
\end{proof}

In the case of measures on the real line, the transport inequalities from \cite{LP} for some cases proved to be sharp.  In the present context, the above proof does not give the sharp constants in the case of vanishing potential $Q$ as it is proved in the following.  It might be true that it could be sharp for some other potentials $Q$, but that we do not know.

\begin{proposition}\label{p:6}  For the case of $Q\equiv 0$, $\rho=1/2$, there exists $\epsilon>0$ such that $T((1+\epsilon)/4)$ holds true.  Thus $T(1/4)$  is not sharp for $Q\equiv0$.   

\end{proposition}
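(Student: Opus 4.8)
The starting point is that for $Q\equiv 0$ one has $\mu_{Q}=\alpha$ and $E_{0}(\alpha)=0$ (Corollary~\ref{c:1}), and $\mathcal{W}_{2}(\mu,\alpha)=W_{2}(\mu,\alpha)$ for atomless $\mu$ by part (9) of Proposition~\ref{p:W} (if $\mu$ has an atom then $E_{0}(\mu)=+\infty$ and there is nothing to prove), so the claim is that $\tfrac{1+\epsilon}{4}\,W_{2}^{2}(\mu,\alpha)\le E_{0}(\mu)$ for some fixed $\epsilon>0$. The plan is to sharpen the proof of Theorem~\ref{t:4}. In that proof, with $\theta=\theta_{t}$, $a=(\theta(x)-\theta(y))/2$, $b=(x-y)/2$, the only inequality invoked is \eqref{1}, and for $Q\equiv 0$ the term $\int V'(x)(\theta(x)-x)\,\bar\alpha_{t}(dx)$ is $0$ (equivalently $\mathrm{p.v.}\!\int\cot((x-y)/2)\,d\alpha=0$); integrating \eqref{1} against $\bar\alpha_{t}\otimes\bar\alpha_{t}$ therefore yields not just $T(1/4)$ but the identity
\[
E_{0}(\mu)-\tfrac14 W_{2}^{2}(\mu,\alpha)=\iint R(a,b)\,\bar\alpha_{t}(dx)\,\bar\alpha_{t}(dy),\qquad R(a,b):=(a-b)\cot b-\log\tfrac{\sin a}{\sin b}-\tfrac12(a-b)^{2}.
\]
Differentiating $R$ twice in $a$ gives $R(a,b)=\int_{b}^{a}(a-u)\cot^{2}u\,du=\tfrac12\cot^{2}(\xi)(a-b)^{2}$ for some $\xi$ between $a$ and $b$; hence $R\ge0$, with $R(a,b)=0$ iff $a=b$. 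So the deficit $E_{0}(\mu)-\tfrac14 W_{2}^{2}(\mu,\alpha)$ is nonnegative and vanishes exactly when $\theta=\mathrm{id}$, i.e. $\mu=\alpha$, and what remains is to bound it below by a fixed multiple of $W_{2}^{2}(\mu,\alpha)$.

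I would argue by a dichotomy. Fix $\eta>0$. On the range $W_{2}(\mu,\alpha)\ge\eta$, the functional $G(\mu):=E_{0}(\mu)-\tfrac14 W_{2}^{2}(\mu,\alpha)$ is lower semicontinuous for weak convergence ($E_{0}$ is l.s.c., $W_{2}(\cdot,\alpha)$ is continuous) and $\{W_{2}(\cdot,\alpha)\ge\eta\}$ is weakly compact, so $G$ attains its infimum $m$ there, at some $\mu_{\ast}$; if $m=0$ the identity above forces $R(a,b)=0$ $\bar\alpha_{t}\otimes\bar\alpha_{t}$-a.e., hence $\theta\equiv\mathrm{id}$ and $\mu_{\ast}=\alpha$, contradicting $W_{2}(\mu_{\ast},\alpha)\ge\eta$. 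So $m>0$, and since $W_{2}^{2}(\cdot,\alpha)\le\pi^{2}$ we get $G(\mu)\ge(m/\pi^{2})W_{2}^{2}(\mu,\alpha)$ on this range.

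For the complementary range I would exploit the monotonicity of $\theta$ together with the fact that the source measure is uniform: if $\theta(x_{0})-x_{0}=M$ then monotonicity keeps $\theta-\mathrm{id}\ge M/2$ on an interval of length $M/2$, so $W_{2}^{2}(\mu,\alpha)\gtrsim M^{3}$, i.e. $\|\psi\|_{\infty}\le C\,W_{2}(\mu,\alpha)^{2/3}$ with $\psi:=\theta-\mathrm{id}$. Hence for $W_{2}(\mu,\alpha)\le\eta$ with $\eta$ small, $\|\psi\|_{\infty}$ is uniformly small and $|a-b|=\tfrac12|\psi(x)-\psi(y)|\le\|\psi\|_{\infty}$. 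Fixing a small $\delta>0$, I would restrict $\iint R$ to the set $B_{\delta}$ of pairs with $(x-y)/2$ lying in $[\delta,\tfrac\pi2-\delta]\cup[\tfrac\pi2+\delta,\pi-\delta]$ modulo $\pi$: on $B_{\delta}$ the point $\xi$ lies in $[\tfrac\delta2,\pi-\tfrac\delta2]$ with $|\xi-\tfrac\pi2|\ge\tfrac\delta2$, so $R(a,b)\ge\tfrac18\tan^{2}(\tfrac\delta2)(\psi(x)-\psi(y))^{2}$. It then suffices to have the restricted Poincar\'e estimate $\iint_{B_{\delta}}(\psi(x)-\psi(y))^{2}\,\bar\alpha_{t}(dx)\,\bar\alpha_{t}(dy)\ge\|\psi\|_{L^{2}(\bar\alpha_{t})}^{2}$ for every mean-zero $\psi$, which follows by writing the left side in Fourier as $\sum_{k\neq0}|\widehat\psi(k)|^{2}J_{k}(\delta)$ with $J_{k}(\delta)=\tfrac2\pi\int_{E_{\delta}}\sin^{2}(ks/2)\,ds\ge\tfrac2\pi(\pi-|E_{\delta}^{c}|)\ge1$ uniformly in $k\neq0$ when $\delta$ is small ($E_{\delta}\subset\T$ is the set of admissible shifts, $|E_{\delta}^{c}|=8\delta$). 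Combining, $G(\mu)\ge\tfrac18\tan^{2}(\tfrac\delta2)W_{2}^{2}(\mu,\alpha)$ on this range, and with $\epsilon:=\min\{m/\pi^{2},\tfrac18\tan^{2}(\tfrac\delta2)\}>0$ we obtain $E_{0}(\mu)\ge(\tfrac14+\epsilon)W_{2}^{2}(\mu,\alpha)$ for all $\mu$, i.e. $T((1+4\epsilon)/4)$.

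The hard part will be the regime $\mu\to\alpha$: the pointwise gap $R(a,b)$ in \eqref{1} degenerates both at $a=b$ and when $a$ and $b$ are simultaneously near $\pi/2$, so there is no uniform pointwise strengthening of \eqref{1}, and one is forced onto the spectral fact that the Dirichlet form $\iint\cot^{2}((x-y)/2)(\psi(x)-\psi(y))^{2}\,d\alpha\,d\alpha$ has a positive spectral gap (its value on the $k$-th Fourier mode is $4|k|-2\ge2$), which must be shown to persist after the restriction to $B_{\delta}$. One also has to verify carefully that $\|\psi\|_{\infty}\to0$ as $\mu\to\alpha$ and that the choice of the cut $t$, hence the periodic extension of $\psi$ entering the Fourier computation, does not matter.
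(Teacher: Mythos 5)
Your argument is correct, but it is organized quite differently from the paper's proof, so it is worth comparing the two. The paper never splits into regimes: it upgrades \eqref{1} once and for all to the weighted form \eqref{e:t1cv} with the optimal weight $h(b)$ from \eqref{e:t3}, notes that $h\ge 1$ with equality only at $b=\pi/2$ and $h(0)=h(\pi)=+\infty$, minorizes $h$ by a continuous even $g\ge 1$ touching $1$ only at $\pm\pi/2$, and then proves the weighted Poincar\'e bound \eqref{e:t6b} with constant $2(1+\delta)$ for all mean-zero $\psi$ by showing $\hat{g}(0)-\hat{g}(n)\ge 1+\delta$ uniformly in $n$ (strict inequality for each fixed $n$, plus the limit $\frac{1}{\pi}\int_{0}^{\pi}g>1$ as $n\to\infty$); the gain is therefore uniform over all transport maps, with no compactness argument and no control of $\|\theta-\mathrm{id}\|_{\infty}$ needed. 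You instead write the deficit exactly as $\iint R$ and run a dichotomy: far from $\alpha$ you extract a gap by lower semicontinuity of $E_{0}$ and weak compactness (a soft step, so your $\epsilon$ is not explicit there), while near $\alpha$ you use the monotonicity bound $\|\theta-\mathrm{id}\|_{\infty}\lesssim W_{2}^{2/3}$ to keep the intermediate point $\xi$ away from $\pi/2$ outside a set of shifts of measure $8\delta$, and then your truncated spectral-gap estimate $J_{k}(\delta)\ge 2-16\delta/\pi\ge 1$, which is uniform in $k$ precisely because the weight is an indicator. The Fourier core is shared --- both proofs exploit that \eqref{1} degenerates only at $b=\pi/2$ and that discarding or downweighting a small neighborhood of $\pi/2$ barely affects the spectral gap --- but your decomposition (compactness plus a perturbative regime requiring an $L^{\infty}$ displacement bound) is genuinely different from the paper's single global weighted inequality. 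The paper's route buys uniformity in $\mu$ and, in principle, an explicit constant once $g$ is fixed; yours buys a more elementary uniform-in-$k$ Fourier estimate and a transparent picture of where sharpness can be lost, at the price of an inexplicit constant in the far regime. The two caveats you flag at the end are in fact already handled by what you wrote: the ``restricted'' gap is exactly your $J_{k}\ge 1$ computation, and the irrelevance of the cut $t$ is part (6) of Proposition~\ref{p:W} (any admissible pair of lifts gives the same Wasserstein value, and your Fourier computation is valid for whichever lift you fix).
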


\begin{proof}
For $Q\equiv 0$ we know that $\rho=1/2$ and $\mu_{Q}=\alpha$.   In order to get a sharp inequality, we need to saturate \eqref{1} and that is the case for either $a$ close to $b$ or $b$ close to $\pi/2$.  

An improvement of the inequality \eqref{1} would be of the form
\begin{equation}\label{e:t1cv}
\frac{(a-b)^2 h(b)}{2} \le (a-b)\cot(b)-\log\left( \frac{\sin(a)}{\sin(b)} \right)
\end{equation}
for some function $h(b)$.  Clearly, \eqref{1} gives that we can take $h(b)=1$ for any $b$. The idea is that replacing  $a=(\theta(x)-\theta(y))/2$, $b=(x-y)/2$, $x>y$ and integrating with respect to $x$ and $y$, we obtain that 
\begin{equation}\label{e:t5}
\frac{1}{8}\iint_{-\pi}^{\pi}(\psi(x)-\psi(y))^2 h(|x-y|/2)\,dx\,dy \le -\int_{-\pi}^{\pi}\int_{-\pi}^{\pi}\log\left(\frac{\sin((\theta(x)-\theta(y))/2)}{\sin((x-y)/2)}\right)\,dx\, dy
\end{equation}
with the notation $\psi(x)=\theta(x)-x$.   

The best choice in \eqref{e:t1cv} is the following function
\begin{equation}\label{e:t3}
h(b)=\inf_{a\in[0,\pi]}\frac{2\left( (a-b)\cot(b)-\log\left( \frac{\sin(a)}{\sin(b)} \right)\right)}{(a-b)^{2}}.
\end{equation}
Thus, we try to prove an inequality of the form 
\begin{equation}\label{e:t4bb}
c\int_{-\pi}^{\pi}\psi^{2}(x)\,dx\le \int_{-\pi}^{\pi}\int_{-\pi}^{\pi}(\psi(x)-\psi(y))^2 h(|x-y|/2)\,dx\,dy.
\end{equation}
Notice that we clearly have $h(b)\ge 1$ and thus the above inequality holds with $c=2$ which yields $T(1/4)$.  Nevertheless, the point is that  the above inequality may give a better constant than $c=2$.   

To understand the function $h$, notice that from the Taylor's formula with the integral remainder, we learn that 
\[
\left( (a-b)\cot(b)-\log\left( \frac{\sin(a)}{\sin(b)} \right)\right) = (a-b)^{2}\int_{0}^{1}\frac{1-s}{\sin^{2}(b+s(a-b))}ds
\]
so 
\[
h(b)=\inf_{a\in[0,\pi]}\int_{0}^{1}\frac{2(1-s)}{\sin^{2}(b+s(a-b))}ds.  
\]
It is clear from this representation that $h$ is a continuous function on $(0,\pi)$ and in addition to this, $h(b)\ge1$ with equality only for the case of $b=\pi/2$.   Moreover $h(0)=h(\pi)=+\infty$.   Therefore, we can take a continuous  function $g:[-2\pi,2\pi]\to\R$ such that $g(b)=g(-b)$, $h(|b|/2)\ge g(b)$ and $g(b)\ge1$ for $b\ge0$ with equality only for $b=\pm\pi$.   

With this choice we claim that there is a constant $\delta>0$ such that for any continuous function $\psi$ with $\int_{-\pi}^{\pi}\psi(x)dx=0$ we have
\begin{equation}\label{e:t6b}
2(1+\delta) \int\psi^{2}(x)\,dx\le \iint (\psi(x)-\psi(y))^2 g(|x-y|)\,dx\,dy.
\end{equation}
This in turn will imply that the constant $c$ in \eqref{e:t4bb} satisfies $c>2$, thus combining this with \eqref{e:t5} we obtain that the transportation inequality is $T(c/8)$ with $c/8>1/4$.   

To see why \eqref{e:t6b} is fulfilled, we follow an argument suggested to the author by Michael Loss.  It is easy to see first that $g(t)=\sum_{n} e^{-in t}\hat{g}(n)$ where for an arbitrary function $\phi:[-\pi,\pi]\to\R$ we denote $\hat{\phi}(n)=\frac{1}{2\pi}\int_{-\pi}^{\pi}\phi(t)e^{in t}dx$.  Notice that because $g$ is an even function, the Fourier coefficients $\hat{g}(n)$ are all real for any integer $n\ge0$. After some elementary manipulations with Fourier series, the above inequality translates into 
\[
(1+\delta)\sum_{n\ne 0}|\hat{\psi}(n)|^{2}\le \sum_{n}|\hat{\psi}(n)|^{2}(\hat{g}(0)-\hat{g}(n))
\]
which follows once we prove that for some $\delta$ and any $n\ge0$,
\[
1+\delta \le \hat{g}(0)-\hat{g}(n)=\frac{1}{2\pi}\int_{-\pi}^{\pi}(1-\cos(nx))g(x)dx=\frac{1}{\pi}\int_{0}^{\pi}(1-\cos(nx))g(x)dx.
\]
In the first place, for any finite number of values of $n$, it is clear that 
\[
1=\frac{1}{2\pi}\int_{-\pi}^{\pi}(1-\cos(nx))dx=\frac{1}{\pi}\int_{0}^{\pi}(1-\cos(nx))dx<\frac{1}{\pi}\int_{0}^{\pi}(1-\cos(nx))g(x)dx.
\]
On the other hand, for large $n$, 
\[
\frac{1}{2\pi}\int_{-\pi}^{\pi}(1-\cos(nx))g(x)dx=\frac{1}{\pi}\int_{0}^{\pi}(1-\cos(nx))g(x)dx=\int_{0}^{\pi}(1-\cos(y))\frac{1}{n\pi}\sum_{k=0}^{n-1} g\left(\frac{k\pi+y}{n} \right)dy
\]
which converges as $n\to\infty$ to 
\[
\frac{1}{\pi}\int_{0}^{1}g(t \pi)dt=\frac{1}{\pi}\int_{0}^{\pi}g(s)ds>1.  
\]
Consequently, we can choose a small $\delta>0$ such that for any $n$, 
\[
1+\delta \le \hat{g}(0)-\hat{g}(n)=\frac{1}{2\pi}\int_{-\pi}^{\pi}(1-\cos(nx))g(x)dx,
\]
which ends the proof.  \qedhere
\end{proof}

\begin{conjecture}

For $Q\equiv 0$, $T(1/2)$ holds true.

\end{conjecture}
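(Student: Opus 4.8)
The plan is to reduce $T(1/2)$, for $Q\equiv0$, to a sharp one--variable inequality along optimal transport, then to recast it as a displacement--convexity statement that sharpens the free Poincar\'e inequality $P(1/2)$ for $\alpha$, and to attack that.

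\textbf{Reduction.} Let $\mu\in\mathcal{P}_{as}(\T)$ have finite energy. By part (9) of Proposition~\ref{p:W}, $\mathcal{W}_{2}(\mu,\alpha)=W_{2}(\mu,\alpha)$, so one may work with the ordinary Wasserstein distance; write $\mu=\theta_{\#}\alpha$ with $\theta(x)=x+\psi(x)$ the monotone geodesic transport map from $\alpha$ to $\mu$, so that $\psi$ is $2\pi$--periodic, $x\mapsto x+\psi(x)$ is nondecreasing, $|\psi|\le\pi$, and the mean constraint built into $\mathcal{W}_{2}$ forces $\int\psi\,d\alpha=0$. Using $\iint\log|e^{ix}-e^{iy}|\,\alpha(dx)\alpha(dy)=0$ (Corollary~\ref{c:1}) and $\log|z-w|=\log|2\sin((x-y)/2)|$ for $z=e^{ix},w=e^{iy}$, the inequality $T(1/2)$ becomes equivalent to
\begin{equation}\label{e:conjred}
\iint (a-b)^2\,\alpha(dx)\,\alpha(dy)\ \le\ -\iint \log\frac{\sin a}{\sin b}\,\alpha(dx)\,\alpha(dy),
\end{equation}
where $b=\tfrac12(x-y)$ and $a=b+\tfrac12(\psi(x)-\psi(y))=\tfrac12(\theta(x)-\theta(y))$, the left side being exactly $\tfrac12W_{2}^{2}(\mu,\alpha)$. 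The bound $T(1/4)$ of Theorem~\ref{t:4} is \eqref{e:conjred} with the constant $1$ replaced by $\tfrac12$ on the left, obtained from the pointwise estimate $\tfrac12(a-b)^{2}\le(a-b)\cot b-\log(\sin a/\sin b)$ together with $\iint(a-b)\cot b\,\alpha(dx)\alpha(dy)=0$ (principal value). Thus the entire gap is a single factor $2$, and by Proposition~\ref{p:6} it cannot be closed by any bound on the integrand depending on $b$ alone, since the optimal such weight equals $1$ at $b=\pi/2$. The monotonicity of $\theta$, i.e.\ the strong dependence between $a$ and $b$ (antipodal pairs $b\approx\pi/2$ carrying only a small fraction of $\iint(\psi(x)-\psi(y))^{2}$), must therefore enter in an essential way.

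\textbf{Displacement--convexity route.} I would recast \eqref{e:conjred} along the $W_{2}$--geodesic $\mu_{s}=(x\mapsto x+s\psi(x))_{\#}\alpha$, $s\in[0,1]$, joining $\alpha$ to $\mu$. Then $E_{0}(\mu_{0})=E_{0}(\alpha)=0$, $\frac{d}{ds}\big|_{s=0}E_{0}(\mu_{s})=0$ (the first variation of $E_{0}$ at its minimizer $\alpha$ vanishes, cf.\ Corollary~\ref{c:1}), and a direct computation gives
\[
\frac{d^{2}}{ds^{2}}E_{0}(\mu_{s})=\iint\frac{(\psi(x)-\psi(y))^{2}}{4\sin^{2}\!\big(\tfrac12(x-y)+\tfrac s2(\psi(x)-\psi(y))\big)}\,\alpha(dx)\,\alpha(dy).
\]
Integrating twice, $T(1/2)$ follows as soon as one has the displacement Hessian bound $\frac{d^{2}}{ds^{2}}E_{0}(\mu_{s})\ge W_{2}^{2}(\mu,\alpha)=\int\psi^{2}\,d\alpha$ for all $s\in[0,1]$. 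Substituting the diffeomorphism $\theta_{s}(x)=x+s\psi(x)$ (so $\mu_{s}=(\theta_{s})_{\#}\alpha$ and $\theta_{s}(x)-\theta_{s}(y)=x-y+s(\psi(x)-\psi(y))$), this bound is exactly the assertion that $\mu_{s}$ together with the transported velocity $\Psi_{s}:=\psi\circ\theta_{s}^{-1}$ (which has $\mu_{s}$--mean $0$) satisfies
\[
\iint\left|\frac{\Psi_{s}(z)-\Psi_{s}(w)}{z-w}\right|^{2}\mu_{s}(dz)\,\mu_{s}(dw)\ \ge\ \int|\Psi_{s}|^{2}\,d\mu_{s}.
\]
For $s=0$ this is precisely $P(1/2)$ for $\alpha$, i.e.\ $\mathcal{N}\ge I$ on $L^{2}_{0}(\alpha)$ (Proposition~\ref{p:2}(3), Proposition~\ref{p:1}). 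The plan is to propagate it to $s>0$: either by showing that $s\mapsto\frac{d^{2}}{ds^{2}}E_{0}(\mu_{s})$ cannot drop below its value at $s=0$ along a geodesic issued from the uniform measure (a ``curvature increases away from $\alpha$'' phenomenon), or by a Fourier / second--quantization argument in the spirit of Section~\ref{S:5} and of the argument behind Proposition~\ref{p:6}, but carried out while keeping the joint $(x,y)$ dependence rather than minimizing over $a$ at fixed $b$.

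\textbf{Main obstacle and an alternative.} The crux --- and the reason the statement is still a conjecture --- is that the interaction kernel $-\log|e^{ix}-e^{iy}|$ has convexity modulus only $1/4$ in the universal cover, attained exactly for antipodal points, so any estimate insensitive to the joint law of the transport angles stalls at $T(1/4)$; the refinements of Section~\ref{S:5} and of Proposition~\ref{p:6} gain only an $\epsilon$. A correct argument must reproduce the sharp infinitesimal constant $1$ of $P(1/2)$ and must degenerate to equality only along the rotations $\psi\equiv\mathrm{const}$, which the constraint $\int\psi\,d\alpha=0$ forbids --- consistent with the conjecture, but leaving no obvious family of near--extremizers to linearize around, so a genuinely global geometric input seems to be needed. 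An equivalent route is via the hierarchy of implications (paralleling \cite{OV}): since $LSI(\rho)$ implies $T(\rho)$, it suffices to upgrade the $Q\equiv0$ Log--Sobolev inequality \eqref{e:i3} from the constant $LSI(1/4)$, namely $E_{0}(\mu)-E_{0}(\alpha)\le\int(H\mu)^{2}\,d\mu$, to the sharp $LSI(1/2)$, $E_{0}(\mu)-E_{0}(\alpha)\le\tfrac12\int(H\mu)^{2}\,d\mu$; this meets the same $1/4$--versus--$1/2$ obstruction, but its left side is quadratic in the fluctuation $H\mu$ and may be more tractable through the operator calculus of Proposition~\ref{p:1}.
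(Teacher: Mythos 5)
This statement is a \emph{conjecture} in the paper: the author does not prove it, and only offers supporting evidence, namely that the pointwise bound \eqref{e:t1cv} with the weight $h(b)=1/\sin^{2}(b)$ would yield $T(1/2)$ via the spectral gap of $\mathcal{N}$ (i.e.\ $\langle\mathcal{N}\phi,\phi\rangle\ge\int\phi^{2}\,d\alpha$ on mean-zero functions), but that this pointwise bound is false, so the question is left open. Your proposal does not prove it either, and you say so yourself. Your reduction of $T(1/2)$ to the two-variable inequality in $a=(\theta(x)-\theta(y))/2$, $b=(x-y)/2$ is correct, and your displacement-convexity computation of $\frac{d^{2}}{ds^{2}}E_{0}(\mu_{s})$ along $\mu_{s}=(x\mapsto x+s\psi(x))_{\#}\alpha$ is also correct; the resulting sufficient condition (a Poincar\'e-type bound for $\mu_{s}$ tested against the transported velocity, which at $s=0$ is exactly the spectral gap of $\mathcal{N}$) is essentially a reformulation of the same evidence the paper records after the conjecture. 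But the decisive step --- propagating that bound from $s=0$ to all $s\in[0,1]$, or equivalently upgrading $LSI(1/4)$ to $LSI(1/2)$ --- is only announced as a plan (``the plan is to propagate it'', ``a genuinely global geometric input seems to be needed''), not carried out. That is precisely the open problem, so the proposal contains a genuine gap rather than a proof.

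One smaller inaccuracy: you assert that by Proposition~\ref{p:6} the factor $2$ ``cannot be closed by any bound on the integrand depending on $b$ alone.'' That is not established. Proposition~\ref{p:6} shows a $b$-dependent weight combined with a Fourier argument gains an $\epsilon$, and the paper explicitly leaves open the possibility that the Dirichlet form built from the optimal weight $h$ of \eqref{e:t3} has the \emph{same} spectral gap as $\mathcal{N}$, which would close the gap using only $b$-dependent information; what fails is merely the crude comparison through the pointwise minimum of $h$ (and the specific choice $h(b)=1/\sin^{2}(b)$ in \eqref{e:t1cv}). So your structural claim about why the conjecture is hard is stronger than what is known, even though your overall assessment --- that the sharp constant requires keeping the joint dependence of $a$ and $b$ or some new global input --- is a reasonable reading of the situation.
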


Here is some supporting evidence for this claim.   If we take the function $h(y)=1/\sin^2(y)$  then we have the following sequence of equalities
\[
\frac{1}{8}\int (\psi(x)-\psi(y))^2 h(|x-y|)\,dx\,dy=\frac{1}{8}\int\frac{(\psi(x)-\psi(y))^2}{\sin^2((x-y)/2)}\,dx\,dy=\frac{1}{2}\int\frac{(\phi(z)-\phi(w))^2}{|z-w|^2}\alpha(dz)\alpha(dw)
\]
where $\phi(e^{ix})=\psi(x)$.  The last term is precisely $\langle\mathcal{N}\phi,\phi\rangle$ and because $\int \phi \,d\alpha=0$, we know from the fact that the spectrum of $\mathcal{N}$ is $0,\pm1,\pm2,\dots,$ that $\langle\mathcal{N}\phi,\phi\rangle\ge \int \phi^2\,d\alpha$ and thus 
\[
\frac{1}{2}\int \psi^2(x)\,dx \le \frac{1}{8}\int(\psi(x)-\psi(y))^2 h(|x-y|)\,dx\,dy.
\]

Thus, if \eqref{e:t1cv} were true with $h(b)=1/\sin^2(b)$, then we would obtain that 
\[
\frac{1}{2}\int (\theta(x)-x)^2(x)dx\le -\iint\log\left(\frac{\sin((\theta(x)-\theta(y))/2)}{\sin((x-y)/2)}\right)\,dx\, dy
\]
which is in fact $T(1/2)$, thus getting the claim of the Conjecture.      

Unfortunately, \eqref{e:t1cv} is not satisfied with $h(b)=1/\sin^2(b)$, thus the argument outlined above does not work.  However, it might be that the operator defined by the Dirichlet form of the left hand side of \eqref{e:t5} with the function $h$ from  \eqref{e:t3} has the same spectral gap as the operator $\mathcal{N}$, though, I do not know how to show that.

\subsection{Log-Sobolev Inequality}

This section deals with the following form of Log-Sobolev inequality which was introduced in \cite{HPU2}.  

\begin{definition}
We say that the $C^{1}$ potential $Q$ satisfies the free Log-Sobolev if there exists a positive $\rho>0$ such that  for any other smooth measure $\mu$ on $\T$
\begin{equation}\label{e4:1}
E_{Q}(\mu)-E_{Q}(\mu_{Q})\le \frac{1}{4\rho}I_{Q}(\mu)
\end{equation}
with 
\[
I_{Q}(\mu):=\int(H\mu-Q')^{2}d\mu-\left(\int Q'd\mu\right)^{2}\text{ and } H\mu(x)=-p.v. \int \frac{z+w}{z-w}\mu(dw)=p.v.\int \cot\left(\frac{x-y}{2}\right)\bar{\mu}_{u}(dy)
\]
taken in the sense of principal value and for any $u\in[0,2\pi)$.  

We will refer in the sequel to  \eqref{e4:1} as $LSI(\rho)$. 
\end{definition}

On the real line, the Fisher information as introduced by Voiculescu is given by $I_{Q}(\mu):=\int(H\mu-Q')^{2}d\mu$ which is clearly non-negative.  Here, on the circle we subtract the factor $\left(\int Q'd\mu\right)^{2}$ and thus it is not clear if $I_{Q}$ remains non-negative.   To see that this is indeed so, for a smooth measure $\mu$, one can easily see that $\int (H\mu) d\mu=0$ and from this we can rewrite  
\[
I_{Q}(\mu)=\int\left((H\mu-Q') - \int (H\mu-Q')d\mu\right)^{2}d\mu
\]
which is obviously non-negative.  If $\mu_{Q}$ has full support, then equality is attained for  $\mu=\mu_{Q}$ as it is observed in \cite{HPU2} and one can also see from the variational characterization \eqref{ep:var} or for instance from \eqref{e1:03}.  

The main result of this section is essentially due to Hiai Petz and Ueda from  \cite{HPU2}, though they use  random matrices in their approach.  Our proof is different. 

\begin{theorem}\label{t:lsi}
Any $C^{3}$ potential $Q:\mathbf{T}\to\R$ such that $Q''\ge \rho-1/2$ for some $\rho>0$ satisfies $LSI(\rho/2)$.   

\end{theorem}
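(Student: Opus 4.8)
The plan is to run the same transport strategy as in the proof of Theorem~\ref{t:4}, but with the convexity and concavity estimates anchored at the ``other endpoint''; this converts the transportation argument into an HWI-type estimate, from which $LSI(\rho/2)$ follows by an elementary optimization.

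First I would fix a smooth probability measure $\mu$ with $E_Q(\mu)<\infty$. Such a $\mu$ is atomless, so by part (6) of Proposition~\ref{p:W} I may choose $t,s\in[0,2\pi]$ with $\int x\,\bar\mu_{Q,t}(dx)=\int x\,\bar\mu_s(dx)$ and a non-decreasing transport map $\theta:[t,t+2\pi)\to[s,s+2\pi)$ with $\theta_{\#}\bar\mu_{Q,t}=\bar\mu_s$ and $\mathcal W_2^2(\mu,\mu_Q)=\int(\theta(x)-x)^2\,\bar\mu_{Q,t}(dx)$; here I use Corollary~\ref{c:2} to know that $\bar\mu_{Q,t}$ has full support and a density bounded below, so that $\theta$ is a strictly increasing bijection with inverse $\theta^{-1}$. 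Writing $\psi(x)=\theta(x)-x$ on the $\mu_Q$-side and $\phi(y)=y-\theta^{-1}(y)$ on the $\mu$-side, the mean-matching gives $\int\psi\,d\bar\mu_{Q,t}=\int\phi\,d\bar\mu_s=0$, $\phi\circ\theta=\psi$, and $\int\psi^2\,d\bar\mu_{Q,t}=\int\phi^2\,d\bar\mu_s=\mathcal W_2^2(\mu,\mu_Q)$. With $V(x)=Q(e^{ix})$, and absorbing the constant $\log 2$ in the kernel $\log|e^{ix}-e^{iy}|=\log 2+\log|\sin((x-y)/2)|$, one has
\[
E_Q(\mu)-E_Q(\mu_Q)=\int\bigl(V(\theta(x))-V(x)\bigr)\bar\mu_{Q,t}(dx)-\iint\log\Bigl|\tfrac{\sin((\theta(x)-\theta(y))/2)}{\sin((x-y)/2)}\Bigr|\,\bar\mu_{Q,t}(dx)\,\bar\mu_{Q,t}(dy).
\]
I would bound the first term using $V''\ge\rho-1/2$ in the form $V(\theta(x))-V(x)\le V'(\theta(x))(\theta(x)-x)-\frac{\rho-1/2}{2}(\theta(x)-x)^2$, and the integrand of the second using the concavity of $u\mapsto\tfrac12u^2+\log|\sin u|$ on each component of $(-\pi,0)\cup(0,\pi)$ \emph{at the point} $a=(\theta(x)-\theta(y))/2$ rather than at $b=(x-y)/2$, i.e.
\[
-\log\Bigl|\tfrac{\sin a}{\sin b}\Bigr|\le-\tfrac12(a-b)^2-(a-b)\cot a,
\]
which is legitimate because the monotonicity of $\theta$ forces $a$ and $b$ into the same component.

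Next I would push everything forward by $\theta$ onto $\mu$: the term $V'(\theta(x))\psi(x)$ becomes $V'(y)\phi(y)$; the quadratic pieces combine as $-\tfrac14\mathcal W_2^2-\tfrac{\rho-1/2}{2}\mathcal W_2^2=-\tfrac{\rho}{2}\mathcal W_2^2$; and the $\cot$ double integral, after antisymmetrizing the kernel $\cot((x-y)/2)$ (using $\int(H\mu)\,d\mu=0$), becomes $-\int\phi\,(H\mu)\,d\mu$. This yields the fundamental inequality
\[
E_Q(\mu)-E_Q(\mu_Q)\le\int\phi\,(Q'-H\mu)\,d\mu-\tfrac{\rho}{2}\,\mathcal W_2^2(\mu,\mu_Q).
\]
Since $\int\phi\,d\mu=0$ I may subtract the constant $\int Q'\,d\mu$ inside the linear term, and then Cauchy--Schwarz, together with $\int\phi^2\,d\mu=\mathcal W_2^2(\mu,\mu_Q)$ and the identity $\int\bigl(Q'-H\mu-\int Q'\,d\mu\bigr)^2 d\mu=\int(H\mu-Q')^2 d\mu-(\int Q'\,d\mu)^2=I_Q(\mu)$ (using $\int(H\mu)\,d\mu=0$ once more), gives the HWI-type bound $E_Q(\mu)-E_Q(\mu_Q)\le\sqrt{I_Q(\mu)}\,\mathcal W_2(\mu,\mu_Q)-\tfrac{\rho}{2}\mathcal W_2^2(\mu,\mu_Q)$. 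Optimizing the right-hand side over the value of $\mathcal W_2(\mu,\mu_Q)\ge 0$, i.e. using $\sqrt I\,w-\tfrac\rho2 w^2\le\tfrac{I}{2\rho}$, produces $E_Q(\mu)-E_Q(\mu_Q)\le\tfrac{1}{2\rho}I_Q(\mu)=\tfrac{1}{4(\rho/2)}I_Q(\mu)$, which is $LSI(\rho/2)$.

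The main obstacle I expect is the rigorous handling of the singular integrals: justifying the antisymmetrization of the $\cot((x-y)/2)$ double integral against $\bar\mu_s\otimes\bar\mu_s$ and the equality $\int(H\mu)\,d\mu=0$ as principal values, and ensuring $H\mu\in L^2(\mu)$ so that the Cauchy--Schwarz step and the quantity $I_Q(\mu)$ are meaningful — this is exactly where the smoothness of $\mu$ and the lower bound on the density of $\mu_Q$ from Corollary~\ref{c:2} enter. The other delicate point is the sign bookkeeping in the two one-dimensional estimates (tangent at $a$ for the logarithmic term, tangent at $\theta(x)$ for $V$), but this is routine once the direction of each inequality is fixed.
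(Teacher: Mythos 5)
Your argument is correct, and every estimate in it is sound: the Taylor bound for $V$ anchored at $\theta(x)$, the concavity inequality for $u\mapsto \tfrac12u^2+\log|\sin u|$ anchored at $a=(\theta(x)-\theta(y))/2$ (which after symmetrization and pushforward produces exactly $-\tfrac14\mathcal{W}_2^2-\int\phi\,H\mu\,d\mu$), the identification $\int\bigl(Q'-H\mu-\int Q'\,d\mu\bigr)^2d\mu=I_Q(\mu)$ via $\int H\mu\,d\mu=0$, and the final optimization $\sqrt{I}\,w-\tfrac{\rho}{2}w^2\le \tfrac{I}{2\rho}$ all check out, so you do obtain $LSI(\rho/2)$. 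The route differs from the paper's proof of Theorem~\ref{t:lsi} mainly in its final bookkeeping: you first establish the HWI-type bound $E_Q(\mu)-E_Q(\mu_Q)\le\sqrt{I_Q(\mu)}\,\mathcal{W}_2(\mu,\mu_Q)-\tfrac{\rho}{2}\mathcal{W}_2^2(\mu,\mu_Q)$ and then optimize over the value of $\mathcal{W}_2$, which is precisely the combination of the paper's later HWI theorem with the hierarchy implication ``$HWI(\rho)$ implies $LSI(\rho)$'' of Section~\ref{S:9}; the paper's direct proof instead completes a square pointwise, writing $V(x)-V(T(x))+\tfrac{1}{2\rho}(H\mu(T(x))-V'(T(x)))^2$ as $-\tfrac14(T(x)-x)^2+(x-T(x))H\mu(T(x))+\tfrac{1}{2\rho}[\rho(x-T(x))+V'(T(x))-H\mu(T(x))]^2$ and then uses Jensen/Cauchy--Schwarz on the last term to absorb the correction $(\int Q'\,d\mu)^2$, thereby never introducing the intermediate Wasserstein quantity. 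The underlying transport machinery and the one-dimensional concavity inequality (tangent taken on the $\mu$-side, i.e.\ \eqref{1} with $b=(T(x)-T(y))/2$) are the same in both arguments; your version has the small advantage of delivering HWI as a byproduct, while the paper's completion of squares is more economical and, as remarked after \eqref{e9:tim} in a related context, keeps a nonnegative discarded term in sight. The singular-integral and regularity issues you flag are treated at the same (informal) level in the paper, so they do not constitute a gap relative to it.
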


\begin{proof}
We proceed with the same arguments and notations as for the transportation inequality.  Thus the map $\theta$ transports $\bar{\mu}_{Q,t}$ into $\bar{\mu}_{t}$ and with this we write the inequality in the equivalent form as
\begin{align*}
\iint\log\left(\frac{\sin((x-y)/2)}{\sin((\theta(x)-\theta(y))/2)}\right)&\bar{\mu}_{Q,t}(dx)\bar{\mu}_{Q,t}(dy) \le \int (V(x)-V(\theta(x)))\bar{\mu}_{Q,t}(dx) \\ &+\frac{1}{2\rho}\int(H\mu(\theta(x))-V'(\theta(x)))^{2}\bar{\mu}_{Q,t}(dx)-\frac{1}{2\rho}\left(\int V'(\theta(x))\bar{\mu}_{Q,t}(dx) \right)^{2}.
\end{align*}

As a word of caution, we may assume that the map $\theta$ is a smooth map, otherwise we can rely on careful approximations arguments outlined in \cite{HPU1} or \cite{HPU2}.  

Now, because $V(x)-(\rho/2-1/4) x^{2}$ is convex,
\[
V(x)-V(\theta(x))\ge (\rho-1/2)(x-\theta(x))^{2}/2+(x-\theta(x))V'(\theta(x))
\]
and then 
\begin{align*}
V(x)&-V(\theta(x))+\frac{1}{2\rho}(H\mu(\theta(x))-V'(\theta(x)))^{2}\ge \\ 
&\quad \frac{2\rho-1}{4}(\theta(x)-x)^{2}+(x-\theta(x))V'(\theta(x)) +\frac{1}{2\rho}(H\mu(\theta(x))-V'(\theta(x)))^{2}\\ 
& =- (1/4)(\theta(x)-x)^{2}+ (x-\theta(x))H\mu(\theta(x))+ \frac{1}{2\rho}\left[\rho(x-\theta(x)) + V'(\theta(x))-H\mu(\theta(x))  \right]^{2}
\end{align*}
Integrating with respect to $\bar{\mu}_{Q,t}$, using the Cauchy-Schwarz inequality and the fact that 
\[
\int(x-\theta(x))\bar{\mu}_{Q,t}(dx)=\int H\mu(\theta(x))\bar{\mu}_{Q,t}(dx)=0,
\]
 results with
\[
\int \left[\rho(x-\theta(x)) + V'(\theta(x))-H\mu(\theta(x))  \right]^{2}\mu_{Q}(dx)\ge \left(\int V'(\theta(x))\mu_{Q}(dx)\right)^{2}.
\]
The required inequality follows thus from
\begin{equation}\label{2}
\iint\log\left(\frac{\sin((x-y)/2)}{\sin((\theta(x)-\theta(y))/2)}\right)\bar{\mu}_{Q,t}(dx)\bar{\mu}_{Q,t}(dy) \le -(1/4)\int (\theta(x)-x)^{2}\bar{\mu}_{Q,t}(dx)+\int (x-\theta(x))H\mu(\theta(x))\bar{\mu}_{Q,t}(dx).
\end{equation}
Furthermore, since
\begin{align*}
\int (\theta(x)-x)^{2}\bar{\mu}_{Q,t}(dx) & = \frac{1}{2}\iint ((\theta(x)-x)-(\theta(y)-y))^{2}\bar{\mu}_{Q,t}(dx)\bar{\mu}_{Q,t}(dy) \\
\int (x-\theta(x))H\mu(\theta(x))\bar{\mu}_{Q,t}(dx)& = \frac{1}{2}\iint ((x-\theta(x))-(y-\theta(y)))\cot((\theta(x)-\theta(y))/2)\bar{\mu}_{Q,t}(dx)\bar{\mu}_{Q,t}(dy) 
\end{align*}
the inequality \eqref{2} becomes a consequence of \eqref{1} with $a=(x-y)/2$ and $b=(\theta(x)-\theta(y))/2$. 
\end{proof}

As in the case of the transportation, we do not think $LSI(\rho/2)$ is sharp.   In particular, for $Q\equiv 0$, we can prove as in the case of the transportation the following statement.  

\begin{proposition}
For the case $Q\equiv 0$, there exists $\delta>0$ such that $LSI((1+\delta)/4)$ is satisfied.     
\end{proposition}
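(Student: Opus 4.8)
The idea is to exploit the fact that the proof of Theorem~\ref{t:lsi} actually proves an HWI-type inequality, and then to feed into it the \emph{improved} transportation inequality already obtained in Proposition~\ref{p:6}. This is exactly the free analogue of the classical mechanism "transportation $+$ Log-Sobolev $\Longrightarrow$ strengthened Log-Sobolev via HWI" from \cite{OV}. Throughout fix $Q\equiv 0$, so $\mu_Q=\alpha$, $V=V'\equiv 0$, and by the remark after Theorem~\ref{t:p} we are in the case $\rho=1/2$; we may also assume $\mu$ smooth, hence atomless and of finite logarithmic energy with $I_0(\mu)<\infty$ (otherwise $LSI$ is trivial).

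First I would record what the proof of Theorem~\ref{t:lsi} establishes for $Q\equiv 0$. Running that proof up to \eqref{2} (which is proved there from the concavity inequality \eqref{1}) and specializing to $Q\equiv 0$, where $\bar\mu_{Q,t}=\bar\alpha_t$ and $T$ is the non-decreasing map carrying $\bar\alpha_t$ onto $\bar\mu_t$, one gets
\[
E_0(\mu)-E_0(\alpha)\ \le\ -\tfrac14\int (T(x)-x)^2\,\bar\alpha_t(dx)+\int (x-T(x))\,H\mu(T(x))\,\bar\alpha_t(dx).
\]
Since $\int (T(x)-x)^2\bar\alpha_t(dx)=\mathcal W_2(\mu,\alpha)^2$ (here $\mathcal W_2(\mu,\alpha)=W_2(\mu,\alpha)$ by Proposition~\ref{p:W}, items (6) and (9)) and, by push-forward through $T$, $\int H\mu(T(x))^2\bar\alpha_t(dx)=\int (H\mu)^2\,d\mu=I_0(\mu)$, an application of Cauchy--Schwarz to the last integral yields the HWI-type bound
\[
E_0(\mu)-E_0(\alpha)\ \le\ \sqrt{I_0(\mu)}\;\mathcal W_2(\mu,\alpha)-\tfrac14\,\mathcal W_2(\mu,\alpha)^2 .
\]

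Now I would bring in the two available improvements. By Proposition~\ref{p:6} there is $\varepsilon>0$ with $T((1+\varepsilon)/4)$, i.e. $\tfrac{1+\varepsilon}{4}\,\mathcal W_2(\mu,\alpha)^2\le E_0(\mu)-E_0(\alpha)$, while Theorem~\ref{t:lsi} at $\rho=1/2$ gives $E_0(\mu)-E_0(\alpha)\le I_0(\mu)$. Chaining these two gives $\mathcal W_2(\mu,\alpha)^2\le \tfrac{4}{1+\varepsilon}\,I_0(\mu)$, which is \emph{strictly} below the saturation value $4I_0(\mu)$ at which the bound above would only return $LSI(1/4)$. Writing $B=I_0(\mu)$ and $A=\mathcal W_2(\mu,\alpha)^2\in[0,\tfrac{4}{1+\varepsilon}B]$, the function $A\mapsto \sqrt{AB}-\tfrac14 A$ is increasing on $[0,4B]$, so on our interval it is maximised at $A=\tfrac{4}{1+\varepsilon}B$, where it equals $\big(1-(1-\tfrac{1}{\sqrt{1+\varepsilon}})^2\big)B$. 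Hence
\[
E_0(\mu)-E_0(\alpha)\ \le\ (1-\delta')\,I_0(\mu),\qquad \delta':=\Big(1-\tfrac{1}{\sqrt{1+\varepsilon}}\Big)^2>0,
\]
which is precisely $LSI\!\big(\tfrac{1}{4(1-\delta')}\big)$; setting $\delta:=\tfrac{\delta'}{1-\delta'}>0$ so that $\tfrac{1}{4(1-\delta')}=\tfrac{1+\delta}{4}$ finishes the argument.

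The only points requiring care are routine: that a smooth (hence finite-energy, atomless) $\mu$ makes \eqref{2}, the monotone transport $T$, and the identifications in Proposition~\ref{p:W} applicable, and the trivial cases $I_0(\mu)=+\infty$ and $\mu=\alpha$. In particular \emph{no new spectral or Fourier analysis is needed here}: the genuine analytic obstacle has already been dispatched in Proposition~\ref{p:6}, and what makes the proof work is precisely that its improved constant $(1+\varepsilon)/4$ pushes $\mathcal W_2(\mu,\alpha)^2$ a definite amount below $4I_0(\mu)$. (An alternative, more self-contained but heavier route would mimic Proposition~\ref{p:6} directly by substituting the sharpened inequality \eqref{e:t1cv} for \eqref{1} in the proof of Theorem~\ref{t:lsi}; the difficulty there is that the weight $g$ then appears evaluated at $(T(x)-T(y))/2$ rather than at $(x-y)/2$, so the Fourier diagonalisation of Proposition~\ref{p:6} no longer applies verbatim and one would have to run a compactness argument on bounded-energy measures — which is why I would take the HWI route instead.)
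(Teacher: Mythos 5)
Your argument is correct, but it takes a genuinely different route from the paper's. The paper's own proof is omitted with the remark that it is ``very similar to the one used in the transportation'', i.e.\ it re-runs the mechanism of Proposition~\ref{p:6} inside the proof of Theorem~\ref{t:lsi}: replace \eqref{1} by a weighted pointwise refinement and then improve the resulting quadratic form by the Fourier/spectral-gap argument. You instead treat Proposition~\ref{p:6} as a black box and combine three facts already established in the paper --- the estimate \eqref{2} plus Cauchy--Schwarz (which for $Q\equiv 0$ is exactly $HWI(1/4)$, so you could equally have cited the HWI theorem directly), the improved transportation $T((1+\epsilon)/4)$, and $LSI(1/4)$ --- through the elementary monotonicity of $A\mapsto\sqrt{AB}-A/4$ on $[0,4B]$; the arithmetic giving $\delta'=\bigl(1-(1+\epsilon)^{-1/2}\bigr)^2$ and $\delta=\delta'/(1-\delta')$ checks out, and invoking the already-proved $LSI(1/4)$ at this stage is not circular (one could even dispense with it, since $T((1+\epsilon)/4)$ combined with the HWI bound alone forces $\mathcal{W}_2^2(\mu,\alpha)\le \tfrac{16}{(2+\epsilon)^2}I_0(\mu)$). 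What your route buys is a soft proof, with no new Fourier analysis and an explicit $\delta$ in terms of $\epsilon$, very much in the spirit of the Otto--Villani implications recorded in Section~\ref{S:9}; what the paper's direct route buys is an improvement at the level of the Dirichlet form itself, which in principle yields a better constant. One remark on your closing aside: the obstacle you point out for the direct route (the weight $h$ of \eqref{e:t3} being evaluated at $(T(x)-T(y))/2$) is real for that particular choice of weight, but it can be sidestepped without any compactness argument, because the integral-remainder representation used in Proposition~\ref{p:6} equally yields a refinement of \eqref{1} with a weight depending on the variable $a=(x-y)/2$, to which the Fourier diagonalisation applies verbatim since the base measure in the Log-Sobolev proof is again the Haar measure.
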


A possible proof runs similarly to the one for the transportation inequality.  However a more elegant approach is simply a consequence of the HWI inequality below, particularly it follows from the second part of Corollary~\ref{c:3} since the transportation constant in the case $Q\equiv0$ is strictly greater than $1/4$.  

As in the transportation inequality case, we believe that for the case of $Q\equiv 0$, the constant in Theorem~\ref{t:lsi} is $1/2$ not $1/4$.   

\begin{conjecture}
For the case $Q\equiv0$, $LSI(1/2)$ is true.   
\end{conjecture}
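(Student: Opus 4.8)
The plan is to run the free analogue of the Bakry--\'Emery/Otto--Villani interpolation scheme, exploiting the simplifications available when $Q\equiv 0$: then $\mu_{Q}=\alpha$ by Corollary~\ref{c:1}, so $E_{Q}(\mu_{Q})=0$, and since $Q'\equiv 0$ the correction term in $I_{Q}$ disappears, leaving $I_{0}(\mu)=\int (H\mu)^{2}\,d\mu$. It is convenient to recall that $H\mu=U_{\mu}'$, where $U_{\mu}(z)=2\int\log|z-w|\,\mu(dw)$ is the logarithmic potential of $\mu$ (this is exactly the computation of $V'$ used in the proof of Theorem~\ref{t:4}). Thus $LSI(1/2)$ is precisely the inequality
\[
E_{0}(\mu)=-\iint\log|z-w|\,\mu(dz)\mu(dw)\ \le\ \tfrac12\int (H\mu)^{2}\,d\mu ,
\]
and one may assume $\mu$ has a smooth density bounded away from $0$, since otherwise $I_{0}(\mu)=+\infty$ and there is nothing to prove.

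I would interpolate along the free heat flow of $E_{0}$, namely the Wasserstein gradient flow $\partial_{t}\mu_{t}=-\partial_{x}(\mu_{t}\,H\mu_{t})$ with $\mu_{0}=\mu$, a complex--Burgers type equation that is conveniently encoded by a first order PDE for the Herglotz transform $F_{t}(z)=\int\frac{z+w}{z-w}\,\mu_{t}(dw)$. The first ingredient is the standard regularity package: short--time existence, instantaneous smoothing so that $\mu_{t}$ has a smooth positive density for $t>0$, global existence, and $\mu_{t}\to\alpha$ as $t\to\infty$ --- the convergence to $\alpha$ because, by Corollary~\ref{c:1}, it is the unique stationary point. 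Granting this, the continuity equation together with $H\mu_{t}=U_{\mu_{t}}'$ gives the entropy--dissipation identity
\[
\frac{d}{dt}E_{0}(\mu_{t})=-\int (U_{\mu_{t}}')^{2}\,d\mu_{t}=-I_{0}(\mu_{t}),
\]
so that $E_{0}(\mu)-E_{0}(\alpha)=\int_{0}^{\infty}I_{0}(\mu_{t})\,dt$, and recall $E_{0}(\alpha)=0$.

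The crucial step, which I expect to be the genuine obstacle, is the second order estimate --- the free Bochner (or $\Gamma_{2}$) inequality
\[
\frac{d}{dt}I_{0}(\mu_{t})\ \le\ -2\,I_{0}(\mu_{t}),\qquad\text{equivalently}\qquad \frac{d^{2}}{dt^{2}}E_{0}(\mu_{t})\ \ge\ 2\,I_{0}(\mu_{t}).
\]
The approach would be to differentiate $I_{0}(\mu_{t})=\int (H\mu_{t})^{2}\,d\mu_{t}$ along the PDE and reorganise the result as $-2I_{0}(\mu_{t})$ minus a sum of manifestly non--negative square terms, the constant $2$ being the free curvature of $\alpha$ for the trivial potential. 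Heuristically this is the non--linear upgrade of a sharp linear fact: linearising the flow around $\alpha$ diagonalises it on the modes $z^{n}$, with $z^{n}$ decaying like $e^{-nt}$, and a short computation then gives $\tfrac{d}{dt}I_{0}+2I_{0}\le 0$ at this order, with equality carried by the rotation mode $n=1$ --- the same role the smallest nonzero eigenvalue of $\mathcal{N}$, which equals $1$, plays in $P(1/2)$ and its Houdr\'e--Kagan refinements (Sections~\ref{S:4} and~\ref{S:5}). The difficulty is that this Bochner identity on $\T$ has not been established; only its linearisation is. Two plausible remedies: (a) find the exact algebraic identity for $\tfrac{d}{dt}I_{0}(\mu_{t})$ --- presumably built from a local term $\int((H\mu_{t})')^{2}\,d\mu_{t}$, a non--local term, and a cubic term that one controls using the positivity of $d\mu_{t}/d\alpha$; or (b) get the estimate from a regularisation in which a Bochner identity holds and pass to the limit --- but, as explained in Section~\ref{S:7}, the $U(n)$ ensembles only return the non--sharp constant, so the scheme would have to be adapted to $SU(n)$, or to the extra rotational degree of freedom that $\mathcal{W}_{2}$ isolates.

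Once the $\Gamma_{2}$ bound is available, Gronwall gives $I_{0}(\mu_{t})\le e^{-2t}I_{0}(\mu)$ and hence
\[
E_{0}(\mu)=\int_{0}^{\infty}I_{0}(\mu_{t})\,dt\ \le\ \Big(\int_{0}^{\infty}e^{-2t}\,dt\Big)I_{0}(\mu)=\tfrac12 I_{0}(\mu),
\]
which is $LSI(1/2)$; via the implications discussed later in the paper it would in turn yield the conjectured $T(1/2)$. Two alternative routes of comparable difficulty are worth recording: first establish the conjectured sharp HWI inequality and optimise it over $\mathcal{W}_{2}(\mu,\alpha)$; or revisit the reduction in the proof of Theorem~\ref{t:lsi}, replacing the pointwise bound \eqref{1} by the sharp weighted version with the weight $h$ of \eqref{e:t3} and then showing that the integral operator whose Dirichlet form is the resulting left side has spectral gap at least that of $\mathcal{N}$ on $L^{2}_{0}(\alpha)$ --- exactly the difficulty already flagged after the conjecture $T(1/2)$. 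All three approaches come down to the same missing ingredient, the sharp curvature of $\alpha$, which is why the statement remains conjectural.
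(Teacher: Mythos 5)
The statement you are addressing is stated in the paper as a conjecture, with no proof offered: the author only proves $LSI(1/4)$ (Theorem~\ref{t:lsi}) and a non-sharp improvement $LSI((1+\delta)/4)$, and explicitly says the sharp constant is believed but not known. Your text is likewise not a proof but a program, and you say so yourself; the decisive step is missing. Concretely, everything up to the dissipation identity is fine and standard in spirit: for $Q\equiv 0$ one has $\mu_{Q}=\alpha$, $I_{0}(\mu)=\int(H\mu)^{2}d\mu$, $H\mu=U_{\mu}'$, and along the flow $\partial_{t}\mu_{t}=-\partial_{x}(\mu_{t}H\mu_{t})$ one gets $\frac{d}{dt}E_{0}(\mu_{t})=-I_{0}(\mu_{t})$. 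But the inequality that carries the whole argument, the free Bochner/$\Gamma_{2}$ bound $\frac{d}{dt}I_{0}(\mu_{t})\le-2I_{0}(\mu_{t})$, is nowhere established: you only verify it for the linearization of the flow at $\alpha$ (decay of the Fourier modes $z^{n}$ at rate $|n|$), and the passage from this linear spectral fact to the nonlinear estimate is exactly the ``sharp curvature of $\alpha$'' that is absent from the paper and from your note. Differentiating $\int(H\mu_{t})^{2}d\mu_{t}$ along the nonlocal transport equation produces cubic, nonlocal terms whose sign you do not control, and you offer no identity or mechanism (beyond the hope in your items (a) and (b)) showing they organize into $-2I_{0}$ plus nonnegative squares; note that the random-matrix regularization you mention is known, per Section~\ref{S:7}, to lose precisely the factor you need, and your alternative routes (sharp HWI, or a spectral-gap statement for the Dirichlet form built from the weight $h$ of \eqref{e:t3} replacing \eqref{1}) are restatements of the same open problem, not reductions of it.

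A secondary but real gap is the ``regularity package'' you grant yourself: global well-posedness, instantaneous smoothing, positivity of the density, convergence $\mu_{t}\to\alpha$ with $E_{0}(\mu_{t})\to 0$ (needed to write $E_{0}(\mu)=\int_{0}^{\infty}I_{0}(\mu_{t})\,dt$), and the justification of the two differentiations under the integral for a general probability measure with $I_{0}(\mu)<\infty$. On the circle, with the drift given by a principal-value Hilbert transform, none of this is immediate, and the paper provides no tools for it. So the proposal is a reasonable heuristic outline consistent with why the constant $1/2$ is expected (the spectral gap $1$ of $\mathcal{N}$, as in $P(1/2)$), but it does not prove the conjecture, and indeed concedes as much; the key lemma it relies on is precisely what would have to be discovered.
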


\subsection{HWI inequality}

\begin{definition}

We say that the potential $Q$ satisfies the free $HWI(\rho)$ for $\rho\in\R$ if for any smooth measure $\mu$ on $\T$
\begin{equation}\label{e5:1}
E_{Q}(\mu)-E_{Q}(\mu_{Q})\le \sqrt{I_{Q}(\mu)}\mathcal{W}_{2}(\mu,\mu_{Q})-\rho \mathcal{W}_{2}(\mu,\mu_{Q})^{2}.
\end{equation}
\end{definition}

With the same notations as above we have the following result.  

\begin{theorem}  Any $C^{3}$ potential $Q:\mathbf{T}\to\R$  such that $Q''\ge \rho-1/2$ for some  $\rho\in\R$ satisfies the free $HWI(\rho/2)$.
\end{theorem}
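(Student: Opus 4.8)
The plan is to rerun the transport argument from the proofs of Theorems~\ref{t:4} and~\ref{t:lsi}, but carrying a free parameter $\epsilon>0$ through the computation and optimizing it at the very end; the $\mathcal{W}_{2}$ term will then be produced by an arithmetic--geometric mean step, which is the same mechanism (read backwards) by which classical HWI implies classical LSI. One may assume that $I_{Q}(\mu)$ and $\mathcal{W}_{2}(\mu,\mu_{Q})$ are both finite and positive, the remaining cases being immediate. Then $\mu$ is atomless, so Proposition~\ref{p:W}(6) supplies $t\in[0,2\pi]$ and a nondecreasing $\theta=\theta_{t}\colon[t,t+2\pi)\to[t,t+2\pi)$ with $\theta_{\#}\bar{\mu}_{Q,t}=\bar{\mu}_{t}$, with equal means $\int x\,\bar{\mu}_{Q,t}(dx)=\int x\,\bar{\mu}_{t}(dx)$, and with $\mathcal{W}_{2}^{2}(\mu,\mu_{Q})=\int(x-\theta(x))^{2}\bar{\mu}_{Q,t}(dx)=:W^{2}$. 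Writing $V(x)=Q(e^{ix})$ and $u=u(x)=x-\theta(x)$, one has, exactly as in the proof of Theorem~\ref{t:lsi},
\[
E_{Q}(\mu)-E_{Q}(\mu_{Q})=\int\big(V(\theta(x))-V(x)\big)\bar{\mu}_{Q,t}(dx)-\iint\log\frac{|\sin((\theta(x)-\theta(y))/2)|}{|\sin((x-y)/2)|}\,\bar{\mu}_{Q,t}(dx)\,\bar{\mu}_{Q,t}(dy).
\]

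For the potential term I would use $V''\ge\rho-\tfrac{1}{2}$ in the tangent-line form $V(x)-V(\theta(x))\ge V'(\theta(x))\,u+\tfrac{\rho-1/2}{2}\,u^{2}$, add $\epsilon\,\big(H\mu(\theta(x))-V'(\theta(x))\big)^{2}$ to both sides, and complete the square in $V'(\theta(x))$ --- this is the identity underlying Theorem~\ref{t:lsi} with $\tfrac{1}{2\rho}$ replaced by the free $\epsilon$. Integrating against $\bar{\mu}_{Q,t}$, using $\int u\,\bar{\mu}_{Q,t}=0$ (the mean condition encoded in $\mathcal{W}_{2}$) and $\int H\mu(\theta(x))\,\bar{\mu}_{Q,t}(dx)=\int H\mu\,d\mu=0$, bounding the completed square from below by $\big(\int V'(\theta(x))\,\bar{\mu}_{Q,t}(dx)\big)^{2}$ by Jensen's inequality, and recognizing that $\int\big(H\mu(\theta(x))-V'(\theta(x))\big)^{2}\bar{\mu}_{Q,t}(dx)-\big(\int V'(\theta(x))\,\bar{\mu}_{Q,t}(dx)\big)^{2}=I_{Q}(\mu)$, one obtains
\[
\int\big(V(\theta(x))-V(x)\big)\bar{\mu}_{Q,t}(dx)\le \epsilon\,I_{Q}(\mu)-\int u\,H\mu(\theta(x))\,\bar{\mu}_{Q,t}(dx)-\Big(\tfrac{\rho-1/2}{2}-\tfrac{1}{4\epsilon}\Big)W^{2}.
\]

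For the interaction term I would invoke the concavity inequality \eqref{1} with $a=(x-y)/2$ and $b=(\theta(x)-\theta(y))/2$, so that $a-b=(u(x)-u(y))/2$ and $a,b\in[0,\pi)$ because $\theta$ is nondecreasing on $[t,t+2\pi)$; symmetrizing and using $\int\cot\big(\tfrac{\theta(x)-\theta(y)}{2}\big)\,\bar{\mu}_{Q,t}(dy)=H\mu(\theta(x))$ (a principal value, coming from $\bar{\mu}_{t}=\theta_{\#}\bar{\mu}_{Q,t}$), together with $\iint(u(x)-u(y))^{2}\bar{\mu}_{Q,t}(dx)\bar{\mu}_{Q,t}(dy)=2W^{2}$ (again the mean-zero property), yields
\[
-\iint\log\frac{|\sin((\theta(x)-\theta(y))/2)|}{|\sin((x-y)/2)|}\,\bar{\mu}_{Q,t}(dx)\,\bar{\mu}_{Q,t}(dy)\le \int u\,H\mu(\theta(x))\,\bar{\mu}_{Q,t}(dx)-\tfrac{1}{4}W^{2}.
\]
Adding the last two displays, the terms $\int u\,H\mu(\theta(x))\,\bar{\mu}_{Q,t}$ cancel (so their individual finiteness is irrelevant), and since $\tfrac{\rho-1/2}{2}+\tfrac{1}{4}=\tfrac{\rho}{2}$ I get $E_{Q}(\mu)-E_{Q}(\mu_{Q})\le \epsilon\,I_{Q}(\mu)+\tfrac{1}{4\epsilon}W^{2}-\tfrac{\rho}{2}W^{2}$ for every $\epsilon>0$. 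Minimizing the right-hand side via $\inf_{\epsilon>0}\big(\epsilon\,I_{Q}(\mu)+\tfrac{W^{2}}{4\epsilon}\big)=\sqrt{I_{Q}(\mu)}\,W$, attained at $\epsilon=W/(2\sqrt{I_{Q}(\mu)})$, gives $E_{Q}(\mu)-E_{Q}(\mu_{Q})\le\sqrt{I_{Q}(\mu)}\,\mathcal{W}_{2}(\mu,\mu_{Q})-\tfrac{\rho}{2}\,\mathcal{W}_{2}(\mu,\mu_{Q})^{2}$, which is $HWI(\rho/2)$.

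The step I expect to be the main obstacle is the regularity bookkeeping that already appears in Theorems~\ref{t:4} and~\ref{t:lsi}: $\theta$ is only a monotone transport map and $H\mu$ is a principal value, so the computation above should first be carried out for $\mu$ with a smooth density bounded away from zero --- where $\theta$ is a diffeomorphism and every integral is genuine --- and then extended to arbitrary $\mu$ by approximation, using lower semicontinuity of $E_{Q}$, a corresponding semicontinuity of $I_{Q}$, and the fact (Proposition~\ref{p:W}(8)) that $\mathcal{W}_{2}$ metrizes weak convergence on atomless measures. A secondary, circle-specific point is to check that \eqref{1} is applied on the correct branch and that the two mean-zero cancellations above genuinely use the modified distance $\mathcal{W}_{2}$ rather than the ordinary $W_{2}$, which is precisely the feature the new distance was introduced to capture.
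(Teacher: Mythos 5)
Your proposal is correct and follows essentially the same route as the paper: the same transport setup from Proposition~\ref{p:W}, the same convexity bound on the potential, the same concavity inequality \eqref{1} (i.e.\ \eqref{2}) for the logarithmic term, and the same mean-zero cancellations $\int(x-\theta(x))\,d\bar{\mu}_{Q,t}=0$ and $\int H\mu\,d\mu=0$ tied to the modified distance $\mathcal{W}_{2}$. The only difference is cosmetic: the paper bounds the cross term $\int(x-\theta(x))\bigl(H\mu(\theta(x))-V'(\theta(x))\bigr)\,\bar{\mu}_{Q,t}(dx)$ directly by Cauchy--Schwarz after centering, whereas you absorb it via an $\epsilon$-completed square and optimize over $\epsilon$ at the end, which is the same estimate in Young-inequality form.
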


\begin{proof} We use the same notations as in the proof of the transportation inequality with the addition that $\theta$ is assumed to be smooth.  

After some elementary rearrangements, we need to prove that
\begin{align*}
\iint\log&\left(\frac{\sin((x-y)/2)}{\sin((\theta(x)-\theta(y))/2)}\right)\bar{\mu}_{Q,t}(dx)\bar{\mu}_{Q,t}(dy) \le \int (V(x)-V(\theta(x)))\bar{\mu}_{Q,t}(dx) \\ &+ \left( \int(x-\theta(x))^{2}\bar{\mu}_{Q,t}(dx)\right)^{1/2}\left(\int(H\mu(\theta(x))-V'(\theta(x)))^{2}\bar{\mu}_{Q,t}(dx)-\left(\int V'(\theta(x))\bar{\mu}_{Q,t}(dx) \right)^{2}\right)^{1/2}\\ & -\frac{\rho}{2}\int(x-\theta(x))^{2}\bar{\mu}_{Q,t}(dx).
\end{align*}
Since
\[
V(x)-V(\theta(x))\ge(\rho/2-1/4)(x-\theta(x))^{2}+(x-\theta(x))V'(\theta(x)),
\]
and \eqref{2}, it suffices to show that
\begin{align*}
\int(x-\theta(x))(H\mu(\theta(x))-V'(\theta(x))&\bar{\mu}_{Q,t}(dx)\le \left( \int(x-\theta(x))^{2}\bar{\mu}_{Q,t}(dx)\right)^{1/2}\times \\ &\left(\int(H\mu(\theta(x))-V'(\theta(x)))^{2}\bar{\mu}_{Q,t}(dx)-\left(\int V'(\theta(x))\bar{\mu}_{Q,t}(dx) \right)^{2}\right)^{1/2}.
\end{align*}
Let $f(x)=x-\theta(x)$, $g(x)=H\mu(\theta(x))-V'(\theta(x))$ and $h(x)=-V'(\theta(x))$.  Then we have that $\int f(x)\bar{\mu}_{Q,t}(dx)=0$, $\int g(x)\bar{\mu}_{Q,t}(dx)=\int h(x)\bar{\mu}_{Q,t}(dx)$, and the rest follows from 
\begin{align*}
\int f(x)g(x)\bar{\mu}_{Q,t}(dx)=&\int f(x)\left(g(x)-\int h(y)\bar{\mu}_{Q,t}(dy)\right)\bar{\mu}_{Q,t}(dx) \\ 
&\le \left(\int f^{2}(x)\bar{\mu}_{Q,t}(dx) \right)^{1/2}\left( \int \left(g(x)-\int h(y)\bar{\mu}_{Q,t}(dy) \right)^{2} \bar{\mu}_{Q,t}(dx)\right)^{1/2} 
\\ & =\left(\int f^{2}(x)\bar{\mu}_{Q,t}(dx) \right)^{1/2}\left( \int g^{2}(x)\bar{\mu}_{Q,t}(dx)-\left(\int h(y)\bar{\mu}_{Q,t}(dy)\right)^{2} \right)^{1/2}.\qedhere
\end{align*}
\end{proof}

\begin{remark}  As in the case of Log-Sobolev inequality and transportation, for $Q\equiv 0$, the $HWI(1/4)$ is probably not sharp.  We conjecture as well that the sharp inequality is $HWI(1/2)$, but we do not have a proof.  

\end{remark}

\subsection{Brunn-Minkowski inequality}

The result in this section is the following.  
\begin{theorem}
Assume that $Q_{1},Q_{2},Q_{3}$ are continuous potentials on $\T$ such that for some $a\in(0,1)$,
\begin{equation}\label{e:17}
a Q_{1}(e^{ix})+(1-a)Q_{2}(e^{iy})\ge Q_{3}(e^{i(ax+(1-a)y)})\quad\text{for all}\quad x,y\in\R,
\end{equation}
Then 
\begin{equation}\label{e:18'}
aE_{Q_{1}}(\mu_{Q_{1}})+(1-a)E_{Q_{2}}(\mu_{Q_{2}})\ge E_{Q_{3}}(\mu_{Q_{3}}).
\end{equation}
\end{theorem}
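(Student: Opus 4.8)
The plan is to run a displacement-interpolation argument in the spirit of the functional Brunn--Minkowski inequality, adapted to the logarithmic energy on the circle. First I would set $\mu_1=\mu_{Q_1}$ and $\mu_2=\mu_{Q_2}$; since $Q_1,Q_2$ are continuous these equilibrium measures have finite logarithmic energy, hence are atomless, so I can lift them to atomless probability measures $\bar\mu_1,\bar\mu_2$ supported in $[0,2\pi)$ (with $\exp_{\#}\bar\mu_i=\mu_i$, $\exp(t)=e^{it}$) and take the monotone transport map $\theta:[0,2\pi)\to[0,2\pi)$ with $\theta_{\#}\bar\mu_1=\bar\mu_2$. For the given $a\in(0,1)$ I would then form the interpolation map $T_a(x)=ax+(1-a)\theta(x)$, which is strictly increasing and maps $[0,2\pi)$ into $[0,2\pi]$, and the competitor measure $\mu_a:=(\exp\circ T_a)_{\#}\bar\mu_1\in\mathcal{P}(\T)$. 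Since $\mu_{Q_3}$ is the unique minimizer of $E_{Q_3}$ (Section~\ref{S:2}), it will be enough to show $E_{Q_3}(\mu_a)\le aE_{Q_1}(\mu_{Q_1})+(1-a)E_{Q_2}(\mu_{Q_2})$, and then \eqref{e:18'} follows.

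Next I would estimate the two pieces of $E_{Q_3}(\mu_a)$ separately. For the external-field term, applying the hypothesis \eqref{e:17} with $y=\theta(x)$ and integrating against $\bar\mu_1$ (using $\theta_{\#}\bar\mu_1=\bar\mu_2$) gives
\[
\int Q_3\,d\mu_a=\int Q_3\left(e^{iT_a(x)}\right)\bar\mu_1(dx)\ \le\ a\int Q_1\,d\mu_1+(1-a)\int Q_2\,d\mu_2 .
\]
For the interaction term I would write $|e^{is}-e^{it}|=2\bigl|\sin\tfrac{s-t}{2}\bigr|$ and invoke the concavity of $t\mapsto\log\sin t$ on $(0,\pi)$ (the same elementary convexity fact used in the transportation and HWI proofs). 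The point is that for $\bar\mu_1\otimes\bar\mu_1$-a.e.\ pair $(x,x')$ with $x>x'$ the three quantities $x-x'$, $\theta(x)-\theta(x')$ and $T_a(x)-T_a(x')=a(x-x')+(1-a)(\theta(x)-\theta(x'))$ all lie in $(0,2\pi)$, so concavity yields
\[
\log\Bigl|\sin\tfrac{T_a(x)-T_a(x')}{2}\Bigr|\ \ge\ a\log\Bigl|\sin\tfrac{x-x'}{2}\Bigr|+(1-a)\log\Bigl|\sin\tfrac{\theta(x)-\theta(x')}{2}\Bigr| ,
\]
with the additive constants $\log 2$ cancelling because $a+(1-a)=1$. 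Integrating this pointwise bound against $\bar\mu_1\otimes\bar\mu_1$ (all three kernels are bounded above, so this is harmless) and using $\theta_{\#}\bar\mu_1=\bar\mu_2$ once more gives
\[
\iint\log|z-w|\,\mu_a(dz)\mu_a(dw)\ \ge\ a\iint\log|z-w|\,\mu_1(dz)\mu_1(dw)+(1-a)\iint\log|z-w|\,\mu_2(dz)\mu_2(dw).
\]
Combining the two displays produces $E_{Q_3}(\mu_a)\le aE_{Q_1}(\mu_1)+(1-a)E_{Q_2}(\mu_2)$, which completes the argument.

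I expect the whole thing to be essentially routine; the only place that needs care is the bookkeeping of the lifts, namely making sure that for a.e.\ pair the interpolated gap $T_a(x)-T_a(x')$ stays inside $(0,2\pi)$ so that the single concavity inequality for $\log\sin$ on $(0,\pi)$ applies. This is automatic once $\bar\mu_1$ is lifted to an interval of length $2\pi$ and $\theta$ is the monotone rearrangement onto an interval of length $2\pi$. It is worth noting that, in contrast to the transportation, Log--Sobolev and HWI theorems, here no convexity of the potentials and no mean-zero constraint is used, which is why the modified distance $\mathcal{W}_2$ plays no role.
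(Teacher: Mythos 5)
Your proposal is correct and is essentially the same argument as the paper's: push $\bar\mu_{1}$ forward under the displacement interpolation $a\,\mathrm{id}+(1-a)\theta$ of the monotone transport map, bound the external-field term via \eqref{e:17} with $y=\theta(x)$, bound the interaction term via concavity of $\log\sin$ on $(0,\pi)$, and finish with the minimality of $\mu_{Q_{3}}$. Your extra care about the lifts and keeping the interpolated gaps in $(0,2\pi)$ is a fair (and welcome) tightening of bookkeeping the paper leaves implicit.
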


\begin{proof}  Map all measures onto $[0,2\pi)$ and then take the (non-decreasing) transportation map $\theta$ from $\nu_{1}=\bar{\mu}_{Q_{1},0}$ into $\nu_{2}=\bar{\mu}_{Q_{2},0}$.   The existence of $\theta$ is guaranteed by the fact that $\mu_{Q_{1}}$ does not have atoms. 

Now, we obviously have
\[
\iint\log|e^{ix}-e^{iy}|\mu(dx)\mu(dy)=2\iint_{x>y}\log(2\sin((x-y)/2))\mu(dx)\mu(dy).
\]  
 Using this we argue that    
\begin{align*}
\int & (aV_{1}(x)+(1-a)V_{2}(\theta(x)))\nu_{1}(dx) \\ 
&\qquad-2\iint_{x>y} \left\{a\log\left(2\sin\left(\frac{x-y}{2}\right)\right)+(1-a)\log\left(2\sin\left(\frac{\theta(x)-\theta(y)}{2} \right)\right)\right\}\nu_{1}(dx)\nu_{1}(dy)\\ 
&\ge \int V_{3}\big (ax+(1-a)\theta(x) \big )\nu_{1}(dx)
 - 2\iint_{x>y} \log\left(2\sin\left(\frac{((ax+(1-a)\theta(x))-(ay+(1-a)\theta(y))}{2}\right)\right)\nu_{1}(dx)\nu_{1}(dy) \\ 
& = E_{V_{3}}(\nu) \ge E_{Q_{3}}(\mu_{Q_{3}})
\end{align*}
where $\nu=(a {\rm{id}}+(1-a)\theta)_{\#}\nu_{1}$ and we used  the concavity of $\log\sin(t)$ on $(0,\pi)$. The proof is complete. \qedhere

\end{proof}

\section{Hierarchical Implications}\label{S:9}

In this section we study the implications between the inequalities introduced.  This is very similar to the classical inequalities.  

\subsection{HWI and Log-Sobolev}
The following is the analog to the classical result of Otto-Villani from \cite{OV}.  

\begin{corollary}\label{c:3}
\begin{enumerate}
\item For $\rho>0$, $HWI(\rho)$ implies $LSI(\rho)$.  

\item If $HWI(\rho)$ holds for some $\rho\in\R$, then $T(C)$ with $C>\max\{0,-\rho\}$ implies $LSI(K)$ inequality with constant 
\[
K=\begin{cases}
\frac{(C+\rho)^{2}}{4C} & \rho\le 0 \\
\rho & \rho>C>0\\ 
\frac{(C+\rho)^{2}}{4C} & 0<\rho\le C.
\end{cases}
\]  In particular, if $\rho>0$ and $C>\rho$, the constant $K>\rho$.  
\end{enumerate}
\end{corollary}

\begin{proof}
\begin{enumerate}
\item The conclusion follows from $ab-\rho a^{2}\le b^{2}/(4\rho)$ for any real numbers $a$ and $b$ and $\rho>0$.  
\item $HWI(\rho)$ and $T(C)$ yields that for any $\delta>0$, such that $\delta>\rho$,
\[\tag{*}
\begin{split}
E_{Q}(\mu)-E_{Q}(\mu_{Q})&\le  \mathcal{W}_{2}(\mu,\mu_{Q})\sqrt{I(\mu)}-\rho \, \mathcal{W}_{2}^{2}(\mu,\mu_{Q}) \\
& \le \frac{1}{4\delta} I(\mu)+\delta\mathcal{W}_{2}^{2}(\mu,\mu_{Q})-\rho \, \mathcal{W}_{2}^{2}(\mu,\mu_{Q}) \\
&\le  \frac{1}{4\delta} I(\mu)+\frac{\delta-\rho}{C} \big (E_{Q}(\mu)-E_{Q}(\mu_{Q}) \big ).
\end{split}
\]
In the case, $\rho\le 0$, for any $\delta<C+\rho$ we arrive at
\[\tag{**}
E(\mu)-E(\mu_{V})\le \frac{C}{4\delta(C+\rho-\delta)} \, I(\mu).
\]
The minimum over $\delta<C+\rho$ is attained at $\delta=\frac{C+\rho}{2}$ and this yields $LSI(\frac{(C+\rho)^{2}}{4C})$.    

In the case $\rho>0$, we have from the first part that $LSI(\rho)$ holds true.   On the other hand, to have (**) we need to ensure that $\rho<\delta<C+\rho$.  Now we need to maximize  $\delta(C+\rho-\delta)$ over $\delta$,  subject to the constraints $\rho<\delta<C+\rho$.  The unconstrained maximum is attained at $(C+\rho)/2$.  This is in the interval $[\rho,C+\rho]$ if $C>\rho$.  Otherwise the maximum is attained at $\rho$.  Therefore  the conclusion.    

\end{enumerate}
\end{proof}

\subsection{Log-Sobolev implies Transportation}

In this section we show how the Log-Sobolev implies the transportation inequality.   We use here the main idea from \cite{L}.   

\begin{theorem}
For a $C^{1}$ potential $Q$ and a positive $\rho>0$, $LSI(\rho)$ implies $T(\rho)$.  
\end{theorem}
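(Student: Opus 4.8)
The plan is to run the Otto--Villani strategy on $\T$, with the free Fokker--Planck flow replaced by its rotation-corrected version --- the one-parameter analogue of the correction already built into $\mathcal{W}_{2}$ and into $I_{Q}$. Write $\nu=\mu_{Q}$. Given a smooth probability measure $\mu$ with $E_{Q}(\mu)<\infty$ (hence atomless), let $\Xi_{\eta}(e^{ix})=V(x)-2\int\log|e^{ix}-e^{iy}|\,\eta(dy)$ be the first variation of $E_{Q}$ at $\eta$, so that $\Xi_{\eta}'=V'-H\eta$ (compare \eqref{ep:var2}). I would define the curve $(\mu_{s})_{s\ge0}$, with $\mu_{0}=\mu$, by
\[
\partial_{s}\mu_{s}=\partial_{x}\bigl(\mu_{s}\,(V'-H\mu_{s}-m_{s})\bigr),\qquad m_{s}:=\int V'\,d\mu_{s},
\]
i.e.\ the $W_{2}$-gradient flow of $E_{Q}$ with the rotation $m_{s}$ subtracted from the velocity field. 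Since $\inf_{\T}\tfrac{d\mu_{Q}}{d\alpha}>0$ by Corollary~\ref{c:2} and $Q$ is $C^{3}$, I would check by standard one-dimensional parabolic theory that this problem is globally well posed with $\mu_{s}$ a smooth, strictly positive density for all $s\ge0$. The velocity field $\mathbf{v}_{s}:=V'-H\mu_{s}-m_{s}$ has $\mu_{s}$-mean zero because $\int H\mu_{s}\,d\mu_{s}=0$.

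Two identities drive the argument. First, a de Bruijn identity: setting $h(s)=E_{Q}(\mu_{s})-E_{Q}(\nu)$,
\[
h'(s)=-\int(V'-H\mu_{s})\,\mathbf{v}_{s}\,d\mu_{s}=-\int(V'-H\mu_{s})^{2}\,d\mu_{s}+m_{s}^{2}=-I_{Q}(\mu_{s}),
\]
using $\int(V'-H\mu)^{2}\,d\mu=I_{Q}(\mu)+\bigl(\int V'\,d\mu\bigr)^{2}$. Second, a metric-speed bound for $\mathcal{W}_{2}$: because $\int\mathbf{v}_{s}\,d\mu_{s}=0$, lifting $\mu_{s}$ to $[u,u+2\pi)$ with a fixed $u$ keeps the barycentre $\int x\,\bar{\mu}_{s,u}(dx)$ constant in $s$; choosing $u$ and the companion interval for $\nu$ compatibly (Proposition~\ref{p:W}(6)), the lifted curve $s\mapsto\bar{\mu}_{s,u}$ solves a continuity equation on $\R$ with velocity of $L^{2}(\bar{\mu}_{s,u})$-norm $\|\mathbf{v}_{s}\|_{L^{2}(\mu_{s})}=\sqrt{I_{Q}(\mu_{s})}$, so $\mathcal{W}_{2}(\mu_{s},\nu)=W_{2}(\bar{\mu}_{s,u},\bar{\mu}_{Q,u'})$ is absolutely continuous with $\bigl|\tfrac{d}{ds}\mathcal{W}_{2}(\mu_{s},\nu)\bigr|\le\sqrt{I_{Q}(\mu_{s})}$.

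With these in hand the conclusion is the classical computation: $LSI(\rho)$ gives $I_{Q}(\mu_{s})\ge 4\rho\,h(s)$, hence $h'(s)\le-4\rho\,h(s)$, so $h(s)\le h(0)e^{-4\rho s}\to 0$ and $\mu_{s}\to\nu$; therefore
\[
\mathcal{W}_{2}(\mu,\nu)\le\int_{0}^{\infty}\sqrt{I_{Q}(\mu_{s})}\,ds=\int_{0}^{\infty}\frac{-h'(s)}{\sqrt{-h'(s)}}\,ds\le\int_{0}^{\infty}\frac{-h'(s)}{\sqrt{4\rho\,h(s)}}\,ds=\frac{1}{2\sqrt{\rho}}\bigl[-2\sqrt{h(s)}\bigr]_{0}^{\infty}=\sqrt{\frac{h(0)}{\rho}},
\]
which squares to $\rho\,\mathcal{W}_{2}^{2}(\mu,\mu_{Q})\le E_{Q}(\mu)-E_{Q}(\mu_{Q})$, i.e.\ $T(\rho)$.

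The delicate point --- and the reason the modified distance enters so naturally --- is the metric-speed step: one must ensure that along the flow no mass of $\bar{\mu}_{s,u}$ crosses the endpoints of $[u,u+2\pi)$, for otherwise the barycentre of the lift is not conserved and Proposition~\ref{p:W}(6) no longer applies with a single pair $(u,u')$. I would handle this locally in $s$ through the upper Dini derivative $\tfrac{d^{+}}{ds}\mathcal{W}_{2}(\mu_{s},\nu)$: near each $s_{0}$ re-choose $u$ so that $\bar{\mu}_{s_{0},u}$ carries no mass near its endpoints (possible since $\mu_{s_{0}}$ has a full, regular density), which is valid on a short time interval, and then integrate. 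The remaining items --- global well-posedness and positivity of $\mu_{s}$, and making the de Bruijn computation rigorous --- are routine given $\inf_{\T}\tfrac{d\mu_{Q}}{d\alpha}>0$. An alternative that sidesteps the flow is the Hamilton--Jacobi/Bobkov--Gentil--Ledoux route: feed the semigroup $\mathcal{U}_{t}^{\lambda}$ of Theorem~\ref{t:W}(2)--(3) into $LSI(\rho)$ and differentiate a one-parameter family of logarithmic-moment functionals, with $\lambda$ carrying the rotation correction.
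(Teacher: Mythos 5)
Your route is the Otto--Villani gradient-flow argument, which is genuinely different from the paper's proof: the paper never constructs a flow of measures, but instead combines the dual formulation of $\mathcal{W}_{2}$ from Theorem~\ref{t:W}, the functional $j_{Q}(\phi)=E_{Q}-E_{Q-\phi}$, and the equilibrium measures $\nu_{t}$ of the perturbed potentials $Q-h_{t}$ built from the Hamilton--Jacobi semigroup $\mathcal{U}^{\lambda}_{t}$, then integrates a differential inequality in $t$ and lets the parameter $a\searrow0$ (Ledoux's scheme from \cite{L}); this is exactly the ``alternative'' you mention in your last sentence. As written, your proposal has two genuine gaps. The first is well-posedness: the equation $\partial_{s}\mu_{s}=\partial_{x}\bigl(\mu_{s}(V'-H\mu_{s}-m_{s})\bigr)$ has no second-order term, so ``standard one-dimensional parabolic theory'' does not apply; it is a nonlocal transport (free Fokker--Planck/McKean--Vlasov) equation, and global existence, smoothness, strict positivity and the convergence $\mu_{s}\to\mu_{Q}$ for an arbitrary smooth initial measure of finite energy is a substantial PDE assertion on which every later step (the de Bruijn identity with principal-value integrals, the monotone flow map, $h(s)\to0$) depends. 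Avoiding precisely this input is the point of the paper's method, since equilibrium measures of perturbed potentials always exist and are controlled by Theorem~\ref{t:1} and Corollary~\ref{c:2}.

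The second gap is the metric-speed step, whose proposed fix is self-contradictory: you cannot ``re-choose $u$ so that $\bar{\mu}_{s_{0},u}$ carries no mass near its endpoints'' while also asserting that $\mu_{s_{0}}$ has a strictly positive density on all of $\T$; with full support, mass crosses every cut and the barycentre of a lift with fixed cut is not conserved. The estimate itself can be rescued, but by a different argument: lift the characteristic flow of your continuity equation to the universal cover; the lifted flow map from time $s$ to $s+h$ is nondecreasing and of degree one, and its displacement has $\mu_{s}$-mean zero precisely because $\int\mathbf{v}_{r}\,d\mu_{r}=0$, so it couples barycentre-matched lifts of $\mu_{s}$ and $\mu_{s+h}$; Proposition~\ref{p:W}(6) then gives $\mathcal{W}_{2}(\mu_{s},\mu_{s+h})\le\int_{s}^{s+h}\|\mathbf{v}_{r}\|_{L^{2}(\mu_{r})}\,dr$, and the triangle inequality of Proposition~\ref{p:W}(7) yields the Dini-derivative bound you need. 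You also use tacitly that $\mathcal{W}_{2}(\mu_{S},\mu_{Q})\to0$ as $S\to\infty$; this requires Proposition~\ref{p:W}(8) together with the observation that $h(S)\to0$ forces weak convergence to the unique minimizer. With these repairs, and a genuine well-posedness theorem for the flow, your route would indeed deliver $T(\rho)$; without them, the argument is incomplete, which is why the paper proves the implication through the dual formulation instead.
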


\begin{proof}  Take a probability measure $\mu$ on $\T$ such that $E_{Q}(\mu)<\infty$.  Thus the measure $\mu_{Q}$ does not have atoms and, according to Proposition~\ref{p:W}, we can find a $u\in[0,2\pi]$ such that there are lifts $\mu_{u}$ and $\mu_{Q,u}$ of $\mu$ and $\mu_{Q}$ to $[u,u+2\pi)$ such that they have the same mean.   

Now, according to Theorem~\ref{t:W}, we can represent the Wasserstein distance in the dual formulation and we will prove that the Log-Sobolev inequality implies the transportation inequality.

For a $\lambda\in\R$ and a periodic smooth function $g:\R\to\R$ we set $g_{t}(x)=\mathcal{U}^{\lambda}_{t}g$.  We have that 
\begin{equation}\label{e5b:1}
\partial_{t}g_{t}=-\frac{1}{4}(g_{t}'-\lambda)^{2} \text{ with }g_{0}(x)=g(x).  
\end{equation}

Now assume that the Log-Sobolev inequality \eqref{e4:1} holds for some constant $\rho>0$.  

For a given function $\phi$ on the circle (interpreted as a periodic function on the real line), define
\[
j_{Q}(\phi)=E_{Q}-E_{Q-\phi}
\]
and notice that 
\begin{equation}\label{e5b:1b}
j_{Q}(\phi)\ge E_{Q}-E_{Q-\phi}(\mu)=\int \phi\, d\mu-(E_{Q}(\mu)-E_{Q})
\end{equation}
for any measure $\mu$.  

Now, for $a>0$,  take 
\[
h_{t}=(a+\rho t)g_{t}-j_{t}, \text{ with } j_{t}=j_{Q}((a+\rho t)g_{t})
\]
and consider $\nu_{t}$ the equilibrium measure of $Q-h_{t}$.   Observe that this is well defined due to the fact that $h_{t}$ is a periodic function, thus a smooth function on the circle.  Notice that $j_{Q}(h_{t})=0$ and from this and \eqref{e5b:1b} we obtain that, 
\[
E_{Q}(\nu_{t})-E_{Q}\ge \int h_{s}d\nu_{t}
\]
for any $s\ge0$ and $t\ge0$ with equality for $s=t$.  This yields that $\int \partial_{t}h_{t}\,d\nu_{t}=0$.   

Now, applying $LSI(\rho)$ to $\nu_{t}$ and keeping in mind that $H\nu_{t}=Q'-h_{t}'$ (see \cite{ST}) on the support of $\nu_{t}$, we obtain that 
\[
   4\rho \int h_{t}\, d\nu_{t}\le \int (h_{t}')^{2}d\nu_{t}-\left( \int Q' d\nu_{t}\right)^{2}
\]
which upon using \eqref{e5b:1} gives 
\[
4\rho (a+\rho t)\int g_{t}\,d\nu_{t}-4\rho j_{t}\le -(a+\rho t)^{2}\left(4\int\partial_{t}g_{t}\,d\nu_{t}-2\lambda\int g_{t}'d\nu_{t}+\lambda^{2} \right)-\left( \int Q' d\nu_{t}\right)^{2}.
\]
Since $\int \partial_{t}h_{t}\,d\nu_{t}=0$, we can continue with 
\[
4\partial_{t}\left( \frac{j_{t}}{a+\rho t}\right) \le 2\lambda\int g_{t}'\,d\nu_{t}-\lambda^{2}-\frac{1}{(a+\rho t)^{2}}\left( \int Q' d\nu_{t}\right)^{2}. 
\]
We observe now that because $Q$ and $h_{t}$ are $C^{1}$, combined with \eqref{e1:03}, we get that 
\[
\int (Q'-h_{t}')d\nu_{t}=0.
\]
Written differently, we obtain that $\int Q'd\nu_{t}=(a+\rho t)\int g_{t}'\,d\nu_{t}$ and this gives
\[
2\lambda\int g_{t}'\,d\nu_{t}-\lambda^{2}-\frac{1}{(a+\rho t)^{2}}\left( \int Q' d\nu_{t}\right)^{2}=-\left( \int g_{t}'d\nu_{t}-\lambda \right)^{2}.
\]
Thus, we arrive at  
\begin{equation}\label{e9:tim}
4\partial_{t}\left( \frac{j_{t}}{a+\rho t}\right) \le -\left( \int g_{t}'\,d\nu_{t}-\lambda \right)^{2}\le 0. 
\end{equation}
and then integrating this from $0$ to $1$ gives, 
\[
4\frac{j_{1}}{a+\rho }-4\frac{j_{0}}{a }  \le 0
\]
which combined with \eqref{e5b:1b} becomes
\[
\int g_{1}\,d\mu- \frac{1}{a+\rho}(E_{Q}(\mu)-E_{Q})\le \frac{j_{0}}{a}. 
\]
Now, letting $a\searrow0$  leads to
\begin{equation}\label{e:tmp2}
\int g_{1}\,d\mu- \frac{1}{\rho}(E_{Q}(\mu)-E_{Q}) \le  \int g\,d\mu_{Q}.
\end{equation}
This is the same as
\begin{equation}\label{e5b:10}
\rho\left( \int g_{1}\,d\mu-\int g\,d\mu_{Q}\right)\le E_{Q}(\mu)-E_{Q},
\end{equation}
which, according to Theorem~\ref{t:W} and simple approximations yields
\[
\rho \mathcal{W}_{2}^{2}(\mu,\mu_{Q})\le E_{Q}(\mu)-E_{Q}.  
\]
To see why \eqref{e:tmp2} holds true we take the argument from a version of \cite{L} which is available on page 8 at \url{https://www.math.univ-toulouse.fr/~ledoux/free.pdf}.  For the sake of completeness we sketch the idea.  

We can write for any smooth function $\phi$ on $\T$ and $\delta>0$
\[
j_{Q}(\phi)=\sup_{\nu}\left\{ \int \phi\,d\nu - (E_{Q}(\nu)-E_{Q}) \right\}\le \max\left\{ \int\phi\,d\mu_{Q}+\delta, \sup_{z\in\T}|\phi(z)|-\sup_{\nu\notin\mathcal{A}_{\delta}}(E_{Q}(\nu)-E_{Q})\right\}
\]
where the supremum is taken over all probability measures on $\T$ and $\mathcal{A}_{\delta}=\{\nu\in\mathcal{P}(\T):\int \phi\,d\nu \le \int\phi\,d\mu_{Q}+\delta\}$.  Notice that for any fixed $\delta>0$, $\sup_{\nu\notin\mathcal{A}_{\delta}}(E_{Q}(\nu)-E_{Q})>0$ because of the lower semicontinuity of the functional $E_{Q}(\nu)$ and the uniqueness of the minimizer.  Thus, taking $\phi=ag$, we obtain 
\[
\frac{j_{0}}{a}\le \max\left\{ \int g\,d\mu_{Q}+\delta, \sup_{z\in\T}|\phi(z)|-\frac{1}{a}\sup_{\nu\notin\mathcal{A}_{\delta}}(E_{Q}(\nu)-E_{Q})\right\}
\]
which after letting $a\searrow0$ and then $\delta \searrow0$ gives \eqref{e:tmp2}.  \qedhere

\end{proof}

Notice that in principle we might get an improvement to the transportation inequality because in  \eqref{e9:tim}  we estimate the middle term by $0$, though there is no immediate interpretation of the resulting integral.

\subsection{Transportation, Log-Sobolev and HWI imply Poincar\'e}

The main result is the following.  

\begin{theorem} Take a $C^{3}$ potential $Q$ such that $\inf_{\T}\frac{d\mu_{Q}}{d\alpha}>0$. Then the following hold true: 
\begin{enumerate}
\item $T(\rho)$ implies $P(\rho)$;
\item $LSI(\rho)$ implies $P(\rho)$; 
\item $HWI(\rho)$ implies $P(\rho)$. 
\end{enumerate}
\end{theorem}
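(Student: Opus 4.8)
The plan is to obtain each implication by \emph{linearizing} the corresponding inequality at $\mu_{Q}$ along the family of probability measures
\[
\mu_{t}=\mu_{Q}+t\,h\,\alpha ,\qquad h:\T\to\R\text{ smooth with }\textstyle\int h\,d\alpha=0,
\]
which is admissible for small $|t|$ because $u:=\frac{d\mu_{Q}}{d\alpha}$, which by \eqref{e:d} equals $1-\mathcal N Q$, is continuous with $\inf_{\T}u>0$. Three facts about this family will be used. First, inserting $\mu_{t}$ into $E_{Q}$, the linear term vanishes by the equality part of \eqref{ep:var2} (which holds on all of $\T$ since $\mu_{Q}$ has full support) and the quadratic term is read off from the definition \eqref{e:EN} of $\mathcal E$, giving the \emph{exact} identity $E_{Q}(\mu_{t})-E_{Q}(\mu_{Q})=\tfrac{t^{2}}{2}\langle\mathcal E h,h\rangle$. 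Second, since $H\mu_{Q}=Q'$ on $\T$ (the differentiated form of \eqref{ep:var2}) and $H$ is linear in the measure, $H\mu_{t}=Q'+t\,\widetilde h$, where $\widetilde h$ is the harmonic conjugate of $h$ (i.e.\ $\widehat{\widetilde h}(n)=-i\,\mathrm{sign}(n)\widehat h(n)$); combined with $\int(H\mu_{t})\,d\mu_{t}=0$ this yields $\int Q'\,d\mu_{t}=-t\int\widetilde h\,d\mu_{Q}$ and hence $I_{Q}(\mu_{t})=t^{2}\Var_{\mu_{Q}}(\widetilde h)+o(t^{2})$. Third, the key algebraic point: reparametrizing $h=-\mathcal N\phi$ (a bijection from smooth functions modulo constants onto smooth mean-zero functions), $\mathcal E h=-\mathcal E\mathcal N\phi=-\phi+\int\phi\,d\alpha$ gives $\langle\mathcal E h,h\rangle=\langle\mathcal N\phi,\phi\rangle$, and a Fourier computation gives $\widetilde h=\phi'$, so $\Var_{\mu_{Q}}(\widetilde h)=\Var_{\mu_{Q}}(\phi')$.

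With these in hand part (2) is immediate: applying $LSI(\rho)$ to $\mu=\mu_{t}$ gives $\tfrac{t^{2}}{2}\langle\mathcal N\phi,\phi\rangle\le\tfrac1{4\rho}\big(t^{2}\Var_{\mu_{Q}}(\phi')+o(t^{2})\big)$, and letting $t\to0$ yields exactly $2\rho\langle\mathcal N\phi,\phi\rangle\le\Var_{\mu_{Q}}(\phi')$, i.e.\ $P(\rho)$ (the complex case of the Poincar\'e test function follows by splitting into real and imaginary parts). Part (3) follows from part (2) together with the corollary above that $HWI(\rho)\Rightarrow LSI(\rho)$ for $\rho>0$; alternatively one linearizes $HWI(\rho)$ directly, bounding its right-hand side by $\tfrac1{4\rho}I_{Q}(\mu_{t})$ via $ab-\rho a^{2}\le b^{2}/4\rho$ (with $a=\mathcal W_{2}(\mu_{t},\mu_{Q})=O(t)$, $b=\sqrt{I_{Q}(\mu_{t})}=O(t)$) and concluding as in part (2).

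The genuinely new ingredient is part (1), for which one needs the second-order behaviour of the modified distance,
\[
\mathcal W_{2}^{2}(\mu_{t},\mu_{Q})=t^{2}\int\frac{\Psi_{h}^{2}}{u}\,d\alpha+o(t^{2}),
\]
where $\Psi_{h}$ is the mean-zero periodic primitive of $-h$. Indeed, by Proposition~\ref{p:W}(6), $\mathcal W_{2}(\mu_{t},\mu_{Q})$ equals the ordinary Wasserstein distance of two lifts with equal means, realized by the monotone transport $\theta_{t}=\mathrm{id}+t\,w+o(t)$; linearizing the cumulative-function description in Proposition~\ref{p:W}(6) forces $(uw)'=-h$, and equality of means forces $\int w\,d\mu_{Q}=0$, so $w=\Psi_{h}/u$ and the displayed formula follows. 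Now let $\mathcal A$ be the nonnegative operator on $L^{2}_{0}(\alpha)$ determined by $\langle\mathcal A\phi,\phi\rangle=\Var_{\mu_{Q}}(\phi')$; a short computation (solving $\mathcal A\phi=h$) shows $\int\Psi_{h}^{2}/u\,d\alpha=\langle\mathcal A^{-1}h,h\rangle$. Feeding this and the energy identity into $T(\rho)$ and letting $t\to0$ gives $\rho\langle\mathcal A^{-1}h,h\rangle\le\tfrac12\langle\mathcal E h,h\rangle$ for all smooth $h\in L^{2}_{0}(\alpha)$, i.e.\ $\mathcal A^{-1}\le\tfrac1{2\rho}\mathcal E$ as quadratic forms; inverting the positive operators (equivalently, by Legendre duality of the two quadratic forms) gives $\mathcal A\ge 2\rho\,\mathcal E^{-1}=2\rho\,\mathcal N$ on $L^{2}_{0}(\alpha)$, which is precisely $P(\rho)$.

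I expect the main obstacle to be the $\mathcal W_{2}$-expansion in part (1): one must make the estimate $\theta_{t}=\mathrm{id}+t\,w+o(t)$ and the resulting $t^{-2}\mathcal W_{2}^{2}(\mu_{t},\mu_{Q})\to\int w^{2}\,d\mu_{Q}$ rigorous \emph{using only} the monotone-transport and cumulative-function description of Proposition~\ref{p:W}, since $\mathcal W_{2}$ is not the geodesic distance of an obvious Riemannian metric and no Otto/Benamou--Brenier machinery is available off the shelf. Everything else reduces to bookkeeping with $\mathcal N$, $\mathcal E$ and their Fourier symbols.
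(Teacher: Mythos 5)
Your treatment of (2) and (3) is essentially the paper's own: the paper also proves (2) by linearizing $LSI(\rho)$ along $\nu_{t}=\mu_{Q}-t\mathcal{N}f\,\alpha$ (your $h=-\mathcal{N}\phi$), using $E_{Q}(\nu_{t})-E_{Q}=\tfrac{t^{2}}{2}\langle\mathcal{N}f,f\rangle$ and $H\nu_{t}=Q'\pm tf'$, and gets (3) from the already-proved implication $HWI(\rho)\Rightarrow LSI(\rho)$. Where you genuinely diverge is (1): the paper never expands $\mathcal{W}_{2}^{2}(\mu_{t},\mu_{Q})$ at all. It uses the dual description of $\mathcal{W}_{2}$ from Theorem~\ref{t:W}, rewrites $T(\rho)$ as $E_{Q}-\rho\int f\,d\mu_{Q}\le E_{Q-\rho\,\mathcal{U}_{1}^{\lambda}f}$ by minimizing over $\mu$, replaces $f$ by $tf$ and $\lambda$ by $t\lambda$, and reads off $2\rho\langle\mathcal{N}f,f\rangle\le\int(f'-\lambda)^{2}d\mu_{Q}$ from the perturbation formula \eqref{ep:101}; minimizing over $\lambda$ then produces the variance, so the extra term $\bigl(\int f'\,d\mu_{Q}\bigr)^{2}$ appears automatically and no transport-map analysis is needed. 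Your primal route is legitimate, and its one nonobvious algebraic claim is in fact correct: with $\Psi_{h}$ the $\alpha$-mean-zero primitive of $-h$, the concave maximization of $2\int\Psi_{h}\phi'\,d\alpha-\Var_{\mu_{Q}}(\phi')$ over periodic $\phi$ is attained at $\phi'=\Psi_{h}/u+m$ with $m=-\int(\Psi_{h}/u)\,d\alpha$, and the square-of-mean term in $\Var_{\mu_{Q}}$ exactly cancels the contribution of $m$, giving $\int\Psi_{h}^{2}/u\,d\alpha=\langle\mathcal{A}^{-1}h,h\rangle$; this cancellation is precisely the analytic shadow of the equal-means constraint in $\mathcal{W}_{2}$, and the final Legendre-duality step (choosing $h=2\rho\,\mathcal{N}\phi$, say) does yield $P(\rho)$. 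The cost of your route is the step you flag yourself: you need (only) the lower bound $\liminf_{t\to0}t^{-2}\mathcal{W}_{2}^{2}(\mu_{t},\mu_{Q})\ge\int\Psi_{h}^{2}/u\,d\alpha$, and this is asserted rather than proved. It is fillable: by Proposition~\ref{p:W}(6) the distance equals $\int(\theta_{t}-\mathrm{id})^{2}d\bar{\mu}_{Q}$ exactly for the monotone map between equal-mean lifts, and since the densities of $\mu_{t}$ and $\mu_{Q}$ are bounded above and below for small $t$, the cumulative functions and their inverses are uniformly bi-Lipschitz, which gives $\theta_{t}=\mathrm{id}+tw+o(t)$ once the moving cut point is tracked (it only shifts the constant of integration, which the mean condition $\int w\,d\mu_{Q}=0$ then fixes). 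Alternatively the needed lower bound follows from the duality of Theorem~\ref{t:W} with a test function built from the mean-zero primitive, at which point your argument essentially collapses onto the paper's. In short: same proof as the paper for (2)--(3); for (1) a genuinely different, somewhat heavier route whose key $\mathcal{W}_{2}$-expansion still has to be carried out, whereas the paper's duality argument buys that step for free.
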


\begin{proof}

\begin{enumerate}
\item 
Assume that the transportation inequality $T(\rho)$ holds for some $\rho>0$.  Then using the dual formulation from Theorem~\ref{t:W}  with the notations from there, for any real number $\lambda$ and smooth periodic function $f:\R\to\R$,
\[
\rho \int \mathcal{U}_{1}^{\lambda}fd\mu -\rho\int fd\mu_{Q}\le E_{Q}(\mu)-E_{Q}(\mu_{Q}) 
\]
or equivalently, for any measure $\mu$,
\[
E_{Q}(\mu_{Q})-\rho\int f d\mu_{Q}\le E_{Q-\rho \mathcal{U}_{1}^{\lambda}f}(\mu).
\]
 Minimizing over all measures $\mu$, we get that 
\[
E_{Q}-\rho\int f\,d\mu_{Q}\le E_{Q-\rho \mathcal{U}_{1}^{\lambda}f}.
\]
Now, replacing $f$ by $tf$ with small $t$ and $\lambda$ by $t\lambda$, and the easily checked fact that 
\[
\mathcal{U}^{t\lambda}_{1}(tf)=tf+t^{2}(f'-\lambda)^{2}/4+o(t^{2})
\]
we arrive at
\[
E_{Q}-\rho t\int f\,d\mu_{Q}\le E_{Q-\rho t f -\rho t^{2}(f'-\lambda)^{2}/4+o(t^{2})}.
\]
This combined with \eqref{ep:101}, yields for small $t$, 
\[
E_{Q}-\rho t\int f d\mu_{Q}\le E_{Q}-\rho t\int f\,d\mu_{Q}-t^{2} \left(\frac{\rho^{2}}{2}\langle \mathcal{N}f,f \rangle-\frac{\rho}{4}\int(f'-\lambda)^{2}d\mu_{Q} \right)+o(t^{2}).
\]
In turn this implies
\[
2\rho\langle \mathcal{N}f,f \rangle\le \int (f'-\lambda)^{2}d\mu_{Q}.
\]  
Finally, minimizing over $\lambda$ results with 
\[
2\rho\langle \mathcal{N}f,f \rangle\le \int (f')^{2}d\mu_{Q}-\left(\int f'd\mu_{Q}\right)^{2}
\]
which is precisely $P(\rho)$.  

\item From the above implications, we know that Log-Sobolev implies the transportation, thus in particular also implies the Poincar\'e.  However, we provide a direct proof here which is relatively simple and shows that the linearization of Log-Sobolev is also the Poincar\'e.   

To see this, take a smooth function $f$ such that $\int f\,d\alpha=0$ and consider now the measure $\nu_{t}=\mu_{Q}-t\mathcal{N}f\alpha$.  Notice that for small $t$ this is well defined, in other words, a probability measure.  In fact, we learn from \eqref{e:d} that
\[
\nu_{t}=(1-\mathcal{N}(Q+tf))\alpha.
\]

 Now apply $LSI(\rho)$ to obtain that 
\[
4\rho(E_{Q}(\nu_{t})-E_{Q})\le \int (H\nu_{t}-Q')^{2}(1-\mathcal{N}(Q+tf))d\alpha- \langle Q',\mathcal{N}(Q+tf) \rangle^{2}. 
\]
Notice now that from Proposition~\ref{p:1},
\begin{align*}
E_{Q}(\nu_{t})&=E_{Q}-t\langle \mathcal{N}f,Q \rangle+2t\iint \log|z-w|\mathcal{N}f\,\alpha(dw)\mu_{Q}(dz)-t^{2}\iint \log|z-w|\mathcal{N}f(z)\mathcal{N}f(w)\alpha(dw)\alpha(dz) \\ 
&= E_{Q}-t\langle \mathcal{N}f,Q \rangle-t\int \mathcal{E}\mathcal{N}f(z)\,d\mu_{Q}(dz)+\frac{t^{2}}{2}\langle \mathcal{E}\mathcal{N}f,\mathcal{N}f \rangle\\
&= E_{Q}+\frac{t^{2}}{2}\langle \mathcal{N}f,f 
\rangle
\end{align*}
where the cancellation of the  coefficient of $t$ is again a consequence of Proposition~\ref{p:1}.   Furthermore, since $H(f\alpha)=(\mathcal{E}f)'=\mathcal{E}f'$ and  $H(\nu_{t})=H(\mu_{Q})-tH(\mathcal{N}f\alpha)=Q'+t(\mathcal{E}\mathcal{N}f)'=Q'+tf'$ combined with the fact that $\mathcal{N}$ commutes with the derivative and the above computation, we arrive at
\[
2\rho t^{2}\langle\mathcal{N}f,f \rangle\le t^{2}\int (f')^{2}\,d\mu_{Q}-t^{2}\left( \int f' d\mu_{Q} \right)^{2}+o(t^{2}), 
\]
which implies the Poincar\'e inequality  $P(\rho)$ as set in \eqref{e1:1}. 

\item Since $HWI(\rho)$ implies $LSI(\rho)$, the proof follows.  
\end{enumerate}

\end{proof}

\begin{remark}
For the case of $Q=0$, we proved $T(1/4)$, $LSI(1/4)$ and $HWI(1/4)$.  All of these in turn imply $P(1/4)$,  which is a weaker result than the one provided by  Theorem~\ref{t:p}.
\end{remark}

\section{Potential independent inequalities} \label{S:10}

In this section we introduce some versions of the functional inequalities which are independent of the potential.  

We start with the transportation inequality.   The idea is the following.  Assume that $Q$ is an arbitrary potential on $\T$ and $\mu_{Q}$ is the equilibrium measure associated to it.  Then, by the variational characterization of the equilibrium measure $\mu_{Q}$, we know that for some constant $C$,
\[
Q(z)\ge2\int \log|z-w|\mu_{Q}(dw)+C
\]
with equality on the support of $\mu_{Q}$.  Consequently for any other measure $\mu$, we have 
\[
\begin{split}
E_{Q}(\mu)-E_{Q}&=\int Q d\mu -\iint \log|z-w|\mu(dz)\mu(dw)-\int Qd\mu_{Q}+\iint \log|z-w|\mu_{Q}(dz)\mu_{Q}(dw) \\
&\ge -\iint \log|z-w|(\mu-\mu_{Q})(dz)(\mu-\mu_{Q})(dw).
\end{split}
\]
Thus it makes sense to define the following version of the relative entropy, one which is independent of the potential.  For any two measures on $\T$ with finite logarithmic energy we set
\begin{equation}\label{e10:H}
\mathcal{H}(\mu,\nu)=-\iint \log|z-w|(\mu-\nu)(dz)(\mu-\nu)(dw). 
\end{equation}

The first result of this section is the following transportation inequality.  

\begin{theorem}
For any $\mu,\nu\in\mathcal{P}(\T)$, 
\begin{equation}\label{e:pi:1}
\mathcal{W}_{1}^{2}(\mu,\nu)\le 2\mathcal{H}(\mu,\nu).
\end{equation}
The equality is not attained.  

In particular, for any potential $Q$ on $\T$, we have 
\[
\mathcal{W}_{1}^{2}(\mu,\mu_{Q})\le 2(E_{Q}(\mu)-E_{Q}).
\]
\end{theorem}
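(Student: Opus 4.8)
The plan splits into a trivial reduction and the real estimate. The ``in particular'' clause follows from the first inequality with no extra work: for a potential $Q$ on $\T$ the variational characterization \eqref{ep:var} of $\mu_Q$ gives, exactly as in the computation displayed just above the statement, $E_Q(\mu)-E_Q\ge -\iint\log|z-w|\,(\mu-\mu_Q)(dz)(\mu-\mu_Q)(dw)=\mathcal H(\mu,\mu_Q)$, so combining with $\mathcal W_1^2(\mu,\mu_Q)\le\mathcal H(\mu,\mu_Q)$ yields the claim. Thus everything rests on $\mathcal W_1^2(\mu,\nu)\le\mathcal H(\mu,\nu)$; we may assume $\mathcal H(\mu,\nu)<\infty$, and by approximation it suffices to treat $\mu,\nu$ with smooth positive densities (so in particular $\mu,\nu\in\mathcal P_{as}(\T)$).

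For the main inequality I would put $\sigma=\mu-\nu$, $\widehat\sigma(n)=\int z^{n}\,d\sigma$ (note $\widehat\sigma(0)=0$ and $\widehat\sigma(-n)=\overline{\widehat\sigma(n)}$), and translate both sides into Fourier language. By the Lemma of Section~\ref{S:3}, the definition \eqref{e10:H} reads $\mathcal H(\mu,\nu)=\sum_{n\ge1}\frac1n|\widehat\sigma(n)|^{2}$. On the other side, Theorem~\ref{t:W}(4) describes $\mathcal W_1(\mu,\nu)$ as the supremum of $\int f\,d\sigma$ over smooth periodic $f$ and $\lambda\in\R$ with $\|f'-\lambda\|_\infty\le1$; expanding $f$ in a Fourier series and using $\widehat f(n)=\widehat{(f'-\lambda)}(n)/(in)$ for $n\ne0$ together with $\widehat\sigma(0)=0$ gives
\[
\int f\,d\sigma=\big\langle\, f'-\lambda\,,\,K_\sigma\,\big\rangle,\qquad
K_\sigma:=\sum_{n\ne0}\frac{\overline{\widehat\sigma(n)}}{-in}\,e^{inx},
\]
where $K_\sigma$ is real-valued and has mean zero. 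Since $f'-\lambda$ runs over exactly the real functions of sup-norm at most $1$ as $(\lambda,f)$ runs over admissible pairs, the supremum equals $\mathcal W_1(\mu,\nu)=\|K_\sigma\|_{L^1(\alpha)}$, while $\sum_{n\ge1}\frac1n|\widehat\sigma(n)|^{2}=\tfrac12\sum_{n\ne0}|n|\,|\widehat\sigma(n)/n|^{2}=\tfrac12\|K_\sigma\|_{\dot H^{1/2}(\alpha)}^{2}$, where $\|u\|_{\dot H^{1/2}(\alpha)}^{2}:=\sum_{n\ne0}|n|\,|\widehat u(n)|^{2}$. Hence the inequality to be proved is precisely the Sobolev-type bound $\|u\|_{L^1(\alpha)}^{2}\le\tfrac12\|u\|_{\dot H^{1/2}(\alpha)}^{2}$ for mean-zero $u$ (apply it to $u=K_\sigma$), and ``sharpness'' is optimality of the constant in this embedding.

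The elementary chain $\|u\|_{L^1(\alpha)}\le\|u\|_{L^2(\alpha)}=\big(\sum_{n\ne0}|\widehat u(n)|^{2}\big)^{1/2}\le\|u\|_{\dot H^{1/2}(\alpha)}$ already produces a transportation inequality of the correct homogeneity, so the qualitative statement is easy; the difficulty is entirely in the constant. The Cauchy--Schwarz loss is concentrated in the low Fourier modes — the extremal test functions $f'-\lambda$ are sawtooth-type functions — and on the dual side this corresponds to taking $\mu$ a one-Fourier-mode perturbation of the Haar measure and $\nu=\alpha$ (recall $\mu_0=\alpha$ by Corollary~\ref{c:1}). I therefore expect the genuine obstacle to be the precise evaluation of the sharp constant in the $\dot H^{1/2}\hookrightarrow L^1$ embedding on $\T$ (equivalently, the identification of the extremal profile for $K_\sigma$); once that optimal constant is pinned down, the rest of the proof is the routine Fourier bookkeeping above plus the standard approximation from smooth densities to general $\mu,\nu\in\mathcal P_{as}(\T)$.
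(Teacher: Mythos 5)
Your reduction is faithful and your Fourier bookkeeping is correct (the ``in particular'' step, the reduction to smooth densities, the use of the duality \eqref{e:t21}, and the identities $\mathcal{W}_{1}(\mu,\nu)=\|K_{\sigma}\|_{L^{1}(\alpha)}$, $\mathcal{H}(\mu,\nu)=\tfrac12\sum_{n\ne0}|n||\widehat{K_{\sigma}}(n)|^{2}$ all check out), but the proof stops exactly where the theorem lives: the inequality $\|u\|_{L^{1}(\alpha)}^{2}\le\tfrac12\sum_{n\ne0}|n||\widehat u(n)|^{2}$ is never proved, only flagged as ``the genuine obstacle''. Worse, that inequality is false: take $u(x)=\cos x$, for which $\|u\|_{L^{1}(\alpha)}^{2}=4/\pi^{2}\approx0.405$ while $\tfrac12\sum_{n\ne0}|n||\widehat u(n)|^{2}=\tfrac14$. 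Tracing back through your own dictionary, this $u$ corresponds to $\mu=(1+\Im z)\,\alpha$, $\nu=\alpha$, for which $\mathcal{W}_{1}(\mu,\alpha)=W_{1}(\mu,\alpha)=2/\pi$ (test with the $1$-Lipschitz triangle wave whose derivative is $\mathrm{sign}(\cos)$) and $\mathcal{H}(\mu,\alpha)=\tfrac14$ by \eqref{e10:H}. So the route of hunting for the sharp constant in a homogeneous $L^{1}$ versus half-derivative embedding cannot be completed; what your computation actually uncovers is a factor-of-$2$ slip in the paper itself, since $\langle\mathcal{E}\phi,\phi\rangle=-2\iint\log|z-w|\phi(z)\phi(w)\,\alpha(dz)\alpha(dw)=2\mathcal{H}(\mu,\nu)$ and not $\mathcal{H}(\mu,\nu)$, so the argument there really proves $\mathcal{W}_{1}^{2}\le 2\mathcal{H}$ (equivalently $\mathcal{W}_{1}^{2}\le\langle\mathcal{E}\phi,\phi\rangle$), which is what the statement should say in the normalization \eqref{e10:H}.

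The second point to absorb is that the ``elementary chain'' you set aside as merely giving the right homogeneity is precisely the paper's proof. There one writes, via \eqref{e:pe:0}, $\int g\,d\sigma=\langle g'-\lambda,\mathcal{E}^{2}\phi'\rangle$ for $\sigma=\phi\,\alpha$, bounds this by Cauchy--Schwarz using $\|g'-\lambda\|_{L^{2}(\alpha)}\le\|g'-\lambda\|_{\infty}\le1$, and then uses $\langle\mathcal{E}^{2}\phi,\phi\rangle\le\langle\mathcal{E}\phi,\phi\rangle$; in your notation this is exactly $\|K_{\sigma}\|_{L^{1}}\le\|K_{\sigma}\|_{L^{2}}\le\bigl(\sum_{n\ne0}|n||\widehat{K_{\sigma}}(n)|^{2}\bigr)^{1/2}$, with $\mathcal{E}^{2}\phi'=K_{\sigma}$. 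No sharp half-derivative embedding constant is needed, and none is available with the constant you ask for. If you rewrite your argument keeping this chain and stating the conclusion as $\mathcal{W}_{1}^{2}(\mu,\nu)\le\langle\mathcal{E}\phi,\phi\rangle$ (i.e.\ $2\mathcal{H}$ in the normalization \eqref{e10:H}), you recover the paper's result; the sharpness discussion then requires tracking when all three inequalities in the chain nearly saturate simultaneously, which is delicate (constancy of $|K_{\sigma}|$ competes with spectral concentration on the modes $n=\pm1$) and deserves more care than either your sketch or the paper's one-line claim gives it.
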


\begin{proof}
By careful approximations, it suffices to assume that $\mu-\nu=\phi\, \alpha$ for some smooth function $\phi:\T\to\R$.  From \eqref{e:t21} we have 
\[
\mathcal{W}_{1}(\mu,\nu)=\sup_{\lambda,g} \left\{ \int g\,d(\mu-\nu): |g(x)-g(y)-\lambda(x-y)|\le |x-y| \right\},
\]
where the supremum is taken over all $\lambda\in\R$ and periodic functions $g$.  

Now, for any smooth function $g$ such that $|g'-\lambda|\le 1$ ($\alpha$-almost surely),  \eqref{e:pe:0} and the fact that the derivatives commute with $\mathcal{E}$ combined with \eqref{ep:10:e} of Proposition~\ref{p:1}, we obtain that  
\[
\begin{split}
\int g\phi\,d\alpha=\langle g,\phi \rangle&= \langle g',\mathcal{E}^{2}\phi'  \rangle=\langle g'-\lambda,\mathcal{E}^{2}\phi'  \rangle\le \left(\int |g'-\lambda|^{2}d\alpha\right)^{1/2}\left( \int |\mathcal{E}^{2}\phi'|^{2}d\alpha\right)^{1/2} \\ 
&\le \langle \mathcal{E}^{2}\phi' ,\mathcal{E}^{2}\phi' \rangle^{1/2} =\langle \mathcal{E}^{2}\phi,\phi\rangle^{1/2}  \\
& \le \langle \mathcal{E}\phi ,\phi \rangle^{1/2} =(2\mathcal{H}(\mu,\nu))^{1/2}.
\end{split}
\]

Tracing back the inequalities we get that equality is attained for the case of $|g'-\lambda|\equiv1$ ($\alpha$-almost surely), $\mathcal{E}^{2}\phi'=c (g'-\lambda)$ for some constant $c\ge0$ and $\phi$ must be in the eigenspace of eigenvalue $1$ for $\mathcal{E}$, which means $\phi(x)=a\cos(x)+b\sin(x)$ where we exclude the trivial case $a=b=0$.   From this we deduce that $c(g'-\lambda)=\mathcal{E}^{2}\phi'=\phi'$ which is impossible unless $a=b=0$ which we precluded.  Thus \eqref{e:pi:1} does not have an equality case.   The rest follows.  \qedhere
\end{proof}

Despite the fact the equality is not attained, the inequality might still be sharp, though an approximate sequence for which this happens is not clear to the author.

To move next with the Log-Sobolev inequality we need to find a replacement of the free information inequality, something similar to $\mathcal{H}(\mu,\nu)$ from \eqref{e10:H}.  To do this, just notice that the free relative Fisher information is
\[
I_{Q}(\mu)=\int(H\mu-Q')^{2}d\mu-\left(\int Q' d\mu \right)^{2}
\]
and if we recall that (at least formally) we have that 
\[
Q'=H\mu_{Q},
\]
then we can rewrite the above in the from
\[
I_{Q}(\mu)=\int(H(\mu-\mu_{Q}))^{2}d\mu-\left(\int H(\mu-\mu_{Q}) d\mu \right)^{2}
\]
where we used the fact that $\int H\mu d\mu=0$.  If we want to make this quantity into one which is potential independent and also symmetric in $\mu$ and $\mu_{Q}$ one natural suggestion is to replace the integration from the integration with respect to the measure $\mu$ into an integration with respect to another measure which is independent of both $\mu$ and $\mu_{Q}$.  Such a natural measure we can pick is the Haar measure  $\alpha$.  Moreover, for measures of the form $d\mu=\phi d\alpha$ with $\phi$ a smooth function we can argue that 
\[
H\mu=(\mathcal{E}\phi)'. 
\]
Furthermore, since for $n\ne 0$, $\mathcal{E}z^{n}=\frac{1}{|n|}z^{n}$, we get that $(H\mu)(z)=i\mathrm{sign}(n)z^{n}$.  This shows that for any smooth, real valued function $\phi$, 
\[
\int (H\phi)^{2}d\alpha=\int \phi^{2}d\alpha-\left(\int\phi d\alpha\right)^{2}.  
\]
Consequently we define for two probability measures $\mu,\nu$ on $\T$
\begin{equation}\label{e10:I}
\mathcal{I}(\mu,\nu)=\begin{cases}
\int \left(\frac{d\mu}{d\alpha}-\frac{d\nu}{d\alpha} \right)^{2}d\alpha & \text{ if } \frac{d\mu}{d\alpha},\frac{d\nu}{d\alpha}\in L^{2}(\alpha),  \\
+\infty & \text{ otherwise}.
\end{cases}
\end{equation}
Notice that we do not subtract the natural quantity $\int \left(\frac{d\mu}{d\alpha}-\frac{d\nu}{d\alpha}\right)d\alpha$ since this is already $0$ in the case both $\mu$ and $\nu$ are absolutely continuous with densities in $L^{2}(\alpha)$.   

With this definition we have the Log-Sobolev inequality as follows. 
\begin{theorem}
For any measures $\mu,\nu$, we have that 
\begin{equation}\label{e10:LSI}
\mathcal{H}(\mu,\nu)\le \frac{1}{2}\mathcal{I}(\mu,\nu). 
\end{equation}
Equality is attained for cases of $\mu(dz)-\nu(dz)=(a\Re(z)+b\Im(z))\alpha(dz)$ for some $a,b\in\R$.   
\end{theorem}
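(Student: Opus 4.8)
The plan is to reduce to signed measures with smooth density with respect to $\alpha$, rewrite both sides as quadratic forms in the operator $\mathcal{E}$ of Definition~\ref{d:1}, and then read off the inequality from the spectrum of $\mathcal{E}$. First I would dispose of the trivial case: if $\mu$ or $\nu$ fails to be absolutely continuous with an $L^{2}(\alpha)$-density, then $\mathcal{I}(\mu,\nu)=+\infty$ and there is nothing to prove. Hence, by an approximation argument in the same spirit as the one in the transportation theorem above, it suffices to treat $\mu-\nu=\phi\,\alpha$ with $\phi\colon\T\to\R$ smooth; since $\mu,\nu$ are probability measures, $\int\phi\,d\alpha=\mu(\T)-\nu(\T)=0$, so $\phi\in L^{2}_{0}(\alpha)$.

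For such $\phi$, the computation already carried out in the proof of the potential-independent transportation inequality identifies the left side as $\mathcal{H}(\mu,\nu)=-\iint\log|z-w|\,\phi(z)\phi(w)\,\alpha(dz)\,\alpha(dw)=\langle\mathcal{E}\phi,\phi\rangle$, while on the right side, since $\phi=\tfrac{d\mu}{d\alpha}-\tfrac{d\nu}{d\alpha}$, Parseval's identity in the orthonormal basis $\{z^{n}\}_{n\in\Z}$ gives $\mathcal{I}(\mu,\nu)=\int\phi^{2}\,d\alpha=\langle\phi,\phi\rangle$. So \eqref{e10:LSI} is exactly the statement $\mathcal{E}\le I$ on $L^{2}_{0}(\alpha)$. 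This is immediate from Proposition~\ref{p:1}, by which $\mathcal{E}$ acts on the basis through $\mathcal{E}1=0$ and $\mathcal{E}z^{\pm n}=\tfrac1n z^{\pm n}$ for $n\ge1$, so its spectrum restricted to $L^{2}_{0}(\alpha)$ is $\{1,\tfrac12,\tfrac13,\dots\}\subset(0,1]$.

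For the equality statement, $\langle\mathcal{E}\phi,\phi\rangle=\langle\phi,\phi\rangle$ with $\phi\in L^{2}_{0}(\alpha)$ forces $\phi$ to lie in the eigenspace of $\mathcal{E}$ for the top eigenvalue $1$, which is $\mathrm{span}\{z,z^{-1}\}$; a real element there has the form $\phi=a\,\Re z+b\,\Im z$ with $a,b\in\R$, i.e. the stated extremal configuration $\mu-\nu=(a\,\Re z+b\,\Im z)\,\alpha$, and conversely for such $\phi$ both sides coincide. I expect the only real obstacle to be the approximation step of the first paragraph: one must approximate arbitrary probability measures $\mu,\nu$ by ones with smooth densities so that both $\mathcal{H}$ and $\mathcal{I}$ pass to the limit. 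Since $\mathcal{H}$ is a lower semicontinuous quadratic functional of $\mu-\nu$ and $\mathcal{I}$ is, when finite, the squared $L^{2}(\alpha)$-norm of a density, this is routine and parallels the corresponding reduction in the transportation theorem.
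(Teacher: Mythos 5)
Your proposal is correct and follows essentially the same route as the paper: reduce by approximation to $\mu-\nu=\phi\,\alpha$ with $\phi$ smooth and mean zero, identify $\mathcal{H}$ with the quadratic form of $\mathcal{E}$ and $\mathcal{I}$ with $\langle\phi,\phi\rangle$, and conclude from the spectrum $\{0,1,\tfrac12,\tfrac13,\dots\}$ of $\mathcal{E}$. Your treatment of the equality case via the top eigenspace $\mathrm{span}\{z,z^{-1}\}$ simply makes explicit what the paper dismisses as clear.
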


\begin{proof}  By standard approximation results, we can reduce the proof to the case of $\mu-\nu =\phi d\alpha$ where $\phi$ is a smooth function.  Using the definition of $\mathcal{E}$, what we need to show is that 
\[
\langle \mathcal{E}\phi ,\phi\rangle \le \langle \phi,\phi\rangle
\]
which immediately follows from the fact that the spectrum of $\mathcal{E}$ is $0,1,1/2,1/3,\dots$.   Equality is also clear.
\end{proof}

In a similar vein we can also prove the HWI inequality which takes the following form.  

\begin{theorem}
For any measures $\mu,\nu$ on $\T$, 
\[
\mathcal{H}(\mu,\nu)\le \sqrt{\mathcal{I}(\mu,\nu)}\mathcal{W}_{1}(\mu,\nu)-\frac{1}{2}\mathcal{W}_{1}^{2}(\mu,\nu).
\]
\end{theorem}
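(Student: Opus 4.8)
The plan is to run, at the level of smooth densities, the same circle of ideas used for the potential‑independent transportation and Log‑Sobolev inequalities of this section. By the approximation argument employed there it suffices to take $\mu-\nu=\phi\,\alpha$ with $\phi$ smooth, real‑valued and of mean zero. For such $\phi$ one has, as used in the proofs of those two inequalities, $\mathcal{I}(\mu,\nu)=\langle\phi,\phi\rangle$ and $\mathcal{H}(\mu,\nu)=\langle\mathcal{E}\phi,\phi\rangle$; moreover, exactly as in the proof of the transportation inequality, the dual formula \eqref{e:t21} together with \eqref{e:pe:0} and the fact that $\mathcal{E}$ commutes with differentiation (so that $\langle g,\phi\rangle=\langle g'-\lambda,\mathcal{E}^{2}\phi'\rangle$ for every periodic $g$ and every $\lambda\in\R$) gives the representation
\[
\mathcal{W}_{1}(\mu,\nu)=\sup\Big\{\langle g'-\lambda,\mathcal{E}^{2}\phi'\rangle:\ \lambda\in\R,\ |g'-\lambda|\le1\ \alpha\text{-a.e.}\Big\}=\big\|\mathcal{E}^{2}\phi'\big\|_{L^{1}(\alpha)} .
\]
I would then record the spectral picture from Proposition~\ref{p:1}: on $L^{2}_{0}(\alpha)$ the operator $\mathcal{E}$ is self‑adjoint with $\mathcal{E}z^{\pm n}=\frac1n z^{\pm n}$, so $0<\mathcal{E}\le I$ and $\mathcal{I}(\mu,\nu)-\mathcal{H}(\mu,\nu)=\langle(I-\mathcal{E})\phi,\phi\rangle=\big\|(I-\mathcal{E})^{1/2}\phi\big\|_{L^{2}(\alpha)}^{2}$.

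Next I would rewrite the target. Since $2\sqrt{\mathcal{I}}\,\mathcal{W}_{1}-\mathcal{W}_{1}^{2}=\mathcal{I}-(\sqrt{\mathcal{I}}-\mathcal{W}_{1})^{2}$ and since the transportation and Log‑Sobolev inequalities of this section already give $0\le\mathcal{W}_{1}(\mu,\nu)\le\sqrt{\mathcal{H}(\mu,\nu)}\le\sqrt{\mathcal{I}(\mu,\nu)}$, the inequality to be proved is equivalent to the \emph{lower} bound
\[
\sqrt{\mathcal{I}(\mu,\nu)}-\mathcal{W}_{1}(\mu,\nu)\ \le\ \big\|(I-\mathcal{E})^{1/2}\phi\big\|_{L^{2}(\alpha)} .
\]
To produce it I would imitate the Cauchy–Schwarz step in the proof of the potential‑dependent $HWI$ inequality: there one subtracts from $H\mu-V'$ its mean and applies Cauchy–Schwarz against the transport displacement; here the role of the displacement is played by a near‑optimal competitor $g'-\lambda$ in the supremum above (which may be taken extremal, $g'-\lambda=\operatorname{sgn}(\mathcal{E}^{2}\phi')$), and the role of the ``information density'' by $\phi$ itself. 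Pairing $g'-\lambda$ with $\phi$, splitting $\phi$ according to the spectral resolution of $\mathcal{E}$ into a piece on which $\mathcal{E}$ is close to the identity and a remainder, estimating the first piece by $\langle g'-\lambda,\mathcal{E}^{2}\phi'\rangle=\mathcal{W}_{1}$ and the remainder by $\|(I-\mathcal{E})^{1/2}\phi\|$ via Cauchy–Schwarz, yields the displayed bound, and then the theorem follows by the completed‑square rearrangement.

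The main obstacle is exactly this matching step. The quantity $\mathcal{W}_{1}$ is an $L^{1}(\alpha)$ object, the norm of $\mathcal{E}^{2}\phi'$, while $\mathcal{H}$ and $\mathcal{I}$ are $L^{2}(\alpha)$ quadratic forms, so the Cauchy–Schwarz must be arranged on the circle so that the $L^{1}$‑extremizer $\operatorname{sgn}(\mathcal{E}^{2}\phi')$ is correctly played off against the spectral decomposition of $\phi$; tracking the numerical constant in this trade‑off — which is where the factor $2$ in front of $\sqrt{\mathcal{I}}\,\mathcal{W}_{1}$ enters, and which is presumably not sharp — is the delicate point. Once the lower bound for $\mathcal{W}_{1}$ is in hand, the equality discussion is routine: tracing the equalities in the Cauchy–Schwarz and in $\mathcal{E}\le I$ forces $\phi$ to be a first harmonic, i.e.\ $\mu-\nu=(a\Re z+b\Im z)\,\alpha$, as asserted.
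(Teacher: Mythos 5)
Your reductions coincide with the paper's: restrict to $\mu-\nu=\phi\,\alpha$ with $\phi$ smooth and of mean zero, identify $\mathcal{I}(\mu,\nu)$ and $\mathcal{H}(\mu,\nu)$ with $\langle\phi,\phi\rangle$ and $\langle\mathcal{E}\phi,\phi\rangle$, use the dual formula \eqref{e:t21} together with \eqref{e:pe:0}, and complete the square (using $\mathcal{W}_{1}\le\sqrt{\mathcal{H}}\le\sqrt{\mathcal{I}}$ from the two preceding theorems of the section) so that everything reduces to the lower bound $\sqrt{\langle\phi,\phi\rangle}-\sqrt{\langle\phi,\phi\rangle-\langle\mathcal{E}\phi,\phi\rangle}\le\mathcal{W}_{1}(\mu,\nu)$. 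Up to that point the proposal is fine, and the identity $\mathcal{W}_{1}(\mu,\nu)=\|\mathcal{E}^{2}\phi'\|_{L^{1}(\alpha)}$ you extract from the dual formulation is a correct (and clarifying) observation.

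The gap is that the crucial step, the proof of this lower bound, is never carried out. You describe a plan — pair the $L^{1}$ extremizer $g'-\lambda=\mathrm{sign}(\mathcal{E}^{2}\phi')$ against a spectral splitting of $\phi$, bound one piece by $\mathcal{W}_{1}$ and the remainder by $\|(I-\mathcal{E})^{1/2}\phi\|_{L^{2}(\alpha)}$ — but you give no argument that such a pairing produces the $L^{2}$ quantity $\|\phi\|_{L^{2}(\alpha)}$ on the left, and you yourself flag the $L^{1}$-versus-$L^{2}$ mismatch as ``the delicate point'' without resolving it; since this bound \emph{is} the theorem (all the preceding steps are bookkeeping), the proposal is incomplete. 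The paper closes this step by a different mechanism: it does not use the sign extremizer at all, but, after rescaling $\phi$ so that $|\mathcal{E}^{2}\phi'|\le1$ (legitimate, both sides being homogeneous of degree two in $\phi$), it tests the supremum with the single smooth admissible pair $\lambda=0$, $g=\mathcal{E}^{2}\phi$, which gives $\mathcal{W}_{1}(\mu,\nu)\ge\langle\mathcal{E}^{2}\phi,\phi\rangle$, and then it proves the purely spectral Fourier inequality
\[
\sqrt{\sum_{n\ne0}a_{n}^{2}}\;\le\;\sqrt{\sum_{n\ne0}\Bigl(1-\tfrac{1}{|n|}\Bigr)a_{n}^{2}}\;+\;\sqrt{\sum_{n\ne0}\frac{a_{n}^{2}}{n^{2}}}
\]
for the coefficients of $\phi$, by squaring and one application of Cauchy--Schwarz. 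That explicit choice of test function plus this elementary coefficient inequality is the missing ingredient your sketch gestures at but does not supply. (Also, the equality discussion you append is not part of this statement; the equality claim in the paper belongs to the Log-Sobolev theorem of this section.)
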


\begin{proof}  For the proof, we just need to show it for measures $\mu,\nu$ which are absolutely continuous with respect to $\alpha$ and in addition, the densities are also smooth.  By a simple scaling argument it suffices to do it for $\phi$ such that $|\mathcal{E}^{2}\phi'|\le1$.  

The HWI inequality becomes equivalent to 
\[
\sqrt{\langle \phi,\phi \rangle}-\sqrt{\langle \phi,\phi \rangle - \langle \mathcal{E}\phi,\phi \rangle}\le \sup_{\lambda,g} \left\{ \int g\phi\, d\alpha: |g(x)-g(y)-\lambda(x-y)|\le |x-y| \right\} \le \sqrt{\langle \phi,\phi \rangle}+\sqrt{\langle \phi,\phi \rangle - \langle \mathcal{E}\phi,\phi \rangle}.
\]
The right hand side inequality is a simple consequence of the transportation and Log-Sobolev inequalities.   To show the left hand inequality it suffices to take $\lambda=0$ and $g=\mathcal{E}^{2}\phi$.  In particular, the supremum is bounded below by $\langle \mathcal{E}^{2}\phi,\phi \rangle$, thus all we need is to show that 
\[
\sqrt{\langle \phi,\phi \rangle}-\sqrt{\langle \phi,\phi \rangle - \langle \mathcal{E}\phi,\phi \rangle}\le  \sqrt{\langle \mathcal{E}^{2}\phi,\phi \rangle}.
\] 
Writing $\phi(z)=\sum_{n\ne 0}a_{n}z^{n}$, our task is now becomes
\[
\sqrt{\sum_{n\ne0}a_{n}^{2}}\le \sqrt{\sum_{n\ne0}(1-1/|n|)a_{n}^{2}}+ \sqrt{\sum_{n\ne0}a_{n}^{2}/n^{2}}.
\]
Squaring and simplifying reduces it to  
\[
\sum_{n\ne0}\frac{1}{|n|}\left(1-\frac{1}{|n|}\right)a_{n}^{2}\le 4\left(\sum_{n\ne0}\frac{1}{n^{2}}a_{n}^{2}\right)\left(\sum_{n\ne0}\left(1-\frac{1}{|n|}\right)a_{n}^{2}\right) 
\]
which follows simply from Cauchy inequality and the fact that $(1-1/|n|)\le 1$. \qedhere 

\end{proof}

\section{Back to the classical case of functional inequalities}\label{S:11}

The free case on the circle opens up an interesting question in the frame of classical functional inequalities.  To set up the scene, assume that $M$ is a Riemannian manifold and $G$ a Lie group acting on $M$ with $\mu$ a probability measure on $M$, say of the form $\mu(dx)=e^{-V(x)}dx$, where $dx$ is the standard volume measure on $M$.   Assume that $V$ satisfies the typical Bakry-Emery condition 
\[
Hess V+ Ric\ge \rho I,
\]
for some positive $\rho$.  Then all the classical inequalities, as for instance, the transportation, Log-Sobolev, HWI and the weakest of them, the Poincar\'e inequality as described in the introduction all hold true.   

In this setup we can ask the following natural question.  

\emph{Are there versions of these inequalities which take into account how far the measure $\mu$ is from being invariant under the group action?}  

For example we can ask about the simplest of them, namely the Poincar\'e inequality.  One possibility is the following: 
\begin{equation}
\rho\Var_{\mu}(\phi)\le \int |\nabla \phi|^{2}d\mu - \left|\int \nabla_{G} \phi d\mu \right|_{\mathfrak{g}}^{2}
\end{equation}
where $(\nabla_{G}\phi)(x)$ is the element in the Lie algebra $\mathfrak{g}$ such that 
\[
\langle (\nabla_{G}\phi)(x),g \rangle=\frac{d}{dt}\big|_{t=0}\phi(\exp(tg)x)
\]
where $\exp(g)$ is the exponential map from $\mathfrak{g}$ into $G$.   We need to endow here $\mathfrak{g}$ with an appropriate metric which then is extended to an invariant metric on $G$.    Obviously if the measure $\mu$ is invariant under the action of the group, then $\nabla_{G}\phi=0$ and thus we basically fall back into the classical case.  

It would be interesting to see the transportation inequality and similarly Log-Sobolev in this framework.  In particular, it would be nice to see a form of the Wasserstein distance which incorporates the action of the group $G$ on the manifold $M$.

\section{Thanks}

I owe a lot to the anonymous reviewer of the submitted paper for careful reading and comments which substantially improved this paper.  

I want to thank Michael Loss for useful discussions and the idea of using Fourier expansion in the proof of Proposition \ref{p:6}. 

Many thanks to Haussdorf Institute in Bonn for its hospitality, great and stimulating environment during the trimester of mass transportation in 2015 where part of this paper was expanded and improved.

\providecommand{\bysame}{\leavevmode\hbox to3em{\hrulefill}\thinspace}
\providecommand{\MR}{\relax\ifhmode\unskip\space\fi MR }
\providecommand{\MRhref}[2]{%
  \href{http://www.ams.org/mathscinet-getitem?mr=#1}{#2}
}
\providecommand{\href}[2]{#2}

\end{document}